\renewcommand{\leq}{\leqslant}
\renewcommand{\le}{\leqslant}
\renewcommand{\geq}{\geqslant}
\renewcommand{\ge}{\geqslant}
\numberwithin{equation}{section}
\theoremstyle{plain}
\theoremstyle{plain}
\newtheorem{thm}{Theorem}[section]
\newtheorem{lem}[thm]{Lemma}
\newtheorem{cor}[thm]{Corollary}
\newtheorem{prop}[thm]{Proposition}
\newtheorem{rem}[thm]{Remark}
\newcommand{\R}{\mathbb{R}}
\newcommand{\D}{\mathcal{D}_\vartheta}
\newcommand{\e}{\varepsilon}
\begin{document}

\title[Nonlocal capillarity for anisotropic kernels]{Nonlocal capillarity for anisotropic kernels}\thanks{
{\em Alessandra De Luca}: 
Dipartimento di Matematica e Applicazioni, Universit\`a di Milano - Bicocca, Via Cozzi 55, 20125 Milano, Italy.
{\tt a.deluca41@campus.unimib.it}\\
{\em Serena Dipierro}:
Department of Mathematics
and Statistics,
University of Western Australia,
35 Stirling Hwy, Crawley WA 6009, Australia.
{\tt serena.dipierro@uwa.edu.au}
\\
{\em Enrico Valdinoci}:
Department of Mathematics
and Statistics,
University of Western Australia,
35 Stirling Hwy, Crawley WA 6009, Australia. {\tt enrico.valdinoci@uwa.edu.au}\\
The first author is member of INdAM.
The second and third authors
are members of AustMS.
The second author is supported by
the Australian Research Council DECRA DE180100957
``PDEs, free boundaries and applications''. 
The third author is supported by
the Australian Laureate Fellowship
FL190100081
``Minimal surfaces, free boundaries and partial differential equations''.
Part of this
work was carried
out during a very pleasant and fruitful visit of the first author to the
University of Western Australia, which we thank for the warm hospitality.}

\author{Alessandra De Luca}
\author{Serena Dipierro}
\author{Enrico Valdinoci}

\keywords{Nonlocal perimeter functional, interaction kernel,
capillarity theory, contact angle, Young's law}
\subjclass[2020]{35R11, 49Q05, 76B45, 58E12}

\begin{abstract}
We study a nonlocal capillarity problem
with
interaction kernels that are possibly anisotropic
and not necessarily invariant under scaling.
In particular, the lack of scale invariance will be modeled
via two different fractional exponents~$s_1, s_2\in (0,1)$
which take into account the possibility that
the container and the environment present different features with
respect to particle interactions.

We determine a nonlocal Young's law for the contact angle and discuss the unique solvability
of the corresponding equation in terms of the interaction kernels and of the relative adhesion coeﬃcient.
\end{abstract}

\maketitle

\tableofcontents

\section{Introduction and main results}

In the classical capillarity theory (see e.g.~\cite{MR3918991, ASDfMRDGB})
the contact angle is defined as the angle~$\vartheta$ at which a liquid interface meets a solid surface.
At the equilibrium, this angle is expressed by the Young's law
in terms of the relative adhesion coeﬃcient~$\sigma$ via the classical formula
$$ \cos(\pi-\vartheta)=\sigma.$$
The contact angle plays also an important role in the
fluid spreading on a solid surface, determining also the velocity of the moving contact lines
(see e.g.~\cite{MR1051327} and the references therein).

The contact angle is certainly the ``macroscopic''
outcome of several complex ``microscopic'' phenomena, involving physical chemistry,
statistical physics and fluid dynamics,
and ultimately relying on the effect of long-range and 
distance-dependent interactions between atoms or molecules
(such as van der Waals forces). It is therefore
of great interest to understand how the interplay between different microscopic
effects generates an effective contact angle at a macroscopic scale,
and to detect the proximal regions of the interfaces (likely, at a very small distance
from the contact line)
in which the effect of the singular long-range potentials may produce a significant effect,
see e.g.~\cite{MR556259, DUSSANV1983303}.\medskip

To further understand the role of long-range particle interactions in models
related to capillarity theory,
a modification of the classical Gau{\ss} free energy functional 
has been introduced in~\cite{MR3717439} that took into account
surface tension energies of nonlocal type and modeled on the fractional perimeter
presented in~\cite{MR2675483}.
These new variational principles lead to suitable equilibrium conditions that determine
a specific contact angle depending on the relative adhesion coeﬃcient
and on the properties of the interaction kernel.
The classical limit angle was then obtained
from this long-range prescription via a limit procedure, and precise asymptotics
have been provided in~\cite{MR3707346}.
Local minimizers in the fractional capillarity model have been studied in~\cite{REV}, 
where their blow-up limits at boundary points have been considered, showing, by means of
a new monotonicity formula, that these blow-up limits are cones, and giving a complete
characterization of such cones in the planar case.
\medskip

The main goal of this paper is to present a capillarity theory of nonlocal
type in which the long-range particle interactions are
possibly anisotropic
and not necessarily invariant under scaling.
This setting is specifically motivated by the case
in which 
the potential interactions of the droplet with
the container and those with the environment
are subject to different van der Waals forces.
These two different interactions will be modeled here by
two different fractional exponents.
In this setting, we determine a nonlocal Young's law for the contact angle,
which extends the known one in the nonlocal isotropic setting and
recovers the classical one as a limit case.

We now discuss in further detail the type of particle interactions that we take into account
and the variational structure of the corresponding anisotropic nonlocal capillarity theory.

\subsection{Interaction kernels}
Owing to~\cite{MR2675483}, the most widely studied interaction kernel of singular type
in problems related to nonlocal surface tension is
\begin{equation}\label{fractkernel}
K_s(\zeta):=\frac{1}{|\zeta|^{n+s}} \quad \text{for all $\zeta\in \mathbb{R}^n\setminus\{0\}$},
\end{equation}
with~$s\in (0,1)$.
Here, we aim at considering more general kernels than the one in~\eqref{fractkernel},
with a twofold objective: on the one hand, we wish to initiate and consolidate
a nonlocal capillarity theory in an {\em anisotropic} scenario; on the other hand,
we want to also model the case in which the particle interaction
of the container has a {\em different structure} with respect to the one
of the external environment.

The first of these two goals will be pursued by considering interaction kernels that are {\em not
necessarily invariant under rotation}, the second by taking into account {\em interactions
with different homogeneity} inside the container and in the external environment.\medskip

More specifically, the mathematical setting in which we work is the following.
Given~$n\geq 2$, $s\in (0,1)$, $\lambda\geq 1$ and $\varrho\in (0,\infty]$,
we consider the family of interaction kernels, denoted by $\mathbf{K}(n,s,\lambda,\varrho)$,
containing the even functions~$K\colon \R^n\setminus \{0\}\to [0,+\infty) $
such that, for all $\zeta\in\R^n\setminus\{0\}$,
\begin{equation}\label{stimakernel}
\frac{\chi_{B_\varrho}(\zeta)}{\lambda|\zeta|^{n+s}}\leq K(\zeta)\leq \frac{\lambda}{|\zeta|^{n+s}}. 
\end{equation}
Here, we are using the notation~$B_\varrho=\R^n$ when~$\varrho=\infty$.
Also, for every $h\in\mathbb{N}$, we consider the class $\mathbf{K}^h(n,s,\lambda,\varrho)$ of all the kernels $K\in \mathbf{K}(n,s,\lambda,\varrho)\cap C^h(\R^n\setminus \{0\})$ such that, for all $\zeta\in\R^n\setminus\{0\}$,
\begin{equation}\label{stimaderivatakernel}
|D^jK(\zeta)|\leq \frac{\lambda}{|\zeta|^{n+s+j}}\qquad{\mbox{for all }} 1\leq j\leq h.
\end{equation}
We also say that the kernel~$K$ 
admits a blow-up limit if
for every~$\zeta\in\R^n\setminus\{0\}$ the following limit exists:
\begin{equation}\label{Kast}
K^\ast(\zeta):=\lim_{r\rightarrow 0^+} r^{n+s} K(r\zeta).
\end{equation}

For each kernel $K$ we consider the interaction
induced by~$K$ between any two disjoint (measurable)
subsets $E,F$ of $\mathbb{R}^n$ defined by
\begin{equation*}
I_K(E,F):=\int_E\int_F K(x-y)\,dx\,dy.
\end{equation*}
For instance, with this definition,
the so-called $K$-nonlocal perimeter of a set~$E$
associated to $K$ is given by the quantity~$I_K(E,E^c)$,
which is
the interaction of the set $E$ with its complement with respect to $\mathbb{R}^n$
(here, as usual, we use the notation~$E^c:=\R^n\setminus E$). See~\cite{MR3981295}
for several results on the $K$-nonlocal perimeter.
In particular, if $K$ is the fractional kernel in~\eqref{fractkernel}, then
the notion of $K$-perimeter boils down to the one introduced by Caffarelli, Roquejoffre and Savin in~\cite{MR2675483}.
\medskip

Given an open set $\Omega\subseteq \mathbb{R}^n$,
$s_1$, $s_2\in(0,1)$ and~$\sigma\in \R$,
for every~$K_1\in
\mathbf{K}(n,s_1,\lambda,\varrho)$ and~$K_2\in
\mathbf{K}(n,s_2,\lambda,\varrho)$
and every set~$ E\subseteq\Omega$ we define the functional
\begin{equation}\label{energy}
\mathcal{E}(E):=I_1(E,E^c\cap\Omega)+\sigma\, I_2(E,\Omega^c).
\end{equation}
Here above and in what follows, we use\footnote{We observe that when~$\sigma>0$, one could reabsorb it
into the second interaction kernel
up to redefining~$K_2$ into~$\sigma K_2$. In general, one can think that~$\sigma$ ``simply plays the role of a sign'',
say it suffices to understand the matter for~$\sigma\in\{-1,+1\}$, up to changing~$K_2$ into~$|\sigma|\,K_2$:
indeed, if~$\widetilde{K}_2:=|\sigma|\,K_2$ we have that
$$ \sigma \,I_{K_2}(E,\Omega^c)={\rm sign}(\sigma)\,|\sigma|
\int_E\int_{\Omega^c} K_2(x-y)\,dx\,dy={\rm sign}(\sigma) \, I_{\widetilde K_2}(E,\Omega^c).
$$
However, we thought it was convenient to consider~$\sigma$ as an ``independent parameter'',
since this makes it easier to compare with the classical case.}
the short notation~$I_1:=I_{K_1}$
and~$I_2:=I_{K_2}$. 
Moreover, given a function~$g\in L^\infty(\Omega)$, 
we let
\begin{equation}\label{variationalproblem}
{\mathcal{C}}(E):=
\mathcal{E}(E)+\int_{E}g(x)\,dx
.\end{equation}
The setting that we take into account is general enough to include anisotropic
nonlocal perimeter functionals as in~\cite{MR3161386, MR3981295},
which, in turn, can be seen as nonlocal modifications of the classical anisotropic
perimeter functional. In this spirit, the functional in~\eqref{variationalproblem}
can be seen as a nonlocal generalization of classical anisotropic capillarity problems,
such as the ones in~\cite{MR3317808}.
As customary in the analysis of nonlocal problems arising from geometric functionals,
the long-range interactions involved in~\eqref{variationalproblem} produce
significant energy contributions which will give rise to structural differences
with respect to the classical case.
\medskip

The goal of this article is to study the minimizers
of the nonlocal capillarity functional~${\mathcal{C}}$ among all the sets~$E$ with a given volume.

The case in which~$K_1(\zeta)=K_2(\zeta)=K_s(\zeta)$
as in~\eqref{fractkernel} has been studied in~\cite{MR3717439, MR3707346, REV}.
Here instead we are specifically interested in the
nonlocal capillarity energy in~\eqref{variationalproblem}
with two different types of interactions between~$E$ and~$\Omega\cap E^c$ and between~$E$
and~$\Omega^c$, as modeled in~\eqref{energy}.

\subsection{Preliminary results: existence theory and Euler-Lagrange equation}

We now describe some basic features of the capillarity energy
functional~${\mathcal{C}}$ in~\eqref{variationalproblem}.
First of all, we have that the volume constrained minimization of this functional 
is well-posed, according to the following statement:

\begin{prop}[Existence of minimizers]\label{existenceofminimizer}
Let~$K_1\in \mathbf{K}(n,s_1,\lambda,\varrho)$ and~$K_2\in \mathbf{K}(n,s_2,\lambda,\varrho)$.
Let~$\Omega$ be an open and bounded set with~$I_1(\Omega,\Omega^c)+I_2(\Omega,\Omega^c)<+\infty$.

Let $m\in (0,|\Omega|)$ and $g\in L^\infty(\Omega)$.

Then, there exists a minimizer for the functional~${\mathcal{C}}$ in~\eqref{variationalproblem}
among all the sets~$E$ with Lebesgue measure equal to~$m$.

Moreover, $I_{1}(E,E^c\cap \Omega)<+\infty$ for every minimizer $E$.
\end{prop}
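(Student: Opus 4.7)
The plan is to apply the direct method of the calculus of variations. The argument splits into three logical stages: construction of an admissible competitor of finite energy, extraction of an $L^1$-convergent subsequence from a minimizing sequence, and verification of lower semicontinuity of each term in $\mathcal{C}$.

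First, since $\Omega$ is open with $|\Omega|>m$, I would select a set $U\subseteq\Omega$ of measure $m$ with smooth boundary, for instance a finite disjoint union of balls contained in $\Omega$, one of which is scaled so that the total volume equals $m$. Using the classical bound on the fractional $s_1$-perimeter of such $U$ together with the upper bound in \eqref{stimakernel}, $I_1(U,U^c\cap\Omega)\leq\lambda\,I_{K_{s_1}}(U,U^c)<+\infty$. The remaining terms satisfy $|\sigma\,I_2(U,\Omega^c)|\leq|\sigma|\,I_2(\Omega,\Omega^c)<+\infty$ by hypothesis and $\bigl|\int_U g\bigr|\leq\|g\|_{L^\infty}m$; a matching lower bound $\mathcal{C}(E)\geq-|\sigma|\,I_2(\Omega,\Omega^c)-\|g\|_{L^\infty}m$ holds for every admissible $E$, so $\inf\mathcal{C}\in\mathbb{R}$.

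Next, fix a minimizing sequence $\{E_k\}$. The above estimates yield $I_1(E_k,E_k^c\cap\Omega)\leq C$ for some constant $C>0$. Setting $u_k:=\chi_{E_k}$ and using $|u_k(x)-u_k(y)|^2=|u_k(x)-u_k(y)|$, the lower bound in \eqref{stimakernel} gives
\[
\int_\Omega\int_\Omega\frac{|u_k(x)-u_k(y)|^2}{|x-y|^{n+s_1}}\,\chi_{B_\varrho}(x-y)\,dx\,dy\leq 2\lambda C.
\]
Contributions from $|x-y|>\varrho$ (only relevant when $\varrho<\infty$) are uniformly controlled by $|\Omega|^2\varrho^{-n-s_1}$, so $\{u_k\}$ is bounded in the fractional Sobolev space $H^{s_1/2}(\Omega)$. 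Fractional Rellich-Kondrachov compactness then extracts a subsequence $u_{k_j}$ converging in $L^1(\Omega)$ and a.e.\ to some limit $u$; since $u_{k_j}\in\{0,1\}$ a.e., we have $u=\chi_E$ for a measurable $E\subseteq\Omega$, and $|E|=m$ by $L^1$ convergence. For the lower semicontinuity, Fatou's lemma applied to the pointwise convergent nonnegative integrand $\chi_{E_{k_j}}(x)\,\chi_{E_{k_j}^c\cap\Omega}(y)\,K_1(x-y)$ yields $I_1(E,E^c\cap\Omega)\leq\liminf_j I_1(E_{k_j},E_{k_j}^c\cap\Omega)$. The adhesion term is in fact continuous: the function $W(x):=\int_{\Omega^c}K_2(x-y)\,dy$ belongs to $L^1(\Omega)$ (with integral $I_2(\Omega,\Omega^c)<+\infty$), so $I_2(E_{k_j},\Omega^c)=\int_\Omega\chi_{E_{k_j}}W$ converges to $I_2(E,\Omega^c)$ by dominated convergence, and $\int_{E_{k_j}}g\to\int_E g$ since $g\in L^\infty(\Omega)$. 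Thus $\mathcal{C}(E)\leq\liminf_j\mathcal{C}(E_{k_j})=\inf\mathcal{C}$, and $E$ is a minimizer. The final claim is immediate: for any minimizer, finiteness of $\mathcal{C}(E)$ together with the uniform bounds on the adhesion and gravitational terms forces $I_1(E,E^c\cap\Omega)<+\infty$.

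The main delicate point I anticipate is the compactness step: verifying that the partial lower bound in \eqref{stimakernel}, valid only on $B_\varrho$, still provides enough control to extract an $L^1$-convergent subsequence of characteristic functions on $\Omega$. This is not a serious obstacle, since the far-field contribution is estimated trivially using boundedness of $\Omega$, but it is the one place where the specific structure of \eqref{stimakernel} plays a decisive role; without a comparable lower bound on $K_1$, characteristic functions of sets with bounded $K_1$-perimeter would not enjoy any Sobolev-type compactness, and the direct method would break down.
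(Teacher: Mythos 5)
Your proof is correct and uses the same overall direct-method scaffold as the paper's Appendix~A, but it handles the two key technical steps a bit differently, and in one place more cleanly. For the semicontinuity of the adhesion term, the paper isolates the difficulty when $\sigma<0$ in Lemma~\ref{lowersemic} by algebraically rewriting $\mathcal{E}(E)$ as a sum of three nonnegative interactions minus the constant $(|\sigma|+1)I_2(\Omega,\Omega^c)$, and then applies Fatou; you instead observe that $W(x):=\int_{\Omega^c}K_2(x-y)\,dy$ is a fixed function in $L^1(\Omega)$ (by Tonelli and the hypothesis $I_2(\Omega,\Omega^c)<+\infty$), so $\sigma\,I_2(\cdot,\Omega^c)=\sigma\int_\Omega\chi_{(\cdot)}W$ is in fact \emph{continuous} along $L^1$-convergent sequences of subsets of $\Omega$, regardless of the sign of $\sigma$. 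This sidesteps the algebraic manipulation entirely and is arguably the more transparent argument. For compactness, the paper first controls the full nonlocal perimeter $I_1(E_j,E_j^c)=I_1(E_j,E_j^c\cap\Omega)+I_1(E_j,\Omega^c)$ (using $I_1(\Omega,\Omega^c)<+\infty$) and then invokes the auxiliary estimate~\eqref{estimatefrombelow} on small balls; you instead bound the Gagliardo seminorm $[\chi_{E_k}]_{H^{s_1/2}(\Omega)}$ directly from $I_1(E_k,E_k^c\cap\Omega)$ using the lower bound in~\eqref{stimakernel} on $B_\varrho$ plus a trivial estimate on the $|x-y|\ge\varrho$ tail, so you never need $I_1(\Omega,\Omega^c)<+\infty$. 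Both routes lead to the same compactness conclusion; as a small caution, the fractional Rellich--Kondrachov step deserves a line of care since $\Omega$ is merely open and bounded rather than an extension domain, but this is standard (cover $\overline{\Omega}$ by finitely many balls of radius $<\varrho/2$, on which~\eqref{estimatefrombelow} applies, and diagonalize), and the paper glosses over the same point. Everything else — the construction of a finite-energy competitor, the uniform lower bound on $\mathcal{C}$, and the final finiteness claim for $I_1(E,E^c\cap\Omega)$ — matches the paper in substance.
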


In the formulation given here, Proposition~\ref{existenceofminimizer}
is new in the literature, though its proof relies on an appropriate variation
of standard techniques, see e.g.~\cite{MR2675483, MR3717439}.
Nevertheless, we provide its proof in Appendix~\ref{APPENDI-A}, since here
we would like to point out
some modifications due to the facts that~$\sigma\in\R$ and the kernels have different homogeneities, differently from~\cite{MR3717439}.
\medskip

The volume constrained minimizers (and, more generally,
the volume constrained critical points)
obtained in Proposition~\ref{existenceofminimizer}
satisfy (under reasonable regularity assumptions on the domain and on the interaction kernels)
a suitable Euler-Lagrange equation, according to the following result.
To state it precisely, it is convenient to denote by~$
\mathrm{Reg}_E$ the collection of all those points~$x_0\in\overline{\Omega\cap\partial E}$
for which there exists~$\rho>0$
and~$\alpha\in(s_1,1)$ such that~$B_\rho(x_0)\cap\partial E$ is a
manifold of class~$C^{1,\alpha}$
possibly with boundary,
and the boundary (if any) is contained in~$\partial\Omega$,
see Figure~\ref{FIGURATBORDINP}.

\begin{figure}[h]
\includegraphics[width=0.55\textwidth]{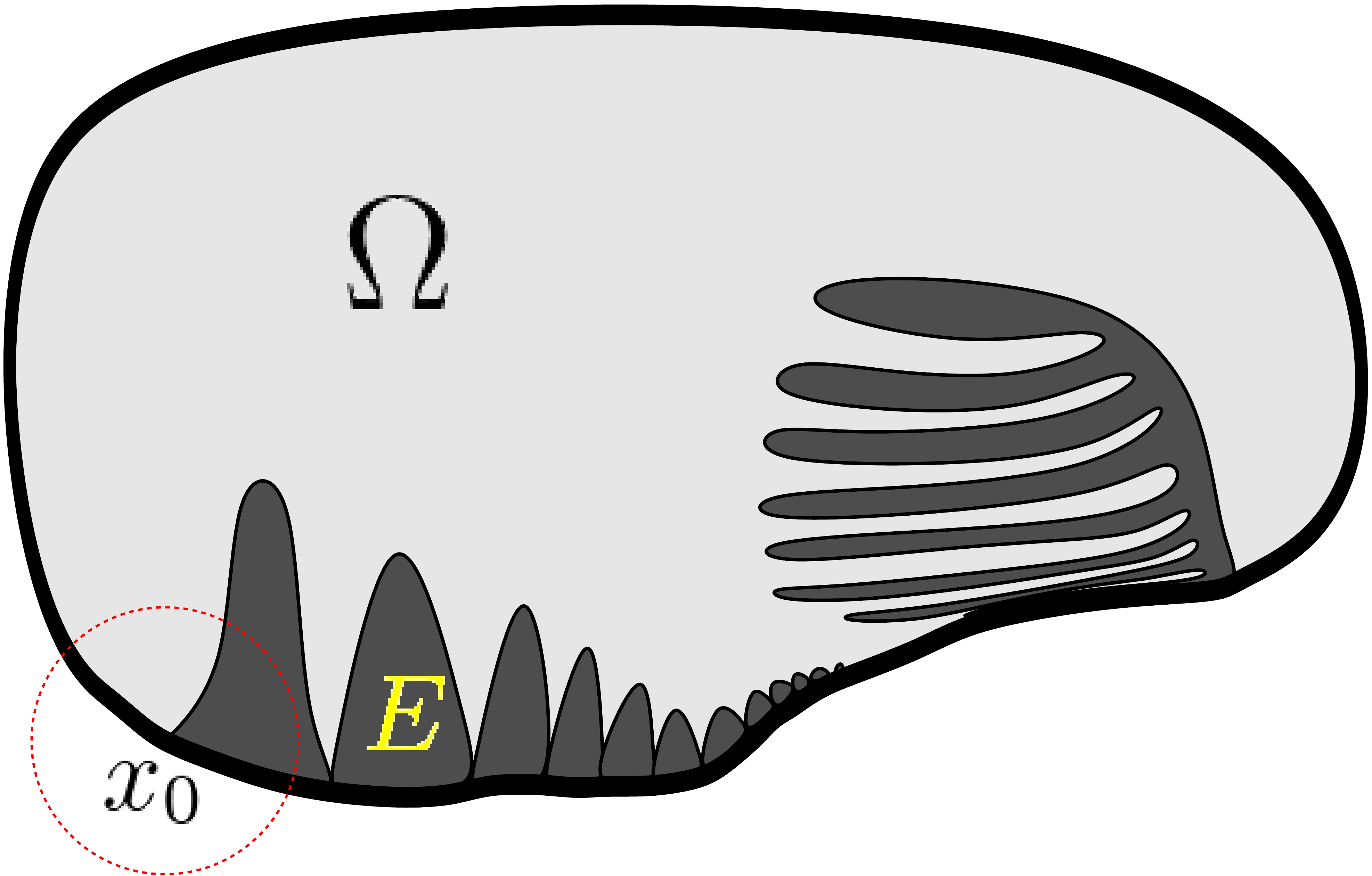}
\caption{The geometry involved in the definition of $\mathrm{Reg}_E$.}
        \label{FIGURATBORDINP}
\end{figure}

Given a kernel~$K\in \mathbf{K}(n,s_1,\lambda,\varrho)$, it is also convenient to recall the notion of
$K$-mean curvature, that is defined, for all $x\in \Omega\cap\mathrm{Reg}_E$, as
\begin{equation}\label{meancurvature}
\mathbf{H}^{K}_{\partial E}(x):=
\mathrm{p.v.}\int_{\R^n} K(x-y)\bigl(\chi_{E^c}(y)-\chi_E(y)\bigr) \,dy.
\end{equation}
Here $\mathrm{p.v.}$ stands for the principal value, that we omit from now on for the sake of
simplicity of notation.

We also say that~$E\subseteq\Omega$ is a critical point of~${\mathcal{C}}$ among sets
with prescribed Lebesgue measure
if
$$ \frac{d}{dt}\Big|_{t=0} {\mathcal{C}}\big(f_t(E)\big)= 0 ,$$
for every family of diffeomorphisms~$\{f_t\}_{|t|<\delta}$
such that, for each~$|t|<\delta$, one has that~$f_0={\rm{Id}}$, the support
of~$f_t-{\rm{Id}}$ is a compact set, $f_t(\Omega)=\Omega$ and~$|f_t(E)|=|E|$.

With this notation, we have the following result:

\begin{prop}[Euler-Lagrange equation]\label{teoremaEL}
Let~$K_1\in\mathbf{K}^1(n,s_1,\lambda, \varrho)$ and $K_2\in \mathbf{K}^1(n,s_2,\lambda,\varrho)$.
Let~$\Omega$ be an open bounded set with $C^1$-boundary,
$m\in (0,|\Omega|)$
and~$g\in C^1(\R^n)$.

Let~$E$ be a critical point of~${\mathcal{C}}$ in~\eqref{variationalproblem}
among all the sets with Lebesgue measure equal to~$m$. 

Then, there exists $c\in\R$ such that
\begin{equation}\label{E-Lw}
\begin{split}
&\iint_{E\times(E^c\cap \Omega)}\mathrm{div}_{(x,y)}\Big(K_1(x-y)\big(T(x),T(y)\big)\Big)\, dx\, dy\\
&+\sigma \iint_{E\times \Omega^c} \mathrm{div}_{(x,y)}\Big(K_2(x-y)\big(T(x),T(y)\big)\Big)\, dx\, dy+
\int_{E}\mathrm{div}(g\, T)=c\int_E \mathrm{div}T
\end{split}
\end{equation}
for every $T\in C^\infty_c(\mathbb{R}^n;\mathbb{R}^n)$ with
\[
T\cdot \nu_\Omega=0 \quad\text{on $\partial\Omega$}.
\]

Moreover, if $K_1\in \mathbf{K}^2(n,s_1,\lambda,\varrho)$ and $K_2\in \mathbf{K}^2(n,s_2,\lambda,\varrho)$, then
\begin{equation}\label{E-Ls}
\mathbf{H}^{K_{1}}_{\partial E}(x)-\int_{\Omega^c}K_1(x-y)\,dy+\sigma\int_{\Omega^c}K_2(x-y)\,dy+g(x)=c
\end{equation}
for all $x\in \Omega\cap\mathrm{Reg}_E$.
\end{prop}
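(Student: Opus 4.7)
The plan is to derive \eqref{E-Lw} as the first variation of $\mathcal{C}$ along volume-preserving flows of tangential vector fields, and then to pass to the pointwise equation \eqref{E-Ls} by integrating by parts on the product space $\R^n\times\R^n$, recovering the $K_1$-mean curvature as the resulting boundary trace.

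For \eqref{E-Lw}, given an admissible $T\in C^\infty_c(\R^n;\R^n)$ with $T\cdot\nu_\Omega=0$ on $\partial\Omega$, I would consider the flow $f_t$ generated by $T$. The tangentiality of $T$ on $\partial\Omega$ ensures $f_t(\Omega)=\Omega$ and $f_t(\Omega^c)=\Omega^c$, so $f_t(E)^c\cap\Omega=f_t(E^c\cap\Omega)$. The change of variables $x=f_t(\xi)$, $y=f_t(\eta)$ transports $I_1(f_t(E),f_t(E)^c\cap\Omega)$ into an integral over the fixed domain $E\times(E^c\cap\Omega)$ with integrand $K_1(f_t(\xi)-f_t(\eta))\det Df_t(\xi)\det Df_t(\eta)$, and similarly for $\sigma I_2(f_t(E),\Omega^c)$. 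Expanding $\det Df_t=1+t\,\mathrm{div}\,T+O(t^2)$ together with $K_i(f_t(\xi)-f_t(\eta))=K_i(\xi-\eta)+t\nabla K_i(\xi-\eta)\cdot(T(\xi)-T(\eta))+O(t^2)$ (legitimate since $K_i\in\mathbf{K}^1$), the first-order coefficient in $t$ simplifies, using the identity $\nabla_\eta K_i(\xi-\eta)=-\nabla K_i(\xi-\eta)$, to $\mathrm{div}_{(\xi,\eta)}\big(K_i(\xi-\eta)(T(\xi),T(\eta))\big)$, while the $g$-term contributes $\int_E\mathrm{div}(gT)$. The constraint $|f_t(E)|=|E|$ linearizes to $\int_E\mathrm{div}\,T=0$, so a Lagrange-multiplier argument supplies the constant $c\in\R$ and makes \eqref{E-Lw} hold for every admissible $T$.

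For \eqref{E-Ls}, now assuming $K_i\in\mathbf{K}^2$ and taking $T$ supported in a ball $B_\rho(x_0)\Subset\Omega$ with $x_0\in\Omega\cap\mathrm{Reg}_E$ and $B_\rho(x_0)\cap\partial E$ of class $C^{1,\alpha}$ for some $\alpha>s_1$, I would split
\[
\mathrm{div}_{(x,y)}\big(K_1(x-y)(T(x),T(y))\big)=\mathrm{div}_x\big(K_1(x-y)T(x)\big)+\mathrm{div}_y\big(K_1(x-y)T(y)\big)
\]
and apply the divergence theorem in each block, in $x$ over $E$ and in $y$ over $E^c\cap\Omega$ respectively. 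Boundary contributions on $\partial\Omega$ vanish thanks to $T\cdot\nu_\Omega=0$; using the evenness $K_1(x-y)=K_1(y-x)$ and relabelling dummy variables, the two pieces merge into
\[
\int_{\partial E\cap B_\rho(x_0)} T(x)\cdot\nu_E(x)\left[\int_{E^c\cap\Omega}K_1(x-y)\,dy-\int_E K_1(x-y)\,dy\right] d\mathcal{H}^{n-1}(x),
\]
whose bracket equals $\mathbf{H}_{\partial E}^{K_1}(x)-\int_{\Omega^c}K_1(x-y)\,dy$ since $E\subseteq\Omega$. An identical analysis of $\sigma\iint_{E\times\Omega^c}\mathrm{div}_{(x,y)}(K_2(x-y)(T(x),T(y)))$ yields $\sigma\int_{\Omega^c}K_2(x-y)\,dy$ (the other boundary piece again dying on $\partial\Omega$), while the $g$ and Lagrange terms furnish $g(x)$ and $-c$. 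The arbitrariness of $T\cdot\nu_E$ on $\partial E\cap B_\rho(x_0)$ then forces \eqref{E-Ls} pointwise.

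The hardest part will be the rigorous handling of the singularity along the diagonal $\{x=y\}$ during the integration by parts in the second step. I would first truncate to $\{|x-y|>\e\}$, apply the divergence theorem on the resulting regular product domain, and then pass $\e\to 0^+$: the $\mathbf{K}^2$ bound \eqref{stimaderivatakernel} controls the spurious boundary contributions arising on $\{|x-y|=\e\}$, while the oddness of $\nabla K_i$ combined with the $C^{1,\alpha}$ regularity of $\partial E$ (with $\alpha>s_1$) produces the cancellation that yields the principal value in \eqref{meancurvature}. A secondary technical point, namely the differentiation under the double integral in the first step, is justified by \eqref{stimaderivatakernel} with $h=1$ together with the finiteness $I_1(E,E^c\cap\Omega)<+\infty$ granted by Proposition~\ref{existenceofminimizer}.
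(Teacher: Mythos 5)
Your proof follows exactly the route the paper itself prescribes — the paper omits the argument and refers the reader to Theorem~1.3 in~\cite{MR3717439}, asking for ``obvious modifications due to the presence of different kernels,'' and that is precisely what you have carried out: first variation via the flow change-of-variables and Taylor expansion of $\det Df_t$ and $K_i\circ f_t$ (with the identity $\nabla_\eta K_i(\xi-\eta)=-\nabla K_i(\xi-\eta)$ reassembling the divergence), Lagrange multiplier for the linearized volume constraint, and then localization plus blockwise integration by parts with a truncation $\{|x-y|>\e\}$ to recover the principal value in $\mathbf{H}^{K_1}_{\partial E}$. The bookkeeping of signs and of the two kernels is correct (the $\sigma I_2$ block contributes only the $x$-boundary piece since $T$ vanishes near $\partial\Omega$, and the $K_1$ bracket equals $\mathbf{H}^{K_1}_{\partial E}-\int_{\Omega^c}K_1$ because $E\subseteq\Omega$), so the proposal is sound and agrees with the intended proof.
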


The proof of Proposition~\ref{teoremaEL}
relies on a modification
of techniques previously exploited in~\cite{MR2675483, MR3322379, MR3717439}.
We omit the proof here since
one can follow precisely the proof of
Theorem~1.3 in~\cite{MR3717439} with obvious modifications due to the presence of different
kernels.
\medskip

We now present the main results of this paper, which are focused
on the determination of the contact angle.

\subsection{Main results: nonlocal Young's law}

One of the pivotal steps of any capillarity theory is the determination
of the contact angle between the droplet and the container
(in jargon, the Young's law). In our setting,
this Young's law is very sensitive to the relative homogeneity of the
interacting kernels. 
\medskip

Loosely speaking, when~$s_1< s_2$, at small scales (which are the ones
which we believe are more significant in the local determination of the contact angle),
the interaction between the droplet and the exterior of the container
prevails\footnote{For instance, when~$s_1<s_2$, $\Omega:=\{x_n>0\}$,
$E:=\{0<x_n<\beta\,|x'|\}$
and~$r\in(0,\varrho)$,
one sees from~\eqref{stimakernel} and the change of variables~$(X,Y):=\left(\frac{x}r,\frac{y}r\right)$ that
\begin{eqnarray*}&&
\frac{I_1(E\cap B_r,E^c\cap\Omega\cap B_r)}{I_2(E\cap B_r,\Omega^c\cap B_r)}\le\lambda^2\;
\frac{\displaystyle\iint_{(E\cap B_r)\times(E^c\cap\Omega\cap B_r)}\frac{dx\,dy}{|x-y|^{n+s_1}}}{
\displaystyle\iint_{(E\cap B_r)\times(\Omega^c\cap B_r)}\frac{dx\,dy}{|x-y|^{n+s_2}}}\\&&\qquad=\lambda^2\;r^{s_2-s_1}\;
\frac{\displaystyle\iint_{(E\cap B_1)\times(E^c\cap\Omega\cap B_1)}\frac{dX\,dY}{|X-Y|^{n+s_1}}}{
\displaystyle\iint_{(E\cap B_1)\times(\Omega^c\cap B_1)}\frac{dX\,dY}{|X-Y|^{n+s_2}}},
\end{eqnarray*}
which is infinitesimal when~$r\searrow0$. This suggests that in the small vicinity of contact points,
when~$s_1<s_2$, the effect of the kernel~$K_2$ in the determination of the energy minimizers
and of their geometric properties plays a dominant role with respect to that played by~$K_1$.} with respect to the one between the droplet and the interior of the container.
Thus, in this situation, the 
sign of the relative adhesion coeﬃcient~$\sigma$ becomes determinant:
in the hydrophilic regime~$\sigma<0$ the droplet is ``absorbed'' 
by the boundary of the container, thus producing a zero contact angle;
instead, in the hydrorepellent regime~$\sigma>0$
the droplet is ``held off'' the boundary of the container,
thus producing a contact angle equal to~$\pi$; finally,
in the neutral case~$\sigma=0$ the behavior of the second interaction kernel
becomes irrelevant. When~$\sigma=0$ and additionally the problem is isotropic, the contact angle becomes~$\pi/2$.
\medskip

Conversely, when~$s_1>s_2$,
the interaction between the droplet and the interior of the container
is, at small scales, significantly stronger
than that between the droplet and the exterior of the container.
In this situation, the relative adhesion coeﬃcient~$\sigma$
does not play any role and the contact angle is determined
by an integral cancellation condition (that will be explicitly provided in~\eqref{cancellation}).
When the first kernel is isotropic, this condition simplifies and the contact
angle is proved to be~$\pi/2$.

More precisely, the determination of the contact angle relies on a delicate cancellation
of the singular kernel contributions, which requires the determination
of an auxiliary angle which is ``symmetric'' (in a suitable sense of ``measuring singular
interactions'') with respect to the contact angle itself: this ``dual contact angle''
will be denoted by~$\widehat{\vartheta}$ and the cancellation property will be described
in detail in the forthcoming formula~\eqref{cancellation}.
\medskip

The detailed analysis of the contact angle when~$s_1\ne s_2$ is given in the forthcoming
Theorem~\ref{thmYounglaw}.
When instead~$s_1=s_2$, the internal and external interactions
equally contribute at small scales. This situation will be analyzed in Theorem~\ref{thmYounglawa1a2nonconst}
and will lead to a contact angle described by an integral condition (given explicitly in~\eqref{Ylaw}
and reformulated in~\eqref{sigmagenerale-0} below).

We now dive into the technicalities required for the determination
of the contact angle. Namely, using the Euler-Lagrange equation in~\eqref{E-Ls} and taking blow-ups
along sequences of interior points converging
to~$\partial\Omega\cap \mathrm{Reg}_E$, we derive
two versions of the nonlocal Young's law depending on whether $s_1\neq s_2$ or $s_1=s_2$. For this, we introduce the following notations that will be used throughout all this paper:
\begin{itemize}
\item given a set~$F\subseteq\R^n$, $x_0\in\R^n$
and~$r>0$, we let
\begin{equation}\label{blowupsets}
F^{x_0,r}:=\frac{F-x_0}{r};
\end{equation}
\item for any two angles $\vartheta_1$, $\vartheta_2\in [0,2\pi)$, with~$\vartheta_1<\vartheta_2$, we define
\begin{equation}\label{Jtheta1theta2}
J_{\vartheta_1,\vartheta_2}:=
\Big\{x\in \mathbb{R}^n\,:\, \exists\ \beta\in (\vartheta_1, \vartheta_2),\;
\rho>0\ \mathrm{such\ that\ }(x_1,x_n)=\rho(\cos\beta,\sin\beta) \Big\};
\end{equation}
\item for any angle $\alpha$, we set
\begin{equation}\label{etheta}
e(\alpha):=\cos\alpha \, e_1+\sin\alpha\, e_n.
\end{equation}
\end{itemize}
In order to establish the nonlocal Young's law, we
consider $K_1\in\mathbf{K}^2(n,s_1,\lambda,\varrho)$ and $K_2\in\mathbf{K}^2(n,s_2,\lambda,\varrho)$
such that the associated blow-up
kernels defined as in~\eqref{Kast} are well-defined and given by
\begin{equation}\label{adsyrytrjgtjhg697867}
K_1^\ast(\zeta)=\frac{a_1(\overrightarrow{\zeta})}{|\zeta|^{n+s_1}}\qquad{\mbox{and}}\qquad  K_2^\ast(\zeta)=\frac{a_2(\overrightarrow{\zeta})}{|\zeta|^{n+s_2}},
\end{equation}
where $\overrightarrow{\zeta}:=\frac{\zeta}{|\zeta|}$ and $a_1,a_2 $ are continuous functions on~$\partial B_1$,
bounded from above and below by two positive constants and satisfying
\begin{equation}\label{AIPA} a_i(\omega)=a_i(-\omega) \end{equation}
for all~$\omega\in\partial B_1$ and~$i\in\{1,2\}$.\medskip

Before exhibiting the main results of this paper, we premise the following result which has been thought in order to reproduce a cancellation of terms as in~\cite{MR3717439}.
This result points out that in this context a new construction is needed since the function~$a_1$ is anisotropic.

\begin{prop}\label{cancellazione} 
Given~$\vartheta\in(0,\pi)$, for every $\bar{\vartheta}\in (0,2\pi)$ let
\begin{equation}\label{D}
\mathcal{D}_\vartheta(\bar{\vartheta}):=
\int_{J_{\vartheta,\vartheta+\bar{\vartheta}}}\frac{a_1(\overrightarrow{x-e(\vartheta)})}{|x-e(\vartheta)|^{n+s_1}}\, dx-\int_{J_{0,\vartheta}}\frac{a_1(\overrightarrow{x-e(\vartheta)})}{|x-e(\vartheta)|^{n+s_1}}\, dx .
\end{equation}
Then, 
\begin{eqnarray}
&&{\mbox{$\mathcal{D}_\vartheta$ is well-defined in the principal value sense;}} \label{XRTS-1}\\
&&{\mbox{$\mathcal{D}_\vartheta$ is continuous in~$(0,2\pi)$;}} \label{XRTS-190}\\
&&\lim_{\bar{\vartheta}\searrow 0}\mathcal{D}_\vartheta(\bar{\vartheta}) = -\infty ; \label{limit1}\\
&&\lim_{\bar{\vartheta}\nearrow 2\pi}\mathcal{D}_\vartheta(\bar{\vartheta}) = +\infty.\label{limit2}
\end{eqnarray} 
Moreover,
for every~$c\in\R$ and every angle $\vartheta\in (0,\pi)$, there exists a  unique angle $\widehat{\vartheta}\in (0,2\pi)$ such that 
\begin{equation}\label{cancellation}
\mathcal{D}_\vartheta(\widehat{\vartheta})=c.
\end{equation}
\end{prop}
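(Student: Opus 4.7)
The plan is to build everything from two observations. First, by the evenness assumption~\eqref{AIPA}, the integrand $a_1(\overrightarrow{x-e(\vartheta)})/|x-e(\vartheta)|^{n+s_1}$ is invariant under the reflection $R: x \mapsto 2e(\vartheta) - x$. Second, in a sufficiently small ball around $e(\vartheta)$ the cones $J_{0,\vartheta}$ and $J_{\vartheta, \vartheta+\bar{\vartheta}}$ reduce to the two complementary open half-balls cut out by the hyperplane through $e(\vartheta)$ that is spanned by $e(\vartheta)$ and $e_2, \ldots, e_{n-1}$; and these two half-balls are interchanged by $R$.

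For~\eqref{XRTS-1}, I fix $\bar{\vartheta} \in (0, 2\pi)$ and pick $\epsilon>0$ smaller than both the distance from $e(\vartheta)$ to the ray at angle $0$ (positive since $\vartheta\in(0,\pi)$) and the distance from $e(\vartheta)$ to the ray at angle $\vartheta + \bar{\vartheta}$ (strictly positive since $\bar{\vartheta}\in(0,2\pi)$). The second observation then applies in $B_\epsilon(e(\vartheta))$, and together with the first it gives, for every $0<\delta<\epsilon$,
\[
\int_{(J_{\vartheta,\vartheta+\bar{\vartheta}} \cap B_\epsilon(e(\vartheta))) \setminus B_\delta(e(\vartheta))} \frac{a_1(\overrightarrow{x-e(\vartheta)})}{|x-e(\vartheta)|^{n+s_1}}\, dx = \int_{(J_{0,\vartheta} \cap B_\epsilon(e(\vartheta))) \setminus B_\delta(e(\vartheta))} \frac{a_1(\overrightarrow{x-e(\vartheta)})}{|x-e(\vartheta)|^{n+s_1}}\, dx.
\]
Hence the $\delta$-truncated difference defining $\mathcal{D}_\vartheta(\bar{\vartheta})$ is constant in $\delta \in (0,\epsilon)$ and equals
\[
\mathcal{D}_\vartheta(\bar{\vartheta}) = \int_{J_{\vartheta,\vartheta+\bar{\vartheta}} \setminus B_\epsilon(e(\vartheta))} \frac{a_1(\overrightarrow{x-e(\vartheta)})}{|x-e(\vartheta)|^{n+s_1}}\, dx - \int_{J_{0,\vartheta} \setminus B_\epsilon(e(\vartheta))} \frac{a_1(\overrightarrow{x-e(\vartheta)})}{|x-e(\vartheta)|^{n+s_1}}\, dx,
\]
each summand an absolutely convergent Lebesgue integral (the singularity at $e(\vartheta)$ has been excised and, after integrating out the free directions $x_2,\ldots,x_{n-1}$, the integrand decays at infinity like $|x|^{-(2+s_1)}$). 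This establishes~\eqref{XRTS-1}.

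For~\eqref{XRTS-190},~\eqref{limit1} and~\eqref{limit2}, I would exploit the additivity identity
\[
\mathcal{D}_\vartheta(\bar{\vartheta}_2) - \mathcal{D}_\vartheta(\bar{\vartheta}_1) = \int_{J_{\vartheta+\bar{\vartheta}_1, \vartheta+\bar{\vartheta}_2}} \frac{a_1(\overrightarrow{x-e(\vartheta)})}{|x-e(\vartheta)|^{n+s_1}}\, dx, \qquad 0 < \bar{\vartheta}_1 < \bar{\vartheta}_2 < 2\pi,
\]
which is immediate from the finite formula above since the $J_{0,\vartheta}$ contributions cancel. Dominated convergence applied with $\bar{\vartheta}_1, \bar{\vartheta}_2$ in a small neighborhood of a given $\bar{\vartheta}_0 \in (0,2\pi)$ yields continuity at $\bar{\vartheta}_0$, proving~\eqref{XRTS-190}; strict monotonicity of $\mathcal{D}_\vartheta$ follows at once because $a_1$ is bounded below by a positive constant. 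For~\eqref{limit1} I take $\bar{\vartheta}_2 = \pi$ and let $\bar{\vartheta}_1 = \bar{\vartheta} \searrow 0$: the regions $J_{\vartheta+\bar{\vartheta},\vartheta+\pi}$ expand monotonically to the half-space $J_{\vartheta,\vartheta+\pi}$, whose boundary contains $e(\vartheta)$ and on which the integrand is not integrable; monotone convergence thus gives $+\infty$, so $\mathcal{D}_\vartheta(\bar{\vartheta}) \to -\infty$. The symmetric choice $\bar{\vartheta}_1 = \pi$, $\bar{\vartheta}_2 \nearrow 2\pi$ proves~\eqref{limit2}. Finally,~\eqref{cancellation} is immediate: $\mathcal{D}_\vartheta$ is a continuous strictly increasing bijection from $(0,2\pi)$ onto $\mathbb{R}$, so each $c \in \mathbb{R}$ has a unique preimage $\widehat{\vartheta}$.

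The main obstacle is the exact cancellation producing~\eqref{XRTS-1}. In the isotropic case of~\cite{MR3717439} the corresponding identity follows from the rotational symmetry of the kernel; here the anisotropy of $a_1$ precludes that shortcut, and one must instead identify the specific reflection $x \mapsto 2e(\vartheta) - x$ that preserves both the integrand (via~\eqref{AIPA}) and the local half-ball decomposition of the two cones. This is precisely the new ingredient that the proposition is designed to isolate.
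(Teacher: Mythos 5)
Your proof is correct. The treatment of~\eqref{XRTS-1} and~\eqref{XRTS-190} (the point reflection $x\mapsto 2e(\vartheta)-x$ exploiting~\eqref{AIPA}, followed by the additivity identity and dominated convergence) follows the same lines as the paper's. The interesting divergence is in~\eqref{limit1} and~\eqref{limit2}.

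The paper derives these two limits through an intermediate statement, namely that
$\lim_{\varepsilon\searrow 0}\big(\int_{J_{\vartheta-\varepsilon,\vartheta}}-\int_{J_{\vartheta,\vartheta+2\varepsilon}}\big)=-\infty$ in the principal value sense (and its counterpart with the roles swapped). Establishing this requires a delicate set decomposition near $e(\vartheta)$: one splits the thin wedges into a part on which the point reflection cancels exactly, a part on which the horizontal reflection $x\mapsto(x',-x_n)$ \emph{approximately} cancels thanks to the modulus of continuity of $a_1$, and a small ``bad'' set near the top ray on which one uses a measure estimate. The final estimate is of the form $\Xi\le C\varepsilon^{-s_1}\big(\sigma(2/|\log\varepsilon|)+\ldots\big)-c\varepsilon^{-s_1}$, and the whole construction is genuinely analytic. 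Your route is much shorter: since~\eqref{XRTS-1} already gives $\mathcal{D}_\vartheta(\pi)\in\mathbb{R}$, the additivity identity reduces~\eqref{limit1} to the statement that $\int_{J_{\vartheta+\bar\vartheta,\vartheta+\pi}} a_1(\overrightarrow{x-e(\vartheta)})|x-e(\vartheta)|^{-n-s_1}\,dx\to+\infty$ as $\bar\vartheta\searrow0$, which is immediate from the monotone convergence theorem, the lower bound $a_1\ge c>0$, and the divergence of $\int_{\text{half-ball}}|x-e(\vartheta)|^{-n-s_1}\,dx$. Besides being shorter, your argument is more robust: it uses only boundedness of $a_1$ from below (not its continuity, nor its evenness) for the limit statements, whereas the paper's argument for the auxiliary limit leans on continuity essentially. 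The paper's isolation of the point reflection as ``the new ingredient'' for~\eqref{XRTS-1} is exactly what you identified; what you add is the observation that the blow-up at the endpoints of $(0,2\pi)$ needs no companion cancellation lemma once finiteness at a single interior point is in hand.

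One small care point worth making explicit: the additivity identity $\mathcal{D}_\vartheta(\bar\vartheta_2)-\mathcal{D}_\vartheta(\bar\vartheta_1)=\int_{J_{\vartheta+\bar\vartheta_1,\vartheta+\bar\vartheta_2}}$ is a difference of two \emph{finite} principal values, and the right-hand side is an absolutely convergent Lebesgue integral precisely because $\overline{J_{\vartheta+\bar\vartheta_1,\vartheta+\bar\vartheta_2}}$ stays away from $e(\vartheta)$ when $0<\bar\vartheta_1<\bar\vartheta_2<2\pi$. That is what licenses the split $\mathrm{p.v.}\int_{J_{\vartheta,\vartheta+\bar\vartheta_2}}=\mathrm{p.v.}\int_{J_{\vartheta,\vartheta+\bar\vartheta_1}}+\int_{J_{\vartheta+\bar\vartheta_1,\vartheta+\bar\vartheta_2}}$. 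You use it correctly, but in a written-out proof this is the step a referee would want spelled out.
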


\begin{figure}[h]
\includegraphics[width=0.55\textwidth]{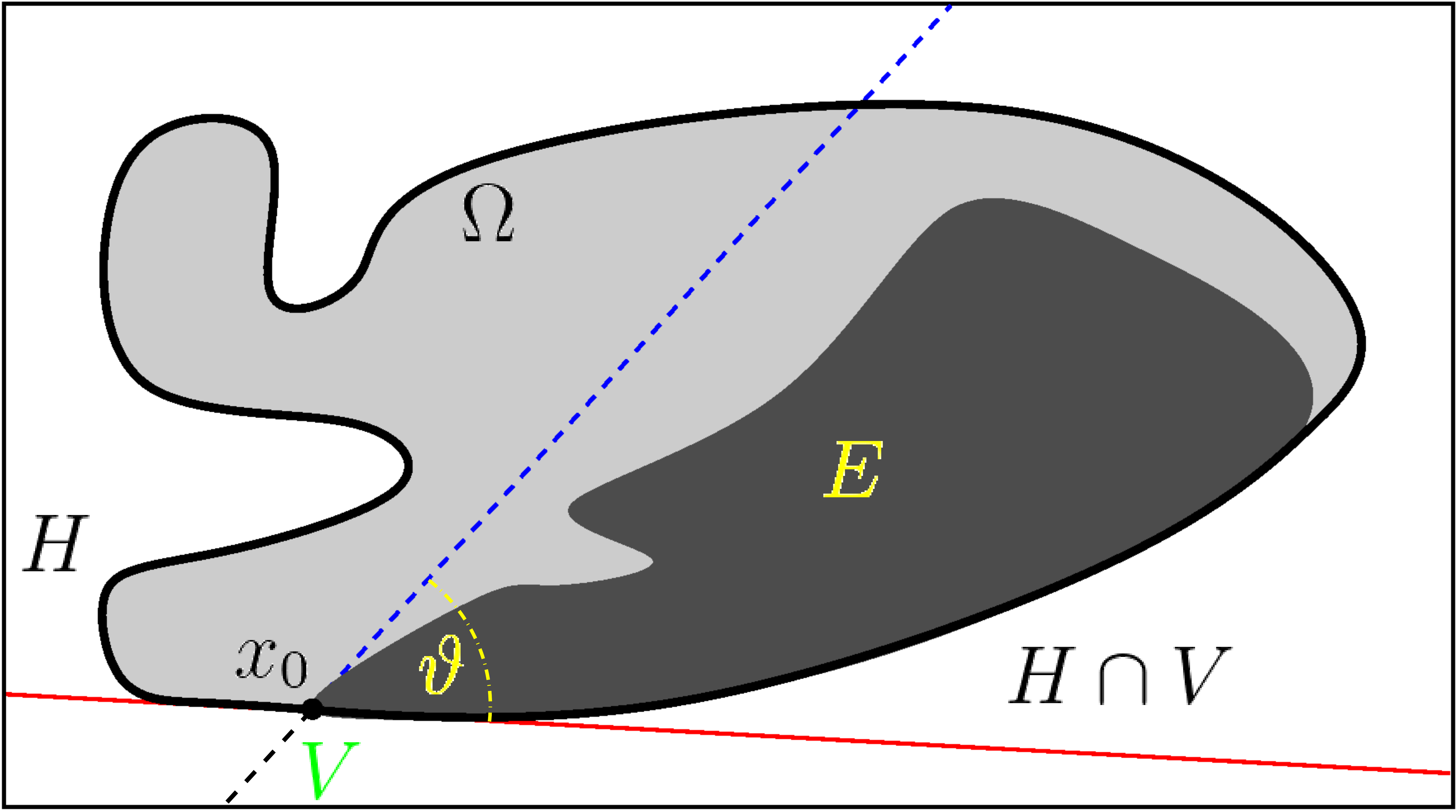}
\caption{The geometry involved in the asymptotics in~\eqref{NOHA}.}
        \label{FIGURATB}
\end{figure}

We showcase below a first version of the nonlocal Young's law corresponding to the case~$s_1\neq s_2$.

\begin{thm}\label{thmYounglaw}
Let~$K_1\in\mathbf{K}^2(n,s_1,\lambda,\varrho)$ and~$K_2\in\mathbf{K}^2(n,s_2,\lambda,\varrho)$.
Suppose that $K_1$, $K_2$ admit
blow-up limits~$K_1^\ast$, $K_2^\ast$
(according to~\eqref{Kast}) that satisfy assumption~\eqref{adsyrytrjgtjhg697867}. 

Let $g\in C^1(\R^n)$. Let $\Omega$ be an open bounded set with $C^1$-boundary and $E$ be a volume-constrained critical set of $\mathcal{C}$. 

Let $x_0\in \mathrm{Reg}_E\cap \partial\Omega$ and suppose that $H$ and $V$ are open
half-spaces such that
\begin{equation}\label{NOHA}
\Omega^{x_0,r}\rightarrow H\qquad {\mbox{and}}\qquad
E^{x_0,r}\rightarrow H\cap V\quad {\mbox{in }} L^1_\mathrm{loc}(\R^n)
\;{\mbox{as }}r\rightarrow 0^+.
\end{equation}
Let also~$\vartheta\in [0,\pi]$ be the angle between
the half-spaces $H$ and $V$, that is $H\cap V=J_{0,\vartheta}$ in the notation of~\eqref{Jtheta1theta2}.

Then, the following statements hold true.
\begin{itemize}
\item[1)] If~$s_1<s_2$ and~$\sigma<0$ then~$\vartheta=0$.

\item[2)] If~$s_1<s_2$ and~$\sigma>0$ then~$\vartheta=\pi$.

\item[3)] If
\begin{equation}\label{magvalimpo2}
{\mbox{either~$s_1<s_2$ and~$\sigma=0$, or~$s_1>s_2$,}}\end{equation} then~$\vartheta\in(0,\pi)$. 
Also, letting $\widehat{\vartheta}\in (0,2\pi)$ be as in~\eqref{cancellation}
with~$c=0$, we have that~$\widehat{\vartheta}=\pi-\vartheta$. 
Moreover,
for all $v\in H\cap \partial V$, 
\begin{equation}\label{Y:LS}
\mathbf{H}^{K_1^\ast}_{\partial (H\cap V)} (v)-\int_{H^c}K_1^\ast (v-y)\,dy=0.
\end{equation}
\end{itemize}
\end{thm}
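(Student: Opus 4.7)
The plan is to extract the nonlocal Young's law by testing the Euler--Lagrange equation \eqref{E-Ls} along an interior sequence $x_k\in\partial E\cap\Omega$ with $x_k\to x_0$, and then to blow it up at $x_0$. Set $r_k:=|x_k-x_0|$ and $v_k:=(x_k-x_0)/r_k$, passing if needed to a subsequence so that $v_k\to v\in\overline{H\cap\partial V}$. Since $E\subseteq\Omega$ I first rewrite \eqref{E-Ls} in the equivalent form
\begin{equation*}
\int_{\Omega}K_1(x_k-y)\bigl(\chi_{E^c}(y)-\chi_E(y)\bigr)\,dy+\sigma\int_{\Omega^c}K_2(x_k-y)\,dy+g(x_k)=c.
\end{equation*}
After the change of variables $y=x_0+r_k z$, the rescaled kernel $r_k^{n+s_i}K_i(r_k\,\cdot)$ converges to $K_i^\ast$ by \eqref{Kast}--\eqref{adsyrytrjgtjhg697867}, and combined with the $L^1_{\mathrm{loc}}$-convergences $\Omega^{x_0,r_k}\to H$ and $E^{x_0,r_k}\to H\cap V$ from \eqref{NOHA} this yields, for each $v$ in the open half-space $H$,
\begin{align*}
r_k^{s_i}\int_{\Omega^c}K_i(x_k-y)\,dy&\longrightarrow \int_{H^c}K_i^\ast(v-z)\,dz\qquad(i=1,2),\\
r_k^{s_1}\int_\Omega K_1(x_k-y)\bigl(\chi_{E^c}-\chi_E\bigr)\,dy&\longrightarrow \mathbf{H}^{K_1^\ast}_{\partial(H\cap V)}(v)-\int_{H^c}K_1^\ast(v-z)\,dz,
\end{align*}
both finite precisely when $\vartheta\in(0,\pi)$.

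Under the assumption \eqref{magvalimpo2}, I multiply the rewritten EL equation through by $r_k^{s_1}$: the $K_2$-contribution becomes $\sigma\,r_k^{s_1-s_2}\int_{H^c}K_2^\ast(v-z)\,dz$, which vanishes in the limit because either $\sigma=0$ or $s_1-s_2>0$, while $r_k^{s_1}g(x_k)$ and $r_k^{s_1}c$ are $O(r_k^{s_1})\to 0$. The surviving terms are exactly those in \eqref{Y:LS}, and the finiteness of both limits forces $\vartheta\in(0,\pi)$. To identify $\widehat\vartheta=\pi-\vartheta$, I would specialize \eqref{Y:LS} to $v:=e(\vartheta)\in H\cap\partial V$ and split the mean-curvature integral as
\begin{equation*}
\mathbf{H}^{K_1^\ast}_{\partial(H\cap V)}(e(\vartheta))=\int_{H^c}K_1^\ast(e(\vartheta)-y)\,dy+\int_{H\cap V^c}K_1^\ast(e(\vartheta)-y)\,dy-\int_{H\cap V}K_1^\ast(e(\vartheta)-y)\,dy.
\end{equation*}
The first summand cancels the $\int_{H^c}K_1^\ast$ in \eqref{Y:LS}, and the remaining two integrals, written over $H\cap V^c=J_{\vartheta,\pi}$ and $H\cap V=J_{0,\vartheta}$ and rewritten via the evenness \eqref{AIPA} of $a_1$ (which turns $a_1(\overrightarrow{e(\vartheta)-y})$ into $a_1(\overrightarrow{y-e(\vartheta)})$), reproduce precisely $\mathcal{D}_\vartheta(\pi-\vartheta)$ in the notation of \eqref{D}. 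Thus \eqref{Y:LS} is equivalent to $\mathcal{D}_\vartheta(\pi-\vartheta)=0$, and the uniqueness part of Proposition~\ref{cancellazione} with $c=0$ delivers $\widehat\vartheta=\pi-\vartheta$.

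For parts 1) and 2), where $s_1<s_2$ and $\sigma\neq 0$, I multiply instead by $r_k^{s_2}$: the $K_1$-, $g$- and $c$-terms all carry a strictly positive power of $r_k$ and disappear, leaving the constraint $\sigma\int_{H^c}K_2^\ast(v-z)\,dz=0$. Since this integral is strictly positive and finite whenever $v$ lies in the open half-space $H$, the hypothesis $\vartheta\in(0,\pi)$ is self-contradictory, so $\vartheta\in\{0,\pi\}$. To then select the correct extreme I would compare the sign of the dominant $\sigma\,I_2$-contribution to a directional derivative of $\mathcal{C}$ along volume-preserving perturbations that move $E$ towards or away from $\partial\Omega$: for $\sigma<0$ the hydrophilic $K_2$-term rewards proximity of $E$ to $\partial\Omega$ and rules out the detached profile $\vartheta=\pi$, giving $\vartheta=0$, and symmetrically $\sigma>0$ gives $\vartheta=\pi$.

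The main technical obstacle I anticipate is the rigorous justification of the convergences above as genuine principal-value limits: the $C^{1,\alpha}$-regularity of $\partial E\cap\Omega$ near $x_0$ with $\alpha>s_1$ built into $\mathrm{Reg}_E$ is essential to tame the singularity of $K_1$ at $v$, together with the two-sided bounds on $a_1,a_2$ and the kernel estimates \eqref{stimakernel}--\eqref{stimaderivatakernel} to pass the rescaled kernels to the limit uniformly. A secondary subtlety is the distinction between $\vartheta=0$ and $\vartheta=\pi$ in parts 1)--2), which is invisible to the local EL blow-up and must be extracted from the extra sign comparison sketched above.
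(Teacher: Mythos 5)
Your blow-up strategy (evaluate the Euler--Lagrange identity~\eqref{E-Ls} at interior points $x_k\to x_0$, multiply by the appropriate power of $r_k=|x_k-x_0|$, pass to the limit, and then decode~\eqref{Y:LS} via~$\mathcal{D}_\vartheta$ and the uniqueness part of Proposition~\ref{cancellazione}) is indeed the paper's plan, and your derivation of $\widehat\vartheta=\pi-\vartheta$ from~\eqref{Y:LS} via the polar split of $\mathbf{H}^{K_1^\ast}$ matches the paper's computation. But there are two genuine gaps, one in part 3) and one in parts 1)--2).

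\textbf{Gap in part 3), case $s_1>s_2$, $\sigma\ne0$.} You argue that the $K_2$-contribution vanishes ``because either $\sigma=0$ or $s_1-s_2>0$,'' treating $\sigma\,r_k^{s_1-s_2}\int_{H^c}K_2^\ast(v-z)\,dz$ as a vanishing factor times a finite integral. That integral is finite only when $v$ lies in the \emph{open} half-space $H$, i.e.\ only when $\vartheta\in(0,\pi)$ --- exactly the conclusion you are trying to reach. When $\vartheta\in\{0,\pi\}$, $v$ lands on $\partial H$, and $r_k^{s_2}\int_{\Omega^c}K_2(x_k-y)\,dy$ diverges; the $K_1$-mean-curvature term (with the $r_k^{s_1}$ weight) also diverges, of opposite sign. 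One therefore has a $0\times\infty$ competition that must be resolved quantitatively. The paper's Lemma~\ref{lemmaangoletti2} carries out this resolution by comparing the two singular contributions on concentric thin regions near $v_k$ and showing that $s_1>s_2$ makes the $K_1$-divergence dominant, so that the combined expression still diverges and contradicts the fact that the limit in~\eqref{importnbtf0000} is zero. Without such an estimate your nondegeneracy claim is unsupported.

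\textbf{Gap in parts 1) and 2).} Your contradiction argument that $\vartheta\notin(0,\pi)$ when $s_1<s_2$ and $\sigma\ne0$ is fine. But the subsequent selection between $\vartheta=0$ and $\vartheta=\pi$ is only sketched as a heuristic (``compare the sign of the dominant $\sigma I_2$-contribution to a directional derivative of $\mathcal{C}$\,\dots''), and that sketch is not an argument: you would have to produce a specific admissible volume-preserving perturbation, compute the sign of the first variation, and relate it to $\vartheta$, none of which is done. The paper achieves this selection directly from the Euler--Lagrange identity: it proves (Lemma~\ref{angolettilemma}, items i) and ii)) that $r_k^{s_1}\bigl[\mathbf{H}^{K_1}_{\partial E}(x_k)-\int_{\Omega^c}K_1(x_k-y)\,dy\bigr]\to+\infty$ if $\vartheta=0$ and $\to-\infty$ if $\vartheta=\pi$; this signed one-sided divergence, combined with the known sign of $\sigma\,r_k^{s_1-s_2}r_k^{s_2}\int_{\Omega^c}K_2(x_k-y)\,dy$, makes the limit in~\eqref{importnbtf0000} incompatible with the wrong extreme and forces $\vartheta=0$ when $\sigma<0$ and $\vartheta=\pi$ when $\sigma>0$. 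This signed divergence lemma, which also underlies the nondegeneracy claim in the $\sigma=0$ sub-case of part 3), is missing from your proposal and is the essential technical ingredient you would need to supply.
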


The asymptotics in~\eqref{NOHA} are depicted in Figure~\ref{FIGURATB}.
As a particular case of Theorem~\ref{thmYounglaw}, we single out the special situation
in which the kernel~$K^*_1$ is isotropic. In this setting, 
the cancellation condition in~\eqref{cancellation}
boils down to an explicit condition for the contact angle,
and
we have:
 
\begin{cor}\label{corollario1}
Under the same assumptions of Theorem \ref{thmYounglaw},
we additionally suppose that~$a_1\equiv \mathrm{const}$.

Then, the following statements hold true.
\begin{itemize}
\item[1)] If~$s_1<s_2$ and~$\sigma<0$ then~$\vartheta=0$.

\item[2)] If~$s_1<s_2$ and~$\sigma>0$ then~$\vartheta=\pi$.

\item[3)] If either~$s_1<s_2$ and~$\sigma=0$, or~$s_1>s_2$, then~$\vartheta=\frac{\pi}2$. 
\end{itemize}
\end{cor}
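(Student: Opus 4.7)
Cases 1) and 2) are immediate specializations of parts 1) and 2) of Theorem~\ref{thmYounglaw}, which impose no restriction on~$a_1$. For case 3), Theorem~\ref{thmYounglaw} already yields $\vartheta\in(0,\pi)$ together with the relation $\widehat{\vartheta}=\pi-\vartheta$, where, by Proposition~\ref{cancellazione}, $\widehat{\vartheta}$ is the unique angle in $(0,2\pi)$ solving $\D(\widehat{\vartheta})=0$. The plan is therefore to verify that, when $a_1$ is a positive constant, the equation $\D(\pi-\vartheta)=0$ admits the unique solution $\vartheta=\pi/2$ in $(0,\pi)$.

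Writing $a_1\equiv c>0$, the definition \eqref{D} presents $\D(\pi-\vartheta)$ as the principal-value difference of the integrals of $c\,\xt^{-n-s_1}$ over $J_{\vartheta,\pi}$ and over $J_{0,\vartheta}$. I will exploit the isotropy of the integrand: the reflection $x_1\mapsto -x_1$ is an isometry that maps $J_{\vartheta,\pi}$ to $J_{0,\pi-\vartheta}$ and sends $e(\vartheta)$ to $e(\pi-\vartheta)$, and composing with a suitable rotation in the $(x_1,x_n)$-plane in each integral brings both singularities to $e_1$. This reduces the problem to the principal-value identity
\[
\int_{J_{\vartheta-\pi,0}}\frac{dx}{|x-e_1|^{n+s_1}}=\int_{J_{-\vartheta,0}}\frac{dx}{|x-e_1|^{n+s_1}},
\]
where $J_{\alpha,\beta}$ is interpreted with angular variables read modulo $2\pi$. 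Near $e_1$, both cones coincide locally with the same half-space $\{x_n<0\}$, so the principal-value subtraction cancels the singular parts.

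For $\vartheta\in(0,\pi/2)$ one has $\vartheta-\pi<-\vartheta<0$, hence $J_{\vartheta-\pi,0}\supsetneq J_{-\vartheta,0}$ and the difference equals the convergent, strictly positive quantity $\int_{J_{\vartheta-\pi,-\vartheta}}|x-e_1|^{-n-s_1}\,dx$, the domain being at positive distance from $e_1$. Symmetrically, $\D(\pi-\vartheta)<0$ for $\vartheta\in(\pi/2,\pi)$, while at $\vartheta=\pi/2$ the two cones coincide and the difference vanishes. The uniqueness in Proposition~\ref{cancellazione} then forces $\vartheta=\pi/2$. The main technical point to monitor is the consistency of the principal-value removal through the change of variables: the reflection and rotations transform $B_\e(e(\vartheta))$ into $B_\e(e_1)$ in both integrals, so the symmetric excision intrinsic to the definition of $\D$ is preserved and the limiting argument is legitimate.
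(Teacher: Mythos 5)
Your handling of cases 1) and 2) matches the paper exactly: both follow at once from Theorem~\ref{thmYounglaw}, items 1) and 2), with no use of the hypothesis $a_1\equiv\mathrm{const}$. For case 3) you take a genuinely different route, and it is correct. The paper's argument is shorter: with $a_1$ constant, the reflection across the ray at polar angle~$\vartheta$ in the $(x_1,x_n)$-plane fixes~$e(\vartheta)$ and interchanges~$J_{0,\vartheta}$ with~$J_{\vartheta,2\vartheta}$ isometrically, so $\mathcal{D}_\vartheta(\vartheta)=0$ immediately; by the uniqueness claim of Proposition~\ref{cancellazione} this gives $\widehat{\vartheta}=\vartheta$, and matching it against $\widehat{\vartheta}=\pi-\vartheta$ from Theorem~\ref{thmYounglaw}~3) yields $\vartheta=\pi/2$ in one line. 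You instead determine the sign of $\mathcal{D}_\vartheta(\pi-\vartheta)$ as a function of~$\vartheta$, via the reflection $x_1\mapsto -x_1$ followed by rotations moving both singularities to~$e_1$; the resulting set inclusion between $J_{\vartheta-\pi,0}$ and $J_{-\vartheta,0}$ gives strict positivity for $\vartheta<\pi/2$, strict negativity for $\vartheta>\pi/2$, and vanishing at $\vartheta=\pi/2$. Combined with $\mathcal{D}_\vartheta(\pi-\vartheta)=0$ from Theorem~\ref{thmYounglaw}~3), this pins down $\vartheta=\pi/2$. Your computation is longer than the paper's symmetry observation but yields a strict sign, which is slightly more information. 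One small misattribution at the end: your closing sentence credits ``the uniqueness in Proposition~\ref{cancellazione}'' for forcing $\vartheta=\pi/2$, but by that point you have already proved $\mathcal{D}_\vartheta(\pi-\vartheta)\ne 0$ whenever $\vartheta\ne\pi/2$, so the conclusion follows from your sign analysis together with Theorem~\ref{thmYounglaw}~3) alone, without appealing to the uniqueness of~$\widehat{\vartheta}$.
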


We exhibit below the nonlocal Young's law in the case $s_1=s_2$, which was left out of Theorem~\ref{thmYounglaw}.

\begin{thm}\label{thmYounglawa1a2nonconst}
Let~$s\in(0,1)$ and~$K_1$, $K_2\in\mathbf{K}^2(n,s,\lambda,\varrho)$.
Suppose that $K_1$, $K_2$ admit
blow-up limits~$K_1^\ast$, $K_2^\ast$
(according to~\eqref{Kast}) that satisfy assumption~\eqref{adsyrytrjgtjhg697867}. 
Assume that there exists~$\varepsilon_0\in(0,1)$ such that
\begin{equation}\label{magvalimpo}
|\sigma|\, K_2(\zeta) \le (1-\varepsilon_0) \,K_1(\zeta) \qquad{\mbox{for all }}\zeta\in B_{\varepsilon_0}\setminus\{0\}.
\end{equation}

Let $g\in C^1(\R^n)$. Let~$\Omega$ be an open bounded set with $C^1$-boundary and $E$ be a volume-constrained critical set of $\mathcal{C}$. 

Let $x_0\in \mathrm{Reg}_E\cap \partial\Omega$ and suppose that $H$ and $V$ are open
half-spaces such that
\begin{equation*}
\Omega^{x_0,r}\rightarrow H\qquad {\mbox{and}}\qquad
E^{x_0,r}\rightarrow H\cap V\quad {\mbox{in }} L^1_\mathrm{loc}(\R^n)
\;{\mbox{as }}r\rightarrow 0^+.
\end{equation*}
Let also~$\vartheta\in [0,\pi]$ be the angle between
the half-spaces $H$ and $V$, that is $H\cap V=J_{0,\vartheta}$ in the notation of~\eqref{Jtheta1theta2},
and let~$\nu_E(x_0):=\nu_V(0)$.

Then, we have that~$\vartheta\in(0,\pi)$ and, for all $v\in H\cap \partial V$,
\begin{equation}\label{Ylaw}
\mathbf{H}^{K_1^\ast}_{\partial(H\cap V)}(v)-\int_{H^c}K_1^\ast (v-z)\, dz+\sigma\int_{H^c}K_2^\ast (v-z)\,dz=0.
\end{equation}
\end{thm}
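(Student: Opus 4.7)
The plan is to derive \eqref{Ylaw} by rescaling the Euler-Lagrange equation \eqref{E-Ls} along a sequence of interior regular points of $E$ converging to $x_0$, and to rule out the degenerate endpoints $\vartheta\in\{0,\pi\}$ via an energy-competitor argument driven by the smallness hypothesis \eqref{magvalimpo}.

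Concretely, since $x_0\in\mathrm{Reg}_E\cap\partial\Omega$, the boundary $\partial E$ is of class $C^{1,\alpha}$ with $\alpha>s$ near $x_0$; hence, given any target point $v\in H\cap\partial V$, one can select $r_\varepsilon\searrow 0^+$ and $x_\varepsilon=x_0+r_\varepsilon v_\varepsilon\in\Omega\cap\mathrm{Reg}_E$ with $v_\varepsilon\to v$, and write the Euler-Lagrange identity \eqref{E-Ls} at $x_\varepsilon$. Multiplying by $r_\varepsilon^s$ and changing variables $y=x_0+r_\varepsilon z$, the scaling $r_\varepsilon^{n+s}K_i(r_\varepsilon \xi)\to K_i^\ast(\xi)$ coming from \eqref{Kast}--\eqref{adsyrytrjgtjhg697867}, the upper bound in \eqref{stimakernel} as a dominating function, and the $L^1_{\mathrm{loc}}$-convergences $\Omega^{x_0,r_\varepsilon}\to H$ and $E^{x_0,r_\varepsilon}\to H\cap V$ allow one to pass to the limit by dominated convergence term-by-term. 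The lower-order contributions $r_\varepsilon^s g(x_\varepsilon)$ and $r_\varepsilon^s c$ vanish in the limit, leaving precisely \eqref{Ylaw}. The principal-value interpretation of the blow-up of $\mathbf{H}^{K_1}_{\partial E}(x_\varepsilon)$ is preserved by symmetrizing around $v_\varepsilon$ and using the $C^{1,\alpha}$-regularity of $\partial E$ near $x_0$, together with the derivative bound \eqref{stimaderivatakernel}, to produce a uniform-in-$\varepsilon$ integrable majorant near the diagonal.

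The main obstacle is establishing that $\vartheta\in(0,\pi)$ in the first place, since the rescaled identity above is informative only for a genuinely wedge-shaped blow-up (note that, when $\vartheta\in\{0,\pi\}$, the set $H\cap\partial V$ on which \eqref{Ylaw} is asserted is empty). In Theorem \ref{thmYounglaw} the different homogeneities $s_1\ne s_2$ forced one kernel to dominate at small scales and thereby dictated the value of $\vartheta$, but with $s_1=s_2$ both interactions scale identically and the smallness \eqref{magvalimpo} must be exploited quantitatively. I would argue by contradiction: assuming $\vartheta=0$ (respectively $\vartheta=\pi$), construct a small volume-preserving perturbation $\widetilde E$ of $E$ supported in a ball around $x_0$ of radius comparable to $\varepsilon_0$ which opens (respectively closes) the contact direction by a fixed amount; using the strict margin $(1-\varepsilon_0)$ in \eqref{magvalimpo}, the favorable variation of $I_1(E,E^c\cap\Omega)$ strictly outweighs the unfavorable variation of $|\sigma|\,I_2(E,\Omega^c)$ on the relevant scale, yielding $\mathcal{C}(\widetilde E)<\mathcal{C}(E)$ and contradicting the criticality of $E$. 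This competitor construction is an anisotropic adaptation of the analogous argument carried out in the isotropic setting of \cite{MR3717439}, and its implementation is the most delicate part of the proof.
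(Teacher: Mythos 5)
Your first step (rescaling the Euler--Lagrange equation \eqref{E-Ls} at interior regular points $x_k\to x_0$, multiplying by $r_k^s$, and passing to the blow-up limit by dominated convergence) is exactly the paper's argument, and you correctly anticipate that the constant $c$ and the bulk term $g$ are wiped out by the factor $r_k^s$.

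The gap is in your mechanism for excluding $\vartheta\in\{0,\pi\}$. You propose to build a volume-preserving competitor $\widetilde E$ with $\mathcal{C}(\widetilde E)<\mathcal{C}(E)$ and to read this as a contradiction. But the theorem only assumes that $E$ is a volume-constrained \emph{critical point}, not a minimizer; a strict energy decrease along a finite perturbation does not contradict criticality (a saddle or a local maximum of the constrained functional is perfectly compatible with having nearby competitors of lower energy). The conclusion $\vartheta\in(0,\pi)$ must therefore be extracted from the Euler--Lagrange identity itself. This is what the paper does: building on the reflection argument of Lemma~\ref{lemmaangoletti2}, it proves Lemma~\ref{lemmaangoletti3}, which shows that if $\vartheta\in\{0,\pi\}$ then
\[
r_k^{s}\left[\mathbf{H}^{K_1}_{\partial E}(x_k)-\int_{\Omega^c}K_1(x_k-y)\,dy+\sigma\int_{\Omega^c}K_2(x_k-y)\,dy\right]\longrightarrow\pm\infty,
\]
which is incompatible with the rescaled Euler--Lagrange limit being $0$. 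The hypothesis \eqref{magvalimpo} enters precisely here: after the reflection map $Y(z)=2v_k-z$ absorbs the thin transition slab $\mathcal{Q}_k$, the residual integrand is $K_1-|\sigma|K_2$, and the margin $(1-\varepsilon_0)$ guarantees that this is bounded below by $\varepsilon_0 K_1$ on the relevant small ball, so the residual term is forced to diverge like $\delta_k^{-s}$. Your intuition that the $(1-\varepsilon_0)$ margin is what makes the internal term dominate is correct, but it must be implemented as a lower bound inside the rescaled curvature integral, not as a global energy comparison. (Competitor-based arguments of the type you describe do appear in the paper, but only in Theorems~\ref{ESEMPIOPTO} and~\ref{ESEMPIOPTO:CO}, which explicitly assume minimality.)
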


Even in the very special situation in which~$K_1(\zeta)=K_2(\zeta)=\frac1{|\zeta|^{n+s}}$,
Theorem~\ref{thmYounglawa1a2nonconst}
here can be seen as a strengthening of Theorem~1.4 in~\cite{MR3717439}
(and, in particular, of formula~(1.24) there): indeed, the result here establishes
explicitly the nondegeneracy of the contact angle~$\vartheta$ by proving that~$\vartheta\in(0,\pi)$.\medskip

We point out that the case~$\sigma=0$
makes indistinguishable the setting~$s_1=s_2$ from that of~$s_1\ne s_2$:
consistently with this, we observe that the contact angle prescription when~$s_1=s_2$,
as given in~\eqref{Ylaw}, reduces to~\eqref{Y:LS}
when additionally~$\sigma=0$.\medskip

Also, we remark that when~$\sigma=0$ condition~\eqref{magvalimpo} is automatically satisfied. Furthermore, when~$K_1=K_2$, condition~\eqref{magvalimpo} reduces to~$\sigma\in(-1,1)$, which is precisely the assumption taken in~\cite{MR3717439}.\medskip

Besides, we think that the detection of a contact angle in a nonlocal capillarity setting
is an interesting feature in itself, especially when we compare this situation
with the stickiness phenomenon for the nonlocal minimal surfaces, as discovered in~\cite{MR3596708}.
More specifically, for nonlocal minimal surfaces, the long-range interactions
make it possible for the surface to stick to a domain (even if the domain is smooth and convex),
thus changing dramatically the boundary analysis (moreover, this phenomenon is essentially ``generic'', see~\cite{MR4104542}). The possible detection of the contact
angle for the nonlocal capillarity theory instead highlights the fact that the boundary analysis of this theory is somewhat ``sufficiently robust'' with respect to the classical case.
Roughly speaking, we believe that this important difference between nonlocal minimal surfaces
and nonlocal capillarity theory is due to the fact that in the latter the mass is always placed in a bounded
region, whence the energy contributions coming from infinity have a different nature than the ones occurring
for nonlocal perimeter functionals.\medskip

We also stress that conditions~\eqref{magvalimpo2} and~\eqref{magvalimpo} basically state that if the kernel~$K_2$ is ``too strong'', then one cannot expect nontrivial minimizers. Roughly speaking,
while Proposition~\ref{existenceofminimizer} always guarantees the existence
of a minimizer, when conditions~\eqref{magvalimpo2} and~\eqref{magvalimpo}
are violated the minimizer can ``detach from
the boundary'' (or ``completely stick to the boundary''),
hence the notion of contact angle becomes degenerate or void.
That is, while for the existence of minimizers
we do not need to require any bound on the
relative adhesion coeﬃcient~$\sigma$ in dependence of the interaction kernels,
to speak about a contact angle some quantitative condition is in order (roughly speaking,
otherwise the droplet does not meet the boundary of the container
with a nontrivial angle, rather preferring to either detach from the container
and float, or to completely stick at the boundary
by surrounding it).

\begin{figure}[h]
\includegraphics[width=0.35\textwidth]{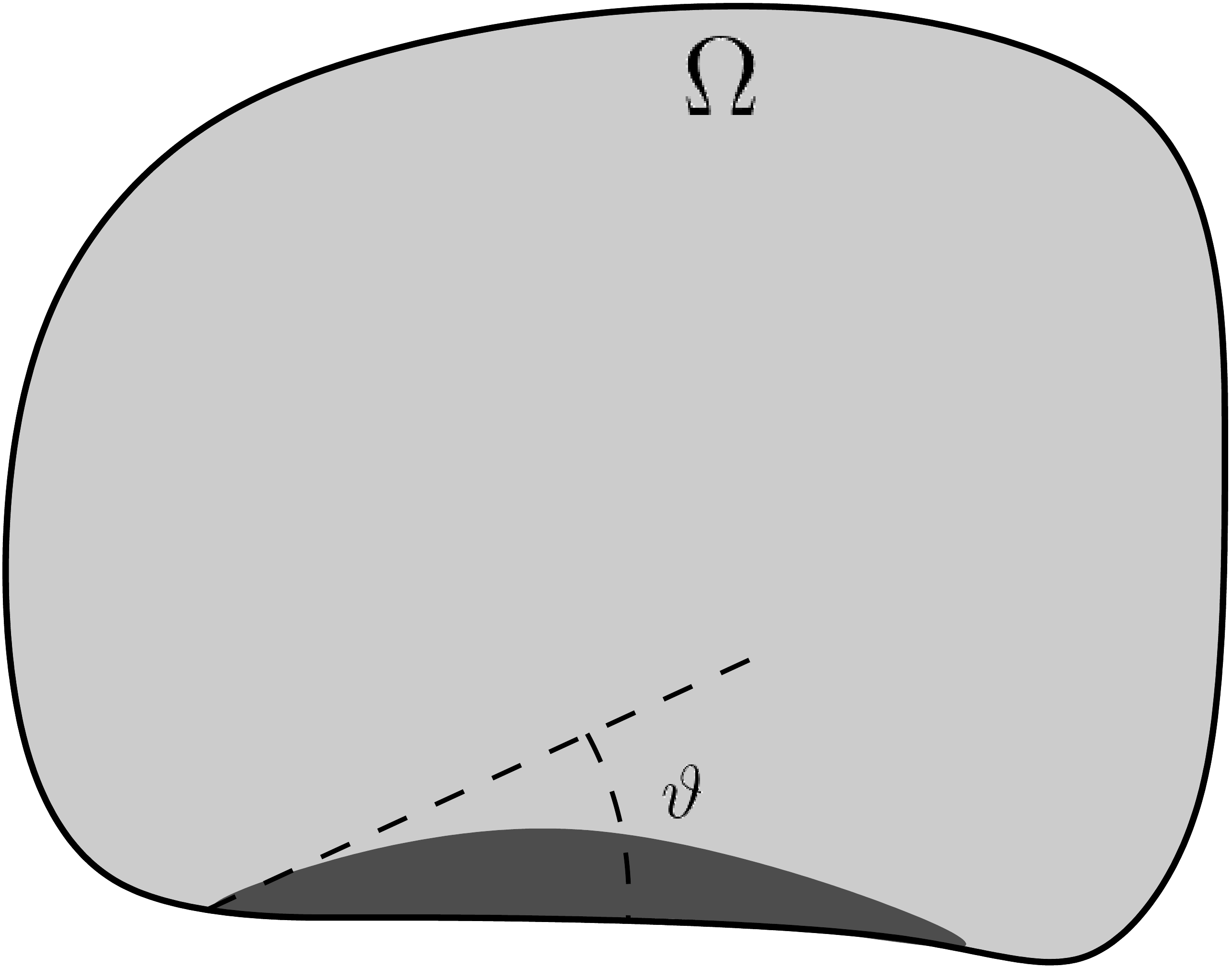}
\caption{The configuration in which the droplet tends to stick to the container.}
        \label{PICCOLO}
\end{figure}

The configuration in which the droplet tends to 
be squashed on the container,
thus producing a contact angle~$\vartheta$ close to zero, 
is sketched in Figure~\ref{PICCOLO}. The opposite situation in which the droplet tends to detach from the container, thus producing a contact angle~$\vartheta$ close to~$\pi$, is depicted in Figure~\ref{GRANDE}.

\begin{figure}[h]
\includegraphics[width=0.45\textwidth]{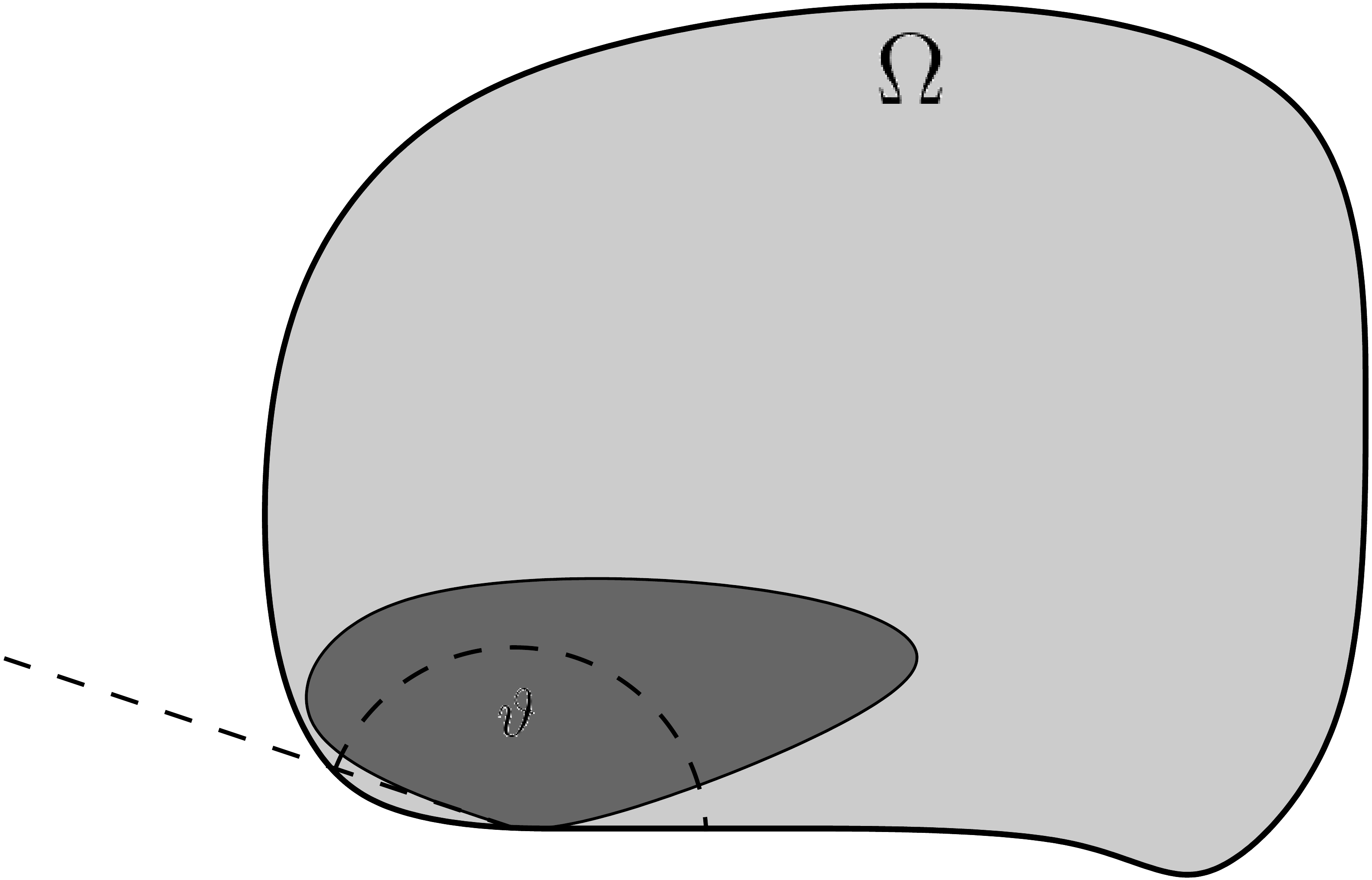}
\caption{The configuration in which the droplet tends to detach from the container.}
        \label{GRANDE}
\end{figure}

These concepts are made explicit in the following exemplifying observations:

\begin{thm}\label{ESEMPIOPTO}
Let~$\sigma>0$, $\Omega:=B_1$, $g:=0$, $K_1(\xi):=\frac{k_1}{|\xi|^{n+s_1}}$ and~$K_2(\xi):=\frac{k_2}{|\xi|^{n+s_2}}$, for some~$k_1$, $k_2>0$.

Let~$E$ be a volume-constrained minimizer of~$\mathcal{C}$. Assume that there exist~$p\in\partial B_1$ and~$\e_0>0$ such that~$B_{\e_0}(p)\cap B_1\subseteq E$. Assume also that~$\mathrm{Reg}_E\cap\Omega\ne\varnothing$.

Then, either~$s_1>s_2$, or~$s_1=s_2$ and~$k_1>\sigma k_2$. 
\end{thm}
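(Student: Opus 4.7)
\emph{Strategy.} I argue by contradiction: assume we are in the complementary regime, that is, either $s_1<s_2$, or $s_1=s_2$ with $k_1\le\sigma k_2$, and construct a volume-preserving competitor $E_\delta$ with $\mathcal{C}(E_\delta)<\mathcal{C}(E)$ for $\delta$ small, contradicting the minimality of $E$. After a rotation, take $p=e_n$, and for $\delta\in(0,\varepsilon_0/2)$ set $A_\delta := B_\delta(p)\cap B_1$, which is contained in $E$ by the sticking hypothesis. Fix $x^\ast\in \mathrm{Reg}_E\cap\Omega$ (nonempty by assumption) and let $B_\delta'\subseteq\Omega\setminus E$ be a smooth tubular extension of $\partial E$ near $x^\ast$ with $|B_\delta'|=|A_\delta|$. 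Put $E_\delta := (E\setminus A_\delta)\cup B_\delta'$.

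\emph{Main asymptotic.} A blow-up at $p$, with $\xi=(x-p)/\delta$ and $\eta=(y-p)/\delta$, sends $(A_\delta-p)/\delta\to B_1^-:=B_1\cap H$ and $(\Omega-p)/\delta\to H:=\{y_n<0\}$, and yields, by a direct change of variable,
\[
I_1(A_\delta,E\setminus A_\delta) \;=\; k_1 P_1^\ast \, \delta^{n-s_1}(1+o(1)), \qquad
I_2(A_\delta,\Omega^c) \;=\; k_2 Q_2^\ast \, \delta^{n-s_2}(1+o(1)),
\]
where
\[
P_1^\ast := \int_{B_1^-}\int_{H\cap B_1^c}\frac{d\xi\,d\eta}{|\xi-\eta|^{n+s_1}}, \qquad
Q_2^\ast := \int_{B_1^-}\int_{H^c}\frac{d\xi\,d\eta}{|\xi-\eta|^{n+s_2}}
\]
are finite and strictly positive. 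Since $\mathrm{dist}(A_\delta,E^c\cap\Omega)\ge\varepsilon_0-\delta$ by the sticking assumption, we have $I_1(A_\delta,E^c\cap\Omega)=O(\delta^n)$; and since $x^\ast$ is a regular point of $\partial E$, Proposition~\ref{teoremaEL} gives that the contribution of the smooth tubular extension $B_\delta'$ is $c|B_\delta'|+O(\delta^{2n})=O(\delta^n)$, where $c$ is the Lagrange multiplier. Combining all contributions (and using $g\equiv 0$),
\[
\mathcal{C}(E_\delta)-\mathcal{C}(E) \;=\; k_1 P_1^\ast\delta^{n-s_1} \;-\; \sigma k_2 Q_2^\ast\delta^{n-s_2} \;+\; O(\delta^n).
\]

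\emph{Conclusion.} If $s_1<s_2$, then $n-s_2<n-s_1$ so the term of order $\delta^{n-s_2}$ dominates as $\delta\to 0^+$, with coefficient $-\sigma k_2 Q_2^\ast<0$ (recall $\sigma,k_2,Q_2^\ast>0$); hence $\mathcal{C}(E_\delta)<\mathcal{C}(E)$ for $\delta$ small, contradicting minimality. If instead $s_1=s_2=s$ and $k_1\le\sigma k_2$, the $\delta^{n-s}$-coefficient is $k_1 P_1^\ast-\sigma k_2 Q_2^\ast$. A direct 1-dimensional computation gives $P_1^\ast=Q_2^\ast=1/(s(1-s))$, and in general one compares the two integrals by reflection across the hyperplane $\{\xi_n=0\}$, together with a suitable choice of $B_\delta'$ at $x^\ast$, to conclude that this leading coefficient is $\le 0$; the strict decrease then comes from the negative $O(\delta^n)$ contribution $-I_1(A_\delta,E^c\cap\Omega)$, which is nonzero because $|E^c\cap\Omega|>0$ (as $m<|\Omega|$). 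Either way, the minimality of $E$ is violated.

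\emph{Main obstacle.} The delicate point is the bookkeeping of the $O(\delta^n)$ correction in the borderline subcase $s_1=s_2$ with $k_1=\sigma k_2$, where the leading $\delta^{n-s}$ contributions exactly cancel and one must carefully verify that the residual negative term from $-I_1(A_\delta,E^c\cap\Omega)$ is not overwhelmed by the Lagrange contribution $c|B_\delta'|$ coming from the compensation $B_\delta'$; this can be arranged by choosing $x^\ast$ and the direction of the tubular extension appropriately. By contrast, the case $s_1<s_2$ is handled cleanly: the adhesion-release term at scale $\delta^{n-s_2}$ strictly beats the perimeter-creation term at scale $\delta^{n-s_1}$.
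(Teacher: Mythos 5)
Your overall strategy (remove mass near $p$, add compensating mass at a regular interior point $x^\ast$, compare energies) is the same as the paper's, but the proposal has a genuine gap at its core: the claim that the energy contribution of the compensating set $B_\delta'$ is $c\,|B_\delta'|+O(\delta^{2n})=O(\delta^n)$. The Euler--Lagrange identity~\eqref{E-Ls} is a \emph{first-order} condition, and the error in a finite perturbation is governed by the second variation of the nonlocal perimeter, which does not scale like $(\text{volume})^2$. In fact, as the paper points out in the paragraph preceding Lemma~\ref{OEHDpimnSBlkOGFS-012rikjmg}, a set of volume $\sim\delta^n$ generically produces a change of nonlocal perimeter of order $(\delta^n)^{(n-s_1)/n}=\delta^{n-s_1}$, i.e.\ the \emph{same} order as the leading term you are trying to estimate. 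Making the compensating contribution subleading is precisely what requires the thin-graph cancellation Lemma~\ref{OEHDpimnSBlkOGFS-012rikjmg}, and even with that lemma the paper is forced to remove a thin \emph{shell} $\mathcal{B}$ of lateral size $\e$ and thickness $\delta\e$ (volume $\sim\delta\e^n$), rather than a full spherical cap of radius $\delta$, and then take a double limit $\e\searrow 0$ followed by $\delta\searrow0$. With your thick cap $A_\delta=B_\delta(p)\cap B_1$ and a single parameter $\delta$, the error estimate for $B_\delta'$ does not fall below $\delta^{n-s_1}$ in general, and the inequality you want does not follow.

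A second gap is the assertion $P_1^\ast=Q_2^\ast$ when $s_1=s_2$: you verify it only in $n=1$ and then invoke ``reflection across $\{\xi_n=0\}$,'' but this reflection does not leave the kernel $|\xi-\eta|^{-n-s}$ invariant when only $\eta$ is reflected, and when both $\xi,\eta$ are reflected it maps $Q_2^\ast$ to the interaction of $B_1\cap H^c$ with $H$, which is not $P_1^\ast$. Since the theorem's sharp threshold is $k_1>\sigma k_2$, your argument would actually produce $k_1 P_1^\ast\ge\sigma k_2 Q_2^\ast$, which is the wrong inequality unless these constants coincide; the paper avoids this entirely because the thin-shell blow-up turns both the ``interior'' and ``exterior'' interactions into the \emph{same} slab-versus-half-space quantity computed in Lemma~\ref{LE:AUSJM-1aux1}, so the constants $c_\star$ match automatically. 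Finally, in the borderline case $s_1=s_2$, $k_1=\sigma k_2$, your reliance on the sign of the $O(\delta^n)$ remainder versus the Lagrange term $c\,|B_\delta'|$ is not controllable (you cannot choose the sign or size of $c$); the paper instead observes that in this regime $\mathcal{C}$ reduces to $k_1$ times the full fractional perimeter, so minimizers are balls by~\cite{MR2469027}, which is incompatible with the hypothesis $B_{\e_0}(p)\cap B_1\subseteq E$ for $p\in\partial B_1$.
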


\begin{thm}\label{ESEMPIOPTO:CO}
Let~$\sigma<0$, $\Omega:=B_1$, $g:=0$, $K_1(\xi):=\frac{k_1}{|\xi|^{n+s_1}}$ and~$K_2(\xi):=\frac{k_2}{|\xi|^{n+s_2}}$, for some~$k_1$, $k_2>0$.

Let~$E$ be a volume-constrained minimizer of~$\mathcal{C}$. Assume that there exist~$p\in\partial B_1$ and~$\e_0>0$ such that~$B_{\e_0}(p)\cap B_1\subseteq(\Omega\setminus E)$.
Assume also that~$\mathrm{Reg}_E\cap\Omega\ne\varnothing$.

Then, either~$s_1>s_2$, or~$s_1=s_2$ and~$-k_1<\sigma k_2$. 
\end{thm}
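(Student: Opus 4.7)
The plan is to argue by contradiction. Suppose the conclusion fails: then $s_1\le s_2$, and in the equality case $-k_1\ge\sigma k_2$, i.e., $|\sigma|k_2\ge k_1$ since $\sigma<0$. I shall build a volume-preserving competitor $\tilde E$ satisfying $\mathcal{C}(\tilde E)<\mathcal{C}(E)$, contradicting minimality.

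Since $\mathrm{Reg}_E\cap\Omega\ne\varnothing$, pick $x_*\in\mathrm{Reg}_E\cap\Omega$; by the $C^{1,\alpha}$-regularity of $\partial E$ near $x_*$, for every sufficiently small $\rho>0$ there is an interior ball $A_1=B_\rho(q_1)\subset E$ at a positive distance from $\partial E\cup\partial\Omega$. To exploit the detachment region, pick $q_2:=(1-\tau)p$ so that $\mathrm{dist}(q_2,\partial\Omega)=\tau$; for $\tau<2\varepsilon_0/3$ and $\rho\le\tau/4$, the ball $A_2:=B_\rho(q_2)$ satisfies $A_2\subset B_{\varepsilon_0}(p)\cap B_1\subseteq\Omega\setminus E$. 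Setting $\tilde E:=(E\setminus A_1)\cup A_2$ preserves volume, and a direct decomposition using the symmetry of $K_1$ yields
\begin{align*}
\Delta\mathcal{C}=\;&I_1(E\setminus A_1,A_1)+I_1(A_2,(\Omega\setminus E)\setminus A_2)\\
&-I_1(E\setminus A_1,A_2)-I_1(A_1,(\Omega\setminus E)\setminus A_2)\\
&+\sigma\bigl[I_2(A_2,\Omega^c)-I_2(A_1,\Omega^c)\bigr].
\end{align*}
Blowing up at $q_1$ and at $q_2$, the two ``surrounded-ball'' contributions each equal $k_1 P^*(n,s_1)\,\rho^{n-s_1}(1+o(1))$, with $P^*(n,s_1):=\int_{B_1}\int_{\R^n\setminus B_1}|X-Y|^{-n-s_1}\,dY\,dX$; the two ``cross'' interactions are $O(\rho^n)$ because their source and target sets lie at a uniformly positive distance. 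A half-space expansion of $\Omega^c=\R^n\setminus B_1$ near $q_2$ yields $I_2(A_2,\Omega^c)=k_2|B_1|H(n,s_2)\,\rho^n\tau^{-s_2}(1+o(1))$ with $H(n,s_2):=\int_{\{w_n>1\}}|w|^{-n-s_2}\,dw$, while $I_2(A_1,\Omega^c)=O(\rho^n)$. Hence
\[
\Delta\mathcal{C}=2k_1 P^*(n,s_1)\,\rho^{n-s_1}+\sigma k_2 |B_1| H(n,s_2)\,\rho^n\tau^{-s_2}+\text{lower-order terms}.
\]

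In the case $s_1<s_2$, set $\rho=\tau^\beta$ with $1<\beta<s_2/s_1$ (a nonempty range). Then $\beta(n-s_1)>\beta n-s_2$, so the positive first term is of strictly lower order than the second (which is negative since $\sigma<0$) as $\tau\to 0^+$; hence $\Delta\mathcal{C}<0$ for $\tau$ small enough, contradicting the minimality of $E$.

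The main obstacle is the equality case $s_1=s_2=:s$: both terms scale as $\rho^{n-s}$ and the sign depends on the precise constant. With $\rho=c\tau$ the leading coefficient is $c^n[2k_1P^*(n,s)c^{-s}+\sigma k_2|B_1|H(n,s)]$, which is negative only when $|\sigma|k_2$ exceeds a specific geometric multiple of $k_1$, not necessarily $k_1$ itself. To reach the sharp threshold $|\sigma|k_2\ge k_1$ claimed by the theorem, I would replace the ball $A_2$ by a thin layer of thickness $\sim\tau$ adhering to a macroscopic portion of $\partial B_1\cap B_{\varepsilon_0}(p)$, so that the perimeter cost arises only from the inner interface and the half-space integral $H$ balances against $P^*$ in the correct geometric ratio. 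Equivalently, since $|\sigma|k_2\ge k_1$ is exactly the failure of condition~\eqref{magvalimpo}, one can couple the Euler--Lagrange identity~\eqref{E-Ls} at $x_*$ with a boundary blow-up at a point of $\overline{\partial E}\cap\partial\Omega$ (such a point must exist because for $\sigma<0$ a compactly contained configuration can always be pushed toward $\partial\Omega$ to strictly decrease the energy) and observe that the cancellation forced by Young's law~\eqref{Ylaw} of Theorem~\ref{thmYounglawa1a2nonconst} admits no admissible contact angle in this regime, thereby completing the contradiction.
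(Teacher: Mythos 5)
Your proposal departs from the paper's route and is incomplete at exactly the point where the threshold $-k_1<\sigma k_2$ must be made sharp. The paper's own proof of this theorem is a one-paragraph complementation: writing $\mathcal{C}_\sigma$ for the functional, one checks that $\mathcal{C}_\sigma(\Omega\setminus F)=\mathcal{C}_{-\sigma}(F)+\sigma I_2(\Omega,\Omega^c)$, so that $\widetilde E:=\Omega\setminus E$ is a volume-constrained minimizer of $\mathcal{C}_{-\sigma}$; since $B_{\e_0}(p)\cap B_1\subseteq\Omega\setminus E=\widetilde E$ and $-\sigma>0$, the whole statement falls out of Theorem~\ref{ESEMPIOPTO} applied to $\widetilde E$. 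All of the genuinely delicate work (the thin-set construction via Lemma~\ref{OEHDpimnSBlkOGFS-012rikjmg}, the rescaling to a half-space, the $\delta\searrow0$ limit) is done once in Theorem~\ref{ESEMPIOPTO}, and this theorem is a formal corollary of it. You do not use this reduction and instead try to rebuild the construction from scratch for $\sigma<0$.

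What you do carry out correctly is the case $s_1<s_2$: moving a small ball $A_1\subset E$ to a ball $A_2$ at distance $\tau$ from $\partial B_1$ inside the detached region, with $\rho=\tau^\beta$ and $1<\beta<s_2/s_1$, does produce a volume-preserving competitor with $\Delta\mathcal{C}<0$, because the attractive term $\sigma k_2|B_1|H(n,s_2)\rho^n\tau^{-s_2}\sim\tau^{\beta n-s_2}$ dominates the perimeter cost $\sim\tau^{\beta(n-s_1)}$. (A minor remark: your asymptotics for $I_1(A_2,(\Omega\setminus E)\setminus A_2)$ silently absorb a term $-k_1|B_1|H(n,s_1)\rho^n\tau^{-s_1}$; this is indeed $o(\rho^{n-s_1})$ when $\beta>1$, but it is worth stating, since when $\beta=1$ it is comparable.)

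The gap is the equality case $s_1=s_2=:s$ with $|\sigma|k_2\ge k_1$. You acknowledge that the ball-swap only gives a contradiction when $|\sigma|k_2$ exceeds $k_1$ by a fixed geometric factor (governed by $P^*$ and $H$), and you sketch two alternatives without carrying either out. The first (replacing $A_2$ by a thin layer hugging $\partial B_1$) is the right idea and is essentially the paper's construction inside Theorem~\ref{ESEMPIOPTO}, but it requires the quantitative cancellation estimates of Lemmas~\ref{LE:AUSJM-1aux1}, \ref{LE:AUSJM-1aux2} and~\ref{OEHDpimnSBlkOGFS-012rikjmg}, the auxiliary displaced set $\mathcal{B}'$ at a regular interior point, and the two-step limit $\e\searrow0$ then $\delta\searrow0$ — none of which is supplied. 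The second alternative, invoking the Euler--Lagrange identity~\eqref{E-Ls} together with Theorem~\ref{thmYounglawa1a2nonconst}, is circular: that theorem requires condition~\eqref{magvalimpo}, which in this isotropic case reads $|\sigma|k_2<k_1$ and is precisely what you are trying to prove. Moreover, the incidental claim that ``a compactly contained configuration can always be pushed toward $\partial\Omega$ to strictly decrease the energy,'' hence a contact point exists, is not justified. As written, the proof does not close in the borderline case, which is the case that pins down the announced threshold; without it the statement is not established.
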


We now reformulate the condition of contact angle according to the following result:

\begin{prop}\label{chilosa22}
Let~$K_1^\ast$ and~$K_2^\ast$ be as in~\eqref{adsyrytrjgtjhg697867}.
Let~$\sigma\in\R$.
Assume that 
\begin{equation}\label{sjwicru348bt4v8b4576848poiuytrewqasdfghjkmnbvs}
{\mbox{either~$s_1=s_2$, or~$\sigma=0$.}}\end{equation}
Let~$H$ and~$V$ be open half-spaces and let~$\vartheta\in(0,\pi)$ 
be the angle between~$H$ and~$V$, that is~$H\cap V=J_{0,\vartheta}$ in the notation of~\eqref{Jtheta1theta2}. Let also~$\widehat{\vartheta}\in (0,2\pi)$ be as in~\eqref{cancellation}
with~$c:=0$

Suppose that there exists~$v\in H\cap\partial V$ such that
\begin{equation}\label{vfjoovvvv9990000043554647}
\mathbf{H}^{K_1^\ast}_{\partial(H\cap V)}(v)-\int_{H^c}K_1^\ast (v-z)\, dz+\sigma\int_{H^c}K_2^\ast (v-z)\,dz=0.
\end{equation}

Then, we have that~$\vartheta$ and~$\sigma$ satisfy the relation
\begin{equation}\label{sigmagenerale-0}\begin{split}
\int_{J_{\vartheta,\pi}}\frac{a_1(\overrightarrow{e(\vartheta)-x})}{|e(\vartheta)-x|^{n+s_1}}\, dx
-\int_{J_{0,\vartheta}}\frac{a_1(\overrightarrow{e(\vartheta)-x})}{|e(\vartheta)-x|^{n+s_1}}\, dx
+\sigma  \int_{H^c}\frac{a_2(\overrightarrow{e(\vartheta)-x})}{|e(\vartheta)-x|^{n+s_1}}\,dx=0.\end{split}
\end{equation}
\end{prop}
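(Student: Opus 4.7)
The plan is to manipulate the left-hand side of \eqref{vfjoovvvv9990000043554647} until only the geometric cones $J_{0,\vartheta}$, $J_{\vartheta,\pi}$ and the half-space $H^c$ appear, at which point \eqref{sigmagenerale-0} can be read off essentially by inspection.

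First, I reduce to the canonical base point $v = e(\vartheta)$. Any $v \in H\cap\partial V$ decomposes as $v = \tau\, e(\vartheta) + w$ with $\tau>0$ and $w$ in the $(n-2)$-dimensional edge $\partial H \cap \partial V$ (the case $n=2$, when $w=0$, being simpler). The translation $y \mapsto y-w$ preserves each of $H$, $V$, $H\cap V$ and $H^c$, and the subsequent dilation $y\mapsto y/\tau$ preserves each of the cones $J_{0,\vartheta}$, $J_{\vartheta,\pi}$ and of the half-spaces $H$, $V$. Using the homogeneity of $K_i^\ast$, these transformations multiply each $K_1^\ast$-integral by $\tau^{-s_1}$ and the $K_2^\ast$-integral by $\tau^{-s_2}$. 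Assumption \eqref{sjwicru348bt4v8b4576848poiuytrewqasdfghjkmnbvs} makes these factors match (when $s_1=s_2$) or the third term vanish (when $\sigma=0$), so a common factor can be pulled out and I may assume $v = e(\vartheta)$.

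Second, I expand the mean curvature. By \eqref{meancurvature},
$$
\mathbf{H}^{K_1^\ast}_{\partial(H\cap V)}(v) = \int_{(H\cap V)^c} K_1^\ast(v-y)\,dy - \int_{H\cap V} K_1^\ast(v-y)\,dy
$$
as a principal value at $v$. Since $v = e(\vartheta)$ has positive distance $\sin\vartheta$ from $H^c$, the p.v.\ truncation ball $B_\varepsilon(v)$ lies inside $H$ for small $\varepsilon$, so the contribution of $H^c$ is absolutely convergent. Using the disjoint decomposition $(H\cap V)^c = H^c \sqcup (H\cap V^c)$ and cancelling the $H^c$-terms yields
$$
\mathbf{H}^{K_1^\ast}_{\partial(H\cap V)}(v) - \int_{H^c} K_1^\ast(v-y)\,dy = \int_{H\cap V^c} K_1^\ast(v-y)\,dy - \int_{H\cap V} K_1^\ast(v-y)\,dy,
$$
still as a symmetric principal value. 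Since $H\cap V = J_{0,\vartheta}$ and $H \cap V^c = J_{\vartheta,\pi}$ by \eqref{Jtheta1theta2}, substituting $K_1^\ast(\zeta) = a_1(\overrightarrow{\zeta})/|\zeta|^{n+s_1}$, $K_2^\ast(\zeta) = a_2(\overrightarrow{\zeta})/|\zeta|^{n+s_2}$, and using the evenness \eqref{AIPA} to replace $a_i(\overrightarrow{v-y})$ with $a_i(\overrightarrow{e(\vartheta)-y})$, turns \eqref{vfjoovvvv9990000043554647} into
\begin{align*}
&\int_{J_{\vartheta,\pi}}\frac{a_1(\overrightarrow{e(\vartheta)-y})}{|e(\vartheta)-y|^{n+s_1}}\,dy - \int_{J_{0,\vartheta}}\frac{a_1(\overrightarrow{e(\vartheta)-y})}{|e(\vartheta)-y|^{n+s_1}}\,dy \\
&\qquad + \sigma \int_{H^c}\frac{a_2(\overrightarrow{e(\vartheta)-y})}{|e(\vartheta)-y|^{n+s_2}}\,dy = 0.
\end{align*}
Invoking \eqref{sjwicru348bt4v8b4576848poiuytrewqasdfghjkmnbvs} once more to replace $s_2$ by $s_1$ in the last denominator (trivially when $\sigma=0$, otherwise because $s_1=s_2$) gives exactly \eqref{sigmagenerale-0}.

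The main, essentially notational, obstacle is to keep the principal-value structure consistent across the set-theoretic rearrangement. Once one observes that $H^c$ sits at a positive distance from $v$, the singularity at $v$ remains confined to the interface between $H\cap V$ and $H\cap V^c$ along $\partial V$, so the p.v.\ prescription for the mean curvature passes unchanged to the difference of the two wedge integrals, and no further analytic subtlety is required.
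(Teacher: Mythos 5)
Your argument is correct and is essentially the same as the paper's: expand $\mathbf{H}^{K_1^\ast}_{\partial(H\cap V)}(v)$ via \eqref{meancurvature}, absorb the $H^c$ term using $(H\cap V)^c = H^c \sqcup (H\cap V^c)$, identify $H\cap V = J_{0,\vartheta}$ and $H\cap V^c = J_{\vartheta,\pi}$, and use homogeneity together with \eqref{sjwicru348bt4v8b4576848poiuytrewqasdfghjkmnbvs} to normalize to the base point $e(\vartheta)$ and to replace $s_2$ by $s_1$ in the last integral. The one small variation is that you perform the translation by $w\in\partial H\cap\partial V$ explicitly before dilating, which makes the reduction to $v=e(\vartheta)$ valid for all $v\in H\cap\partial V$ when $n\geq 3$, whereas the paper's change of variable $z=y/|v|$ tacitly places $v$ on the ray $\R_{>0}\,e(\vartheta)$; this is a slight gain in precision, not a different method. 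Incidentally, once you have reduced to $v=e(\vartheta)$, the appeal to the evenness \eqref{AIPA} is unnecessary, since $a_i(\overrightarrow{v-y})$ already equals $a_i(\overrightarrow{e(\vartheta)-y})$.
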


A topical question in view of Proposition~\ref{chilosa22}
is therefore to understand whether or not equation~\eqref{sigmagenerale-0}
identifies a unique contact angle~$\vartheta$. This is indeed the case, precisely under the natural condition in~\eqref{sjwicru348bt4v8b4576848poiuytrewqasdfghjkmnbvs}, according to the following result
in Theorem~\ref{CO:AN:CO}. To state it in full generality, it is convenient to introduce some notation.
Indeed, in the forthcoming computations, it comes in handy to reduce the problem
to a two-dimensional situation. For this, one revisits the setting in~\eqref{Jtheta1theta2}
by defining its two-dimensional projection onto the variables~$(x_1,x_n)$, namely one sets
\begin{equation}\label{Jtheta1theta2-Di2}
J_{\vartheta_1,\vartheta_2}^\star:=
\Big\{(x_1,x_n)\in \mathbb{R}^2\,:\, \exists\ \beta\in (\vartheta_1, \vartheta_2),\;
\rho>0\ \mathrm{such\ that\ }(x_1,x_n)=\rho(\cos\beta,\sin\beta) \Big\}.
\end{equation}
Let also~$e^\star(\vartheta):=(\cos\vartheta,\sin\vartheta)$ and, for every~$x=(x_1,x_2)\in\partial B_1\subseteq\R^2$
and~$j\in\{1,2\}$,
\begin{equation}\label{Jtheta1theta2-Di3}
a_j^\star(x):=\begin{cases}
a_j(x) & {\mbox{ if }}n=2,\\
\\
\displaystyle
\int_{\R^{n-2}}\frac{a_j\Big(\overrightarrow{
x_1 \,e_1+
x_2 \,e_n+|x|(0,\bar{y},0)}\Big)}{
\big(1+|\bar{y}|^2\big)^{\frac{n+s_j}2}
}\,d\bar{y}
& {\mbox{ if }}n\ge3.
\end{cases}
\end{equation}
Let also
\begin{equation}\label{Jtheta1theta2-Di4}
\phi_j(\vartheta):=a_j^\star(\cos\vartheta,\sin\vartheta).\end{equation}
We remark that, as a byproduct of~\eqref{AIPA},
\begin{equation}\label{AIPA2}
a_j^\star(x)=a^\star_j(-x)\qquad{\mbox{and}}\qquad\phi_j(\vartheta)=\phi_j(\pi+\vartheta).
\end{equation}

With this framework, we can state the existence and uniqueness result for
the contact angle equation as follows:

\begin{thm}\label{CO:AN:CO}
Let~$K_1^\ast$ and~$K_2^\ast$ be as in~\eqref{adsyrytrjgtjhg697867}.
Let~$\sigma\in\R$ and assume that~\eqref{sjwicru348bt4v8b4576848poiuytrewqasdfghjkmnbvs} holds true.

Then, there exists at most one~$\vartheta\in(0,\pi)$ satisfying
the contact angle condition in~\eqref{sigmagenerale-0}.

Furthermore, if 
\begin{equation}\label{0oujfn-29roh-32eirj-9034o5t-PK}
|\sigma|<\frac{\displaystyle\int_0^{\pi}\phi_1(\alpha)\,(\sin \alpha)^{s_1}\,d\alpha}{\displaystyle\int_0^{\pi}\phi_2(\alpha)\,(\sin \alpha)^{s_1}\,d\alpha}
\,,\end{equation}
then there exists a unique solution~$\vartheta\in(0,\pi)$ of~\eqref{sigmagenerale-0}.
\end{thm}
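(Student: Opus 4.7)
The plan is to reduce the contact angle equation (\ref{sigmagenerale-0}) to the one-dimensional integral equation
\[
\int_0^\vartheta\phi_1(\alpha)(\sin\alpha)^{s_1}\,d\alpha=\frac{I_1+\sigma I_2}{2},\qquad I_j:=\int_0^\pi\phi_j(\alpha)(\sin\alpha)^{s_1}\,d\alpha,
\]
from which the uniqueness statement follows by strict monotonicity of the left-hand side and the existence statement follows by the intermediate value theorem under the condition (\ref{0oujfn-29roh-32eirj-9034o5t-PK}), which translates to $0<(I_1+\sigma I_2)/2<I_1$.

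First I would integrate out the $n-2$ transverse variables $\tilde x=(x_2,\dots,x_{n-1})$. Since the cones $J_{\vartheta_1,\vartheta_2}$ and the half-space $H$ are invariant under translation in $\tilde x$ and $e(\vartheta)$ lies in the $(x_1,x_n)$-plane, the substitution $\bar y=\tilde x/r$, with $r$ the two-dimensional distance from $(x_1,x_n)$ to $e^\star(\vartheta)$, converts each kernel $a_j(\overrightarrow{e(\vartheta)-x})/|e(\vartheta)-x|^{n+s_j}$ into $a_j^\star(\overrightarrow{e^\star(\vartheta)-x})/|e^\star(\vartheta)-x|^{2+s_j}$ exactly as in (\ref{Jtheta1theta2-Di3}), reducing (\ref{sigmagenerale-0}) to a planar integral equation. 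I would then exploit the identity
\[
\mathrm{p.v.}\int_{\mathbb R^2}\frac{a_1^\star(\overrightarrow{e^\star(\vartheta)-x})}{|e^\star(\vartheta)-x|^{2+s_1}}\bigl(\chi_{V^c}(x)-\chi_V(x)\bigr)\,dx=0,
\]
which holds because $\partial V$ is a line through $e^\star(\vartheta)$ and $a_1^\star$ is even by (\ref{AIPA2}). Splitting this vanishing principal value along $H$ and $H^c$ and using $J^\star_{\vartheta,\pi}=V^c\cap H$ and $J^\star_{0,\vartheta}=V\cap H$, the reduced planar version of (\ref{sigmagenerale-0}) becomes the regular integral equation
\[
\int_{V\cap H^c}\frac{a_1^\star(\overrightarrow{e^\star(\vartheta)-x})}{|e^\star(\vartheta)-x|^{2+s_1}}\,dx-\int_{V^c\cap H^c}\frac{a_1^\star(\overrightarrow{e^\star(\vartheta)-x})}{|e^\star(\vartheta)-x|^{2+s_1}}\,dx+\sigma\int_{H^c}\frac{a_2^\star(\overrightarrow{e^\star(\vartheta)-x})}{|e^\star(\vartheta)-x|^{2+s_1}}\,dx=0,
\]
in which every integrand is integrable since $e^\star(\vartheta)$ lies at distance $\sin\vartheta>0$ from $H^c$.

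Then I would compute each piece in polar coordinates $x=e^\star(\vartheta)+\rho(\cos\gamma,\sin\gamma)$ centered at $e^\star(\vartheta)$. The condition $x\in H^c$ translates to $\gamma\in(\pi,2\pi)$ with $\rho\ge\sin\vartheta/|\sin\gamma|$, while $V$ and $V^c$ correspond to $\gamma\in(\vartheta+\pi,2\pi)$ and $\gamma\in(\pi,\vartheta+\pi)$ respectively. Under (\ref{sjwicru348bt4v8b4576848poiuytrewqasdfghjkmnbvs}) all three radial integrands share the exponent $s_1$ and produce the common prefactor $|\sin\gamma|^{s_1}/(s_1\sin^{s_1}\vartheta)$; after the substitution $\gamma=\alpha+\pi$ and use of the $\pi$-periodicity recorded in (\ref{AIPA2}), the equation collapses to the one-dimensional identity displayed at the beginning. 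Since $\phi_1(\alpha)(\sin\alpha)^{s_1}>0$ on $(0,\pi)$, its primitive is strictly increasing from $0$ to $I_1$: this immediately yields the unconditional uniqueness claim, and the existence claim follows from the intermediate value theorem exactly under (\ref{0oujfn-29roh-32eirj-9034o5t-PK}).

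The step requiring the most care, and on which the whole argument pivots, is the hyperplane cancellation used to trade the singular integrals in (\ref{sigmagenerale-0}) for regular ones: it must be interpreted as the same principal value used in (\ref{D}) to define $\mathcal D_\vartheta$, and the evenness of $a_1$ recorded in (\ref{AIPA}) is precisely what allows a reflection through $\partial V$ to neutralize the anisotropic factor $a_1^\star$. Once this is in place, the remainder of the argument is a direct computation.
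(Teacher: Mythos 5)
Your proposal is correct and arrives at exactly the paper's reduced one-dimensional equation
$2\int_0^\vartheta\phi_1(\alpha)(\sin\alpha)^{s_1}\,d\alpha=I_1+\sigma I_2$, from which uniqueness
(strict positivity of $\phi_1(\vartheta)(\sin\vartheta)^{s_1}$) and existence
(intermediate value theorem under~\eqref{0oujfn-29roh-32eirj-9034o5t-PK}) both follow.
The route is essentially the paper's own, which reaches the same equation via
Lemmata~\ref{JSIMDESEM} and~\ref{JSIMDESEMa2}: dimensional reduction to two variables,
polar coordinates centered at~$e^\star(\vartheta)$, and cancellation by the evenness of~$a_1$.
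The only substantive difference is cosmetic: you package the cancellation as a single vanishing
principal value over~$\R^2$ and then trade the singular integrals on~$H$ for regular ones on~$H^c$,
whereas the paper cancels the near parts of~$J_{0,\vartheta}$ and~$J_{\vartheta,\pi}$ against each
other inside~$H$ and keeps the far region above~$\{x_n=2\sin\vartheta\}$. Both are the same
underlying substitution, and by the $\pi$-periodicity of~$\phi_1$ recorded in~\eqref{AIPA2} they
produce identical angular integrals.

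One imprecision to fix: you attribute the cancellation to ``a reflection through~$\partial V$.''
A reflection \emph{across} the line~$\partial V$ preserves each of~$V$ and~$V^c$ separately and
does not preserve the kernel unless~$a_1^\star$ enjoys extra symmetry, so it would not give your
displayed identity. What the identity actually uses — and what the paper's Lemma~\ref{JSIMDESEM}
uses with~$z=2e(\vartheta)-x$ — is the \emph{point} reflection through~$e^\star(\vartheta)$: this
sends~$\overrightarrow{e^\star(\vartheta)-x}$ to its negative (so the kernel is fixed by~\eqref{AIPA}),
preserves~$|e^\star(\vartheta)-x|$, and swaps~$V$ with~$V^c$ precisely because~$e^\star(\vartheta)\in\partial V$.
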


We stress once again that when~$a_1=a_2$ (and in particular for constant~$a_1=a_2$),
assumption~\eqref{0oujfn-29roh-32eirj-9034o5t-PK} reduces to the structural assumption~$|\sigma|<1$
that was taken in~\cite{MR3717439}.

Moreover, if~$K_1(\xi):=\frac{k_1}{|\xi|^{s_1}}$ and~$K_2(\xi):=\frac{k_2}{|\xi|^{s_2}}$ for some~$k_1$, $k_2>0$,
then assumption~\eqref{0oujfn-29roh-32eirj-9034o5t-PK} boils down to~$|\sigma|<\frac{k_1}{k_2}$, which is
precisely the condition for nontrivial minimizers obtained in Theorems~\ref{ESEMPIOPTO}
and~\ref{ESEMPIOPTO:CO}.

For these reasons, Theorem~\ref{CO:AN:CO} showcases the interesting fact that
the equation prescribing the contact angle in~\eqref{sigmagenerale-0}
admits one and only one solution precisely in the natural range of kernels given by~\eqref{sjwicru348bt4v8b4576848poiuytrewqasdfghjkmnbvs}
and~\eqref{0oujfn-29roh-32eirj-9034o5t-PK}.

Additionally, as we will point out in Remark~\ref{REM:barvartheta} at the end of Section~\ref{CO:AN:CO:S},
the uniqueness statement in Theorem~\ref{CO:AN:CO} heavily depends on the strict
positivity of the kernel and it fails for kernels that are merely nonnegative.

\subsection{Organization of the paper} 

The rest of the paper is organized as follows. In Section~\ref{SEC:CANCEL}
we provide the proof of the cancellation property stated in Proposition~\ref{cancellazione}.

In Section~\ref{SEC:YOUNG} we prove the nonlocal Young's law in Theorems \ref{thmYounglaw}
and~\ref{thmYounglawa1a2nonconst} and Proposition~\ref{chilosa22}, as well as Corollary~\ref{corollario1}.
Section~\ref{ELLANUEOD32} deals with the possible complete stickiness or detachment of the nonlocal
droplets and it presents the proofs of
Theorems~\ref{ESEMPIOPTO} and~\ref{ESEMPIOPTO:CO}.
Section~\ref{CO:AN:CO:S} is devoted to the
existence and uniqueness theory of the equation prescribing the contact angle and contains
the proof of Theorem~\ref{CO:AN:CO}.

Finally, 
the proof of Proposition~\ref{existenceofminimizer}
is contained in
Appendix~\ref{APPENDI-A}.

\section{The cancellation property in the anisotropic setting
and proof of Proposition \ref{cancellazione}}\label{SEC:CANCEL}

In this section we prove the desired cancellation property stated in Proposition~\ref{cancellazione}.
The argument relies on a delicate analysis of the geometric properties of the integrals involved
in the definition of the function in~\eqref{D}.

\begin{proof}[Proof of Proposition \ref{cancellazione}]
We focus on the proof of~\eqref{XRTS-1}, \eqref{XRTS-190}, \eqref{limit1} and~\eqref{limit2}:
once these statements are proved,
we can conclude that there exists an angle $\widehat{\vartheta}\in (0,2\pi)$ such that $\D(\widehat{\vartheta}) =0$, 
and this angle is unique since~$\mathcal{D}_\vartheta$
is strictly increasing,
thus establishing~\eqref{cancellation}. 

We start by proving \eqref{XRTS-1}. For this, we observe that
the definition in~\eqref{D} has to be interpreted in the principal-value sense, namely
\begin{equation}\label{limit}
\mathcal{D}_\vartheta(\bar{\vartheta})=\lim_{\rho\searrow 0}\left( \int_{J_{\vartheta, \vartheta+\bar{\vartheta}}\setminus B_\rho(e(\vartheta))}\frac{a_1(\overrightarrow{x-e(\vartheta)})}{|x-e(\vartheta)|^{n+s_1}}\,dx-\int_{J_{0, \vartheta}\setminus B_\rho(e(\vartheta))}\frac{a_1(\overrightarrow{x-e(\vartheta)})}{|x-e(\vartheta)|^{n+s_1}}\,dx\right).
\end{equation}
Hence, to establish~\eqref{XRTS-1}, we want to show that
the limit in~\eqref{limit} does exist and is finite. To this end, 
given~$\bar{\vartheta}\in(0,2\pi)$, we let~$\delta:=\min\{\sin\bar{\vartheta}, \sin\vartheta\}$
and we note that~$B_\delta(e(\vartheta))$ is contained in~$J_{0,\vartheta+\bar{\vartheta}}$.
Then, for every $\rho\in(0,\delta]$ we set 
\begin{equation*}\label{fepsilon}
f(\rho):= \int_{J_{\vartheta, \vartheta+\bar{\vartheta}}\setminus B_\rho(e(\vartheta))}\frac{a_1(\overrightarrow{x-e(\vartheta)})}{|x-e(\vartheta)|^{n+s_1}}\,dx-\int_{J_{0, \vartheta}\setminus B_\rho(e(\vartheta))}\frac{a_1(\overrightarrow{x-e(\vartheta)})}{|x-e(\vartheta)|^{n+s_1}}\,dx.
\end{equation*}
We also define~$A_{\delta,\rho}(e(\vartheta)):= B_\delta(e(\vartheta))\setminus B_\rho(e(\vartheta)) $,
see Figure~\ref{FIGURA3}.
By the change of variable~$x\mapsto 2e(\vartheta)-x$, we see that 
\begin{equation*}\label{intanello0}
\begin{split}
&\int_{J_{\vartheta, \vartheta+\bar{\vartheta}} \cap A_{\delta,\rho}(e(\vartheta))}\frac{a_1(\overrightarrow{x-e(\vartheta)})}{|x-e(\vartheta)|^{n+s_1}}\,dx-\int_{J_{0, \vartheta}\cap A_{\delta,\rho}(e(\vartheta))}\frac{a_1(\overrightarrow{x-e(\vartheta)})}{|x-e(\vartheta)|^{n+s_1}}\,dx\\
=&\int_{J_{0, \vartheta}\cap A_{\delta,\rho}(e(\vartheta))}\left[\frac{a_1(\overrightarrow{e(\vartheta)-x})}{|e(\vartheta)-x|^{n+s_1}}-\frac{a_1(\overrightarrow{x-e(\vartheta)})}{|x-e(\vartheta)|^{n+s_1}}\right]\,dx=0 ,
\end{split}
\end{equation*}
since $a_1$ is symmetric. {F}rom this, we deduce that for every $\rho\in(0,\delta]$
\begin{equation*}
f(\rho)-f(\delta)=\int_{J_{\vartheta, \vartheta+\bar{\vartheta}} \cap A_{\delta,\rho}(e(\vartheta))}\frac{a_1(\overrightarrow{x-e(\vartheta)})}{|x-e(\vartheta)|^{n+s_1}}\,dx-\int_{J_{0, \vartheta}\cap A_{\delta,\rho}(e(\vartheta))}\frac{a_1(\overrightarrow{x-e(\vartheta)})}{|x-e(\vartheta)|^{n+s_1}}\,dx=0. 
\end{equation*}

\begin{figure}[h]
\includegraphics[width=0.55\textwidth]{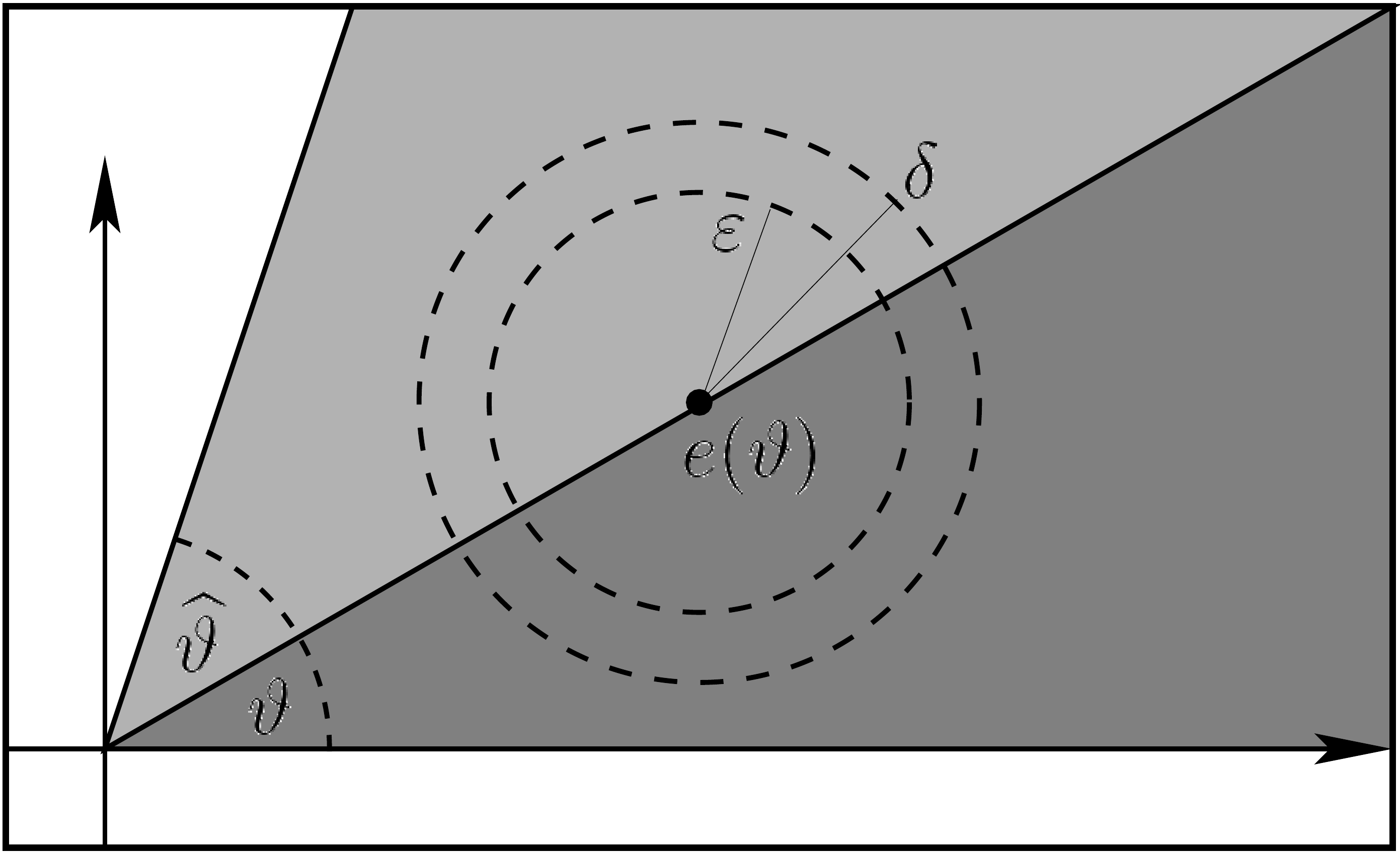}
\caption{The geometry involved in the proof of the existence and finiteness of the limit in~\eqref{limit}.}
        \label{FIGURA3}
\end{figure}

Hence we conclude that
\begin{equation}\label{limitenondipende}
\lim_ {\rho\searrow 0}f(\rho)=f(\delta),
\end{equation}
thus proving the existence and finiteness of the limit in~\eqref{limit}. 

This completes the proof of~\eqref{XRTS-1} and we now focus on the proof of~\eqref{XRTS-190}.

For this, we notice that, if~$\widetilde\vartheta$, $\bar\vartheta\in(0,2\pi)$
with~$\bar\vartheta\ge\widetilde\vartheta$,
\begin{eqnarray*}
\mathcal{D}_\vartheta(\bar{\vartheta})-\mathcal{D}_\vartheta(\widetilde{\vartheta})
&=&
\int_{J_{\vartheta, \vartheta+\bar{\vartheta}}}\frac{a_1(\overrightarrow{x-e(\vartheta)})}{|x-e(\vartheta)|^{n+s_1}}\,dx-
\int_{J_{\vartheta, \vartheta+\widetilde{\vartheta}}}\frac{a_1(\overrightarrow{x-e(\vartheta)})}{|x-e(\vartheta)|^{n+s_1}}\,dx
\\&=&\int_{J_{\vartheta+\widetilde{\vartheta}, \vartheta+\bar{\vartheta}}}\frac{a_1(\overrightarrow{x-e(\vartheta)})}{|x-e(\vartheta)|^{n+s_1}}\,dx
.\end{eqnarray*}
Since the denominator in the latter integral is bounded from below by a positive constant
(depending on~$\widetilde\vartheta$), the claim in~\eqref{XRTS-190} follows from the
Dominated Convergence Theorem.

We now deal with the proof of~\eqref{limit1} and~\eqref{limit2}. To this end, we first prove that
\begin{equation}
\label{AU:LIM}\begin{split}&
\lim_{\varepsilon\searrow0}\left(\int_{J_{\vartheta-\varepsilon, \vartheta}}\frac{a_1(\overrightarrow{x-e(\vartheta)})}{|x-e(\vartheta)|^{n+s_1}}\,dx-
\int_{J_{\vartheta, \vartheta+2\varepsilon}}\frac{a_1(\overrightarrow{x-e(\vartheta)})}{|x-e(\vartheta)|^{n+s_1}}\,dx\right)=-\infty\\
{\mbox{and }}\qquad&\lim_{\varepsilon\searrow0}\left(\int_{J_{\vartheta-2\varepsilon, \vartheta}}\frac{a_1(\overrightarrow{x-e(\vartheta)})}{|x-e(\vartheta)|^{n+s_1}}\,dx-
\int_{J_{\vartheta, \vartheta+\varepsilon}}\frac{a_1(\overrightarrow{x-e(\vartheta)})}{|x-e(\vartheta)|^{n+s_1}}\,dx\right)=+\infty.
\end{split}
\end{equation}

We focus on the proof of the first claim in~\eqref{AU:LIM} since a similar argument would take care of the second one.
For this, let~$\Xi$ be the first limit in~\eqref{AU:LIM} and~$
{\mathcal{R}}$ be the rotation by an angle~$\vartheta$
in the~$(x_1,x_n)$ plane that sends~$e(\vartheta)$ in~$e_1=(1,0,\dots,0)$. Let also~$a_{1,\vartheta}:=
a_1\circ {\mathcal{R}}$ and notice that~$a_{1,\vartheta}$ inherits the properties of~$a_1$, that is
$a_{1,\vartheta}$ is a continuous functions on~$\partial B_1$,
bounded from above and below by two positive constants and satisfying~$a_{1,\vartheta}(\omega)
=a_{1,\vartheta}(-\omega) $
for all~$\omega\in\partial B_1$.

With this notation, we have
\begin{equation} \label{KS-LIMSIEOS} \Xi=
\lim_{\varepsilon\searrow0}\left(\int_{J_{-\varepsilon, 0}}\frac{a_{1,\vartheta}(\overrightarrow{x-e_1})}{|x-e_1|^{n+s_1}}\,dx-
\int_{J_{0, 2\varepsilon}}\frac{a_{1,\vartheta}(\overrightarrow{x-e_1})}{|x-e_1|^{n+s_1}}\,dx\right).\end{equation}
We also remark that, in view of the boundedness of~$a_{1,\vartheta}$,
\begin{eqnarray*}
\int_{J_{-2\varepsilon, 2\varepsilon}\setminus B_2}\frac{a_{1,\vartheta}(\overrightarrow{x-e_1})}{|x-e_1|^{n+s_1}}\,dx
\le\int_{\R^n\setminus B_1}\frac{C}{|y|^{n+s_1}}\,dy\le C,
\end{eqnarray*}
for a suitable constant~$C\ge1$ possibly varying from step to step.

Combining this information with~\eqref{KS-LIMSIEOS} we find that
\begin{equation} \label{KS-LIMSIEOS-2} \Xi\le
\lim_{\varepsilon\searrow0}\left(\int_{J_{-\varepsilon, 0}\cap B_2}\frac{a_{1,\vartheta}(\overrightarrow{x-e_1})}{|x-e_1|^{n+s_1}}\,dx-
\int_{J_{0, 2\varepsilon}\cap B_2}\frac{a_{1,\vartheta}(\overrightarrow{x-e_1})}{|x-e_1|^{n+s_1}}\,dx+C\right).\end{equation}
Now we claim that, if~$\varepsilon$ is sufficiently small,
\begin{equation} \label{KS-LIMSIEOS-3} 
B_{\varepsilon/10}\left( e_1+\frac{3\varepsilon}{2}e_n\right)\subseteq
J_{\varepsilon, 2\varepsilon}\cap B_2.
\end{equation}
To check this, let~$y\in B_{\varepsilon/10}\left( e_1+\frac{3\varepsilon}{2}e_n\right)$. Then,
$$ \frac{\varepsilon^2}{100}\ge |y_1-1|^2+\left|y_n-\frac{3\varepsilon}{2}\right|^2$$
and accordingly~$y_1\in\left[1-\frac\varepsilon{10},1+\frac\varepsilon{10}\right]$
and~$y_n\in\left[\frac{7\varepsilon}{5},\frac{8\varepsilon}{5}\right]$.
As a consequence, if~$\varepsilon$ is conveniently small,
$$ \frac{y_n}{y_1}\in
\left[\frac{\frac{7\varepsilon}{5}}{1+\frac\varepsilon{10}},\frac{\frac{8\varepsilon}{5}}{1-\frac\varepsilon{10}}\right]\subseteq
\left[\frac{6\varepsilon}{5},\frac{9\varepsilon}{5}\right]\subseteq[\tan\varepsilon,\tan(2\varepsilon)],
$$
which, recalling~\eqref{Jtheta1theta2},
establishes~\eqref{KS-LIMSIEOS-3}.

Using~\eqref{KS-LIMSIEOS-3} and the assumption that~$a_{1,\vartheta}$ is bounded from below
away from zero, we obtain that
\begin{eqnarray*}
\int_{J_{\varepsilon, 2\varepsilon}\cap B_2}\frac{a_{1,\vartheta}(\overrightarrow{x-e_1})}{|x-e_1|^{n+s_1}}\,dx\ge
\frac{1}{C}
\int_{B_{\varepsilon/10}\left( e_1+\frac{3\varepsilon}{2}e_n\right)}\frac{dx}{|x-e_1|^{n+s_1}}\ge\frac{1}{C\varepsilon^{s_1}}.
\end{eqnarray*}
This and~\eqref{KS-LIMSIEOS-2} entail that
\begin{equation}\label{LMMDNSMANDNDIKJE} \Xi\le
\lim_{\varepsilon\searrow0}\left(\int_{J_{-\varepsilon, 0}\cap B_2}\frac{a_{1,\vartheta}(\overrightarrow{x-e_1})}{|x-e_1|^{n+s_1}}\,dx-
\int_{J_{0, \varepsilon}\cap B_2}\frac{a_{1,\vartheta}(\overrightarrow{x-e_1})}{|x-e_1|^{n+s_1}}\,dx-\frac{1}{C\varepsilon^{s_1}}+C\right).\end{equation}
Now we observe that
$$ J_{-\varepsilon,\varepsilon}=
\big\{x\in \mathbb{R}^n\,:\,|x_n|<\tan\varepsilon \,x_1\big\}$$
and we define
\begin{eqnarray*}&&
J^\sharp_\varepsilon:=2e_1-J_{-\varepsilon,\varepsilon},\\&&
R_{\varepsilon}:=J_{-\varepsilon,\varepsilon}\cap J^\sharp_\varepsilon,\\&&
J_\varepsilon^\star:=J_{0,\varepsilon}\setminus R_{\varepsilon}\\
{\mbox{and }}&& {\mathcal{B}}_{\varepsilon}:=
\left\{x\in J_\varepsilon^\star\,:\,x_n>\frac{x_1-1}{|\log\varepsilon|}\right\},
\end{eqnarray*}
see Figure~\ref{FIGURA00300}.

\begin{figure}[h]
\includegraphics[width=0.65\textwidth]{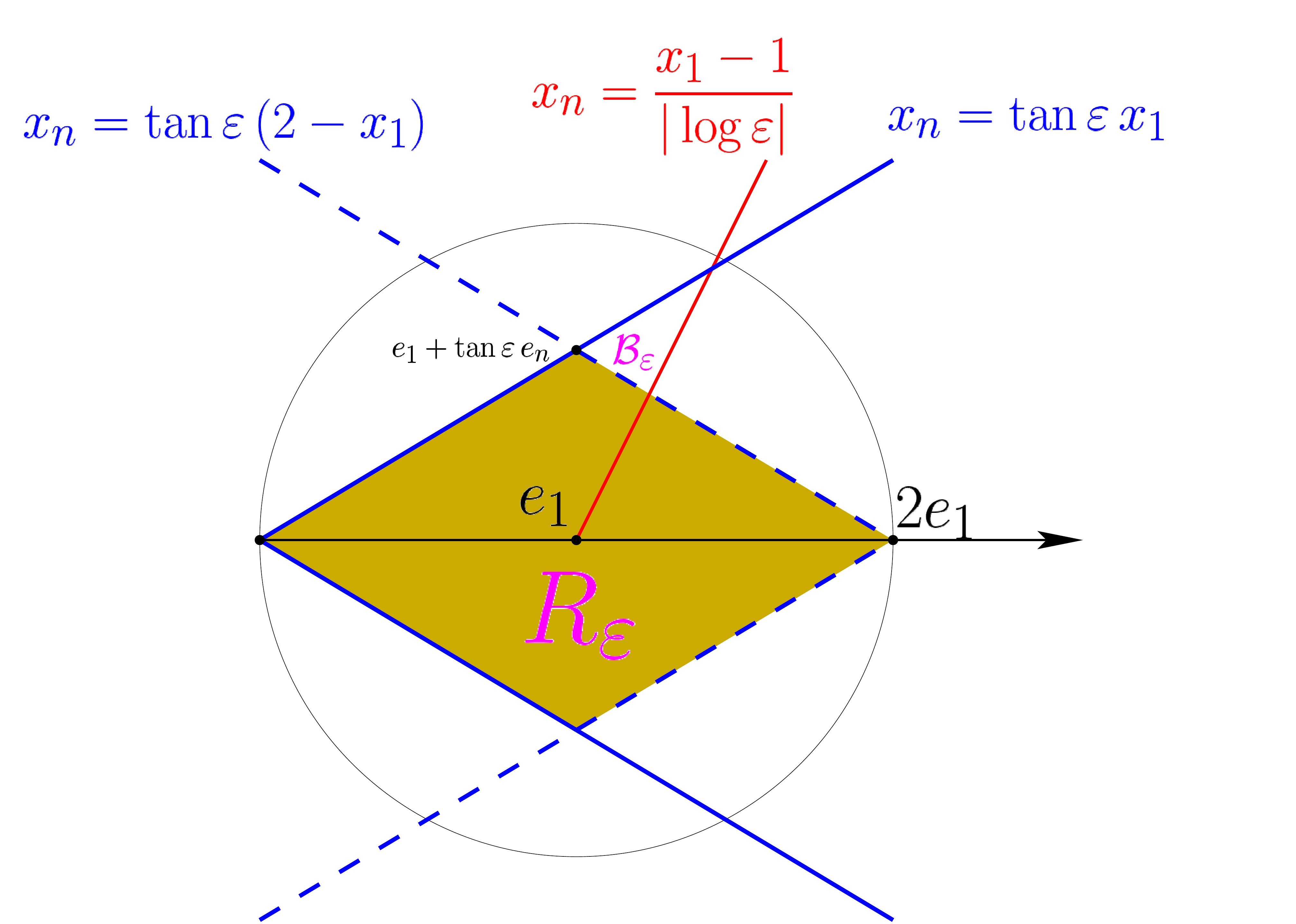}
\caption{The set decomposition involved in the proof of~\eqref{AU:LIM}.}
        \label{FIGURA00300}
\end{figure}

The intuition behind this set decomposition
is that, on the one hand, the set~$R_{\varepsilon}$
accounts for the cancellations due to the symmetry of~$a_{1,\vartheta}$
(corresponding to the reflection through~$e_1$, namely~$x\mapsto2e_1-x$);
on the other hand, the remaining integral contributions
in~$J_\varepsilon^\star$ cancel exactly when~$a_{1,\vartheta}$ is constant,
thanks to the reflection through the horizontal hyperplane~$x\mapsto(x',-x_n)$,
but they may provide additional terms when~$a_{1,\vartheta}$ is not constant.
To overcome this difficulty, our idea is to exploit the continuity
of~$a_{1,\vartheta}$ together with the reflection through the horizontal hyperplane
in order to ``approximately cancel'' as many contributions as possible.

This idea by itself however does not exhaust the complexity of the problem,
because two adjacent points can end up being projected far away from each other
on the sphere (for instance, if a point is close to~$e_1+\tan\varepsilon\,e_n$
and the other to~$e_1-\tan\varepsilon\,e_n$). To overcome this additional
complication, we exploit the set~${\mathcal{B}}_{\varepsilon}$: roughly
speaking, points outside~${\mathcal{B}}_{\varepsilon}$ remain sufficiently
close after they get projected
on the sphere (and here we can take advantage of the continuity of~$a_{1,\vartheta}$),
while the points in~${\mathcal{B}}_{\varepsilon}$
provide an additional, but small, correction, in view of
the location of~${\mathcal{B}}_{\varepsilon}$ and of its measure.

The details of the quantitative computation needed to
implement this combination of ideas go as follows.

We stress that 
\begin{equation}\label{SU78SL}
{\mbox{if~$x$ belongs to~$R_{\varepsilon}$, then so does~$2e_1-x$.}}\end{equation}
Indeed, if~$x\in R_{\varepsilon}$ then~$x\in J_{-\varepsilon,\varepsilon}$
and~$x\in 2e_1-J_{-\varepsilon,\varepsilon}$, and consequently~$2e_1-x\in 2e_1-J_{-\varepsilon,\varepsilon}$
and~$2e_1-x\in J_{-\varepsilon,\varepsilon}$, which gives~\eqref{SU78SL}.

Also, we see that~$J_{-\varepsilon, 0}\cap R_{\varepsilon}=R_{\varepsilon}\cap\{x_n<0\}$
and~$J_{0,\varepsilon}\cap R_{\varepsilon}=R_{\varepsilon}\cap\{x_n>0\}$.
Thus, using~\eqref{SU78SL}, the change of variable~$x\mapsto 2e_1-x$ and the symmetry of~$a_{1,\vartheta}$, taking into account that under this transformation some vectors end up outside the ball $B_2$,
\begin{equation*}
\begin{split}
\int_{J_{-\varepsilon, 0}\cap B_2\cap R_{\varepsilon}}\frac{
a_{1,\vartheta}(\overrightarrow{x-e_1})}{|x-e_1|^{n+s_1}}\,dx&=
\int_{R_{\varepsilon}\cap\{x_n<0\}\cap B_2}\frac{a_{1,\vartheta}(\overrightarrow{x-e_1})}{|x-e_1|^{n+s_1}}\,dx
\\
&\le\int_{R_{\varepsilon}\cap\{x_n>0\}\cap B_2}\frac{a_{1,\vartheta}(\overrightarrow{e_1-x})}{
|e_1-x|^{n+s_1}}\,dx+C\int_{\R^n\setminus B_2}\frac{dx}{|x-e_1|^{n+s_1}}\\
&\le\int_{J_{0,\varepsilon}\cap B_2\cap R_{\varepsilon}}\frac{a_{1,\vartheta}(\overrightarrow{x-e_1})}{|x-e_1|^{n+s_1}}\,dx+C.
\end{split}
\end{equation*}
Plugging this cancellation into~\eqref{LMMDNSMANDNDIKJE}, we conclude that
\begin{equation*} \Xi\le
\lim_{\varepsilon\searrow0}\left(\int_{(J_{-\varepsilon, 0}\cap B_2)\setminus R_\varepsilon}\frac{a_{1,\vartheta}(\overrightarrow{x-e_1})}{|x-e_1|^{n+s_1}}\,dx-
\int_{(J_{0, \varepsilon}\cap B_2)\setminus R_\varepsilon}\frac{a_{1,\vartheta}(\overrightarrow{x-e_1})}{|x-e_1|^{n+s_1}}\,dx-\frac{1}{C\varepsilon^{s_1}}+C\right).\end{equation*}
Using the change of variable~$x\mapsto(x',-x_n)$ and noticing that~$|(x',-x_n)-e_1|=|x-e_1|$, we thus find that
\begin{equation} \label{KS-LIMSIEOS-4} \begin{split}\Xi\,&\le\,
\lim_{\varepsilon\searrow0}\left(
\int_{(J_{0, \varepsilon}\cap B_2)\setminus R_\varepsilon}\frac{
a_{1,\vartheta}(\overrightarrow{(x',-x_n)-e_1})-
a_{1,\vartheta}(\overrightarrow{x-e_1})}{|x-e_1|^{n+s_1}}\,dx-\frac{1}{C\varepsilon^{s_1}}+C\right)\\
&=\,
\lim_{\varepsilon\searrow0}\left(
\int_{J^\star_{\varepsilon}\cap B_2}\frac{
a_{1,\vartheta}(\overrightarrow{(x',-x_n)-e_1})-
a_{1,\vartheta}(\overrightarrow{x-e_1}) }{|x-e_1|^{n+s_1}}\,dx-\frac{1}{C\varepsilon^{s_1}}+C\right).
\end{split}\end{equation}
We point out that
\begin{equation}\label{DENT5-009}
J_\varepsilon^\star\subseteq\{ x_1\ge1\}.
\end{equation}
Indeed, if~$x\in J_\varepsilon^\star$, then~$x\in
J_{0,\varepsilon}$, whence
\begin{equation}\label{DENT5-009-1}
x_n\in\left(0,\tan\varepsilon x_1\right).\end{equation}
Also, we have that~$x\not\in R_\varepsilon$ and therefore
either~$x\not\in J_{-\varepsilon,\varepsilon}$ or~$x\not\in J^\sharp_\varepsilon$.
In fact, since~$J_{0,\varepsilon}\subseteq J_{-\varepsilon,\varepsilon}$,
we have that necessarily~$x\not\in J^\sharp_\varepsilon$,
and, as a result, $2e_1-x\not\in J_{-\varepsilon,\varepsilon}$.
This gives that~$|x_n|\ge\tan\varepsilon\,(2-x_1)$. Therefore, by~\eqref{DENT5-009-1},
\begin{equation}\label{211BIS}
2-x_1\le\frac{|x_n|}{\tan\varepsilon}=\frac{ x_n }{\tan\varepsilon}\le x_1,
\end{equation}
and this entails~\eqref{DENT5-009}.

Now we claim that
\begin{equation}\label{k-s-PKSMD-0}
{\mathcal{B}}_{\varepsilon}\subseteq
\left\{x\in \R^n\,:\, |x_1-1|\le 2\varepsilon\,|\log\varepsilon|,\,
|x_n-\tan\varepsilon|\le2\varepsilon^2\,|\log\varepsilon|
\right\}.
\end{equation}
To check this, let~$x\in{\mathcal{B}}_{\varepsilon}$. Then,
\begin{equation}\label{k-s-PKSMD-1} \frac{x_1-1}{|\log\varepsilon|}\le x_n\le\tan\varepsilon\,x_1=
\tan\varepsilon\,(x_1-1)+\tan\varepsilon.
\end{equation}
Recalling~\eqref{DENT5-009}, we thus find that
$$
\left( \frac{1}{|\log\varepsilon|}-\tan\varepsilon\right)|
x_1-1|=
\left( \frac{1}{|\log\varepsilon|}-\tan\varepsilon\right)(
x_1-1)\le\tan\varepsilon.$$
Consequently, if~$\varepsilon$ is conveniently small,
\begin{equation} \label{k-s-PKSMD}\frac{9}{10}\,|x_1-1|\le
\left( 1-\tan\varepsilon\,|\log\varepsilon|\right)|
x_1-1|\le\tan\varepsilon\,|\log\varepsilon|\le
\frac{11}{10}\,\varepsilon\,|\log\varepsilon|.\end{equation}
Furthermore, by the second inequality in~\eqref{k-s-PKSMD-1}
and~\eqref{k-s-PKSMD},
\begin{equation} \label{k-s-PKSMD-8} x_n-\tan\varepsilon\le\tan\varepsilon\,( x_1-1) \le
\tan\varepsilon\,|x_1-1|\le\frac{11}9\,
\varepsilon\,\tan\varepsilon\,|\log\varepsilon|\le2\varepsilon^2\,|\log\varepsilon|.
\end{equation}
Moreover, from~\eqref{211BIS},
$$ x_n\ge\tan\varepsilon\,(2-x_1),$$
whence, utilizing again~\eqref{k-s-PKSMD},
$$ \tan\varepsilon-x_n\le\tan\varepsilon+
\tan\varepsilon\,(x_1-2)\le \tan\varepsilon\,|x_1-1|\le
\frac{11}9\,
\varepsilon\,\tan\varepsilon\,|\log\varepsilon|\le2\varepsilon^2\,|\log\varepsilon|.
$$
{F}rom this, \eqref{k-s-PKSMD} and~\eqref{k-s-PKSMD-8}
we obtain~\eqref{k-s-PKSMD-0}, as desired.

Now, using~\eqref{k-s-PKSMD-0} and the changes of variable~$y:=\frac{x-e_1}{\tan\varepsilon}$
and~$z:=\left(\frac{y'}{|y_n|},y_n\right)$,
we see that
\begin{equation}\label{CABMSFIDMNLAMDXVA}
\begin{split}
\int_{{\mathcal{B}}_{\varepsilon}}\frac{dx}{|x-e_1|^{n+s_1}}\,&\leq\,
\int_{ {\{|x_1-1|\le 2\varepsilon\,|\log\varepsilon|\}}\atop{\{
|x_n-\tan\varepsilon|\le2\varepsilon^2\,|\log\varepsilon|\}}}\frac{dx}{|x-e_1|^{n+s_1}}\\&
=\,\frac{1}{(\tan\varepsilon)^{s_1}}
\int_{ {\{|y_1|\le 2\varepsilon\,|\log\varepsilon|/\tan\varepsilon\}}\atop{\{
|y_n-1|\le2\varepsilon^2\,|\log\varepsilon|/\tan\varepsilon\}}}\frac{dy}{|y|^{n+s_1}}
\\&
\le\,\frac{2}{ \varepsilon^{s_1}}
\int_{ \{
|y_n-1|\le4\varepsilon\,|\log\varepsilon|\}}\frac{dy}{|y|^{n+s_1}}\\&
=\,\frac{2}{ \varepsilon^{s_1}}
\int_{ \{
|z_n-1|\le4\varepsilon\,|\log\varepsilon|\}}\frac{dz}{|z_n|^{1+s_1}\big(|z'|^2+1\big)^{\frac{n+s_1}2}}\\
&\le\,\frac{C}{ \varepsilon^{s_1}}
\int_{ 1-4\varepsilon\,|\log\varepsilon|}^{ 1+4\varepsilon\,|\log\varepsilon|}\frac{dz_n}{z_n^{1+s_1}}\\&\le\,
C\varepsilon^{1-s_1}|\log\varepsilon|,
\end{split}\end{equation}
up to renaming the positive constant~$C$
line after line.

We also recall that~$|(x',-x_n)-e_1|=|x-e_1|$
and accordingly
\begin{equation}\label{petehitgherutu7y672223}
\left|\overrightarrow{(x',-x_n)-e_1}- \overrightarrow{x-e_1}\right|=
\frac{\left| ((x',-x_n)-e_1)-(x-e_1)\right|}{|x-e_1|}=\frac{2|x_n|}{|x-e_1|}.
\end{equation}
As a result, recalling~\eqref{DENT5-009} and~\eqref{DENT5-009-1},
if~$x\in J^\star_{\varepsilon}\setminus {\mathcal{B}}_{\varepsilon}$ then
$$|x_n|=x_n\le\frac{x_1-1}{|\log\varepsilon|}=\frac{|x_1-1|}{|\log\varepsilon|}.$$
This and~\eqref{petehitgherutu7y672223} give that
\begin{equation*}
\left|\overrightarrow{(x',-x_n)-e_1}- \overrightarrow{x-e_1}\right|\le
\frac{2}{|\log\varepsilon|}.
\end{equation*}
Consequently, if we consider the modulus of continuity of~$a_{1,\vartheta}$, namely
$$ \sigma(t):=\sup_{{v,w\in\partial B_1}\atop{|v-w|\le t}}|a_{1,\vartheta}(v)-a_{1,\vartheta}(w)|,$$
we deduce that
if~$x\in J^\star_{\varepsilon}\setminus {\mathcal{B}}_{\varepsilon}$ then
$$ \big|a_{1,\vartheta}(\overrightarrow{(x',-x_n)-e_1})-
a_{1,\vartheta}(\overrightarrow{x-e_1})\big|\le\sigma\left(\frac{2}{|\log\varepsilon|}\right)$$
and thus
\begin{equation}\label{KMD:9876tg:SODK:OCKMSax4XZXX}
\int_{J^\star_{\varepsilon}\setminus {\mathcal{B}}_{\varepsilon}}\frac{
a_{1,\vartheta}(\overrightarrow{(x',-x_n)-e_1})-
a_{1,\vartheta}(\overrightarrow{x-e_1}) }{|x-e_1|^{n+s_1}}\,dx\le
\sigma\left(\frac{2}{|\log\varepsilon|}\right)
\int_{J^\star_{\varepsilon}\setminus {\mathcal{B}}_{\varepsilon}}\frac{dx }{|x-e_1|^{n+s_1}}.
\end{equation}
Notice also that
\begin{equation}\label{KMD:9876tg:SODK:OCKMSax4XZ}
J^\star_{\varepsilon}\subseteq\R^n\setminus B_{\varepsilon/100}(e_1).
\end{equation}
Indeed, if~$x\in J^\star_{\varepsilon}$, from~\eqref{211BIS} we have that
\begin{equation}\label{KMD:9876tg:SODK:OCKMSax4XZ2}
|x_n|=x_n\ge\tan\varepsilon(2-x_1).\end{equation}
Now, if~$x_1\ge \frac{19}{10}$, then~$|x-e_1|\ge|x_1-1|\ge\frac{9}{10}$
and~\eqref{KMD:9876tg:SODK:OCKMSax4XZ} plainly follows; if instead~$
x_1< \frac{19}{10}$ we deduce from~\eqref{KMD:9876tg:SODK:OCKMSax4XZ2} that
$$ |x-e_1|\ge|x_n|\ge\frac{\tan\varepsilon}{10},$$
from which~\eqref{KMD:9876tg:SODK:OCKMSax4XZ} follows in this case too.

By combining~\eqref{KMD:9876tg:SODK:OCKMSax4XZXX}
and~\eqref{KMD:9876tg:SODK:OCKMSax4XZ} we deduce that
\begin{eqnarray*} \int_{J^\star_{\varepsilon}\setminus {\mathcal{B}}_{\varepsilon}}\frac{
a_{1,\vartheta}(\overrightarrow{(x',-x_n)-e_1})-
a_{1,\vartheta}(\overrightarrow{x-e_1}) }{|x-e_1|^{n+s_1}}\,dx&\le&
\sigma\left(\frac{2}{|\log\varepsilon|}\right)
\int_{\R^n\setminus B_{\varepsilon/100}}\frac{dy }{|y|^{n+s_1}}\\&\le&
\frac{C}{\varepsilon^{s_1}}\,
\sigma\left(\frac{2}{|\log\varepsilon|}\right),
\end{eqnarray*}
which together with~\eqref{CABMSFIDMNLAMDXVA} leads to
$$ \int_{J^\star_{\varepsilon}}\frac{
a_{1,\vartheta}(\overrightarrow{(x',-x_n)-e_1})-
a_{1,\vartheta}(\overrightarrow{x-e_1}) }{|x-e_1|^{n+s_1}}\,dx
\le \frac{C}{\varepsilon^{s_1}}\,
\sigma\left(\frac{2}{|\log\varepsilon|}\right)
+C\varepsilon^{1-s_1}|\log\varepsilon|.$$
Joining this information with~\eqref{KS-LIMSIEOS-4} we find that
\begin{eqnarray*}
\Xi&\le&
\lim_{\varepsilon\searrow0}\left[
\frac{C}{\varepsilon^{s_1}}\,
\sigma\left(\frac{2}{|\log\varepsilon|}\right)
+C\varepsilon^{1-s_1}|\log\varepsilon|
-\frac{1}{C\varepsilon^{s_1}}+C\right]
\\&=&
\lim_{\varepsilon\searrow0}
\frac{1}{\varepsilon^{s_1}}\,\left[
C\sigma\left(\frac{2}{|\log\varepsilon|}\right)
+C\varepsilon\,|\log\varepsilon|
-\frac{1}{C}+C\varepsilon^{s_1}\right]
\\&\le&
\lim_{\varepsilon\searrow0}\left(
-\frac{1}{2C\varepsilon^{s_1}}\right)
\\&=&-\infty
\end{eqnarray*}
This completes the proof of~\eqref{AU:LIM}.

Now, using~\eqref{AU:LIM},
\begin{eqnarray*}&&
\lim_{\bar{\vartheta}\searrow 0}\mathcal{D}_\vartheta(\bar{\vartheta})\\&=&\lim_{\bar{\vartheta}\searrow 0}
\left(\int_{J_{\vartheta,\vartheta+\bar{\vartheta}}}\frac{
a_1(\overrightarrow{x-e(\vartheta)})}{|x-e(\vartheta)|^{n+s_1}}\, dx
-\int_{J_{\vartheta-2\bar{\vartheta},\vartheta}}\frac{a_1(\overrightarrow{x-e(\vartheta)})}{|x-e(\vartheta)|^{n+s_1}}\, dx-
\int_{J_{0,\vartheta-2\bar{\vartheta}} }\frac{a_1(\overrightarrow{x-e(\vartheta)})}{|x-e(\vartheta)|^{n+s_1}}\, dx\right)
\\&
\le&\lim_{\bar{\vartheta}\searrow 0}
\left(\int_{J_{\vartheta,\vartheta+\bar{\vartheta}}}\frac{
a_1(\overrightarrow{x-e(\vartheta)})}{|x-e(\vartheta)|^{n+s_1}}\, dx
-\int_{J_{\vartheta-2\bar{\vartheta},\vartheta}}\frac{a_1(\overrightarrow{x-e(\vartheta)})}{|x-e(\vartheta)|^{n+s_1}}\, dx\right)
=-\infty,
\end{eqnarray*}
which proves~\eqref{limit1}, and
\begin{eqnarray*}&&
\lim_{\bar{\vartheta}\nearrow 2\pi}
\mathcal{D}_\vartheta(\bar{\vartheta})
=\lim_{\alpha\searrow 0}
\mathcal{D}_\vartheta(2\pi-\alpha)
=\lim_{\alpha\searrow 0}\left(
\int_{J_{\vartheta,\vartheta+2\pi-\alpha}}
\frac{a_1(\overrightarrow{x-e(\vartheta)})}{|x-e(\vartheta)|^{n+s_1}}\, dx
-\int_{J_{0,\vartheta}}\frac{a_1(\overrightarrow{x-e(\vartheta)})}{
|x-e(\vartheta)|^{n+s_1}}\, dx\right)\\&&\qquad =\lim_{\alpha\searrow 0}\left(
\int_{J_{\vartheta, 2\pi}}
\frac{a_1(\overrightarrow{x-e(\vartheta)})}{|x-e(\vartheta)|^{n+s_1}}\, dx
-\int_{J_{\vartheta-\alpha,\vartheta}}\frac{a_1(\overrightarrow{x-e(\vartheta)})}{
|x-e(\vartheta)|^{n+s_1}}\, dx\right)
\\&&\qquad \geq
\lim_{\alpha\searrow 0}\left(
\int_{J_{\vartheta, \vartheta+2\alpha}}
\frac{a_1(\overrightarrow{x-e(\vartheta)})}{|x-e(\vartheta)|^{n+s_1}}\, dx
-\int_{J_{\vartheta-\alpha,\vartheta}}\frac{a_1(\overrightarrow{x-e(\vartheta)})}{
|x-e(\vartheta)|^{n+s_1}}\, dx\right)=+\infty,
\end{eqnarray*}
which proves~\eqref{limit2}.
\end{proof}

\section{Nonlocal Young's law and proofs of Theorems \ref{thmYounglaw} and~\ref{thmYounglawa1a2nonconst}, of Corollary~\ref{corollario1} and of Proposition~\ref{chilosa22},
}\label{SEC:YOUNG}

In order to prove Theorems \ref{thmYounglaw}
and~\ref{thmYounglawa1a2nonconst},
Corollary~\ref{corollario1} and Proposition~\ref{chilosa22}, we first
recall an ancillary result on the continuity of the nonlocal $K$-mean curvature defined
in~\eqref{meancurvature}
(for the usual fractional mean curvature, that is when the kernel~$K$ is as in~\eqref{fractkernel},
similar continuity results were presented in~\cite{MR3322379, MR3393254}).

{F}rom now on,
we denote points~$x\in\R^n$ as~$x=(x',x_n)\in\R^{n-1}\times\R$ and we set
\begin{equation*}
\begin{split}
\mathbf{C}&:=\{x=(x',x_n)\in\R^n:|x'|<1, |x_n|<1\}\\
{\mbox{and }}\quad \mathbf{D}&:=\{z\in\R^{n-1}:|z|<1\}.
\end{split}
\end{equation*}

\begin{lem}\label{lemmadiconv}
Let $\lambda\geq 1$, $s\in (0,1)$ and~$\alpha\in (s,1)$.
Let $\{F_k\}_{k\in\mathbb{N}}$ be a sequence of Borel sets in $\R^n$ such that $0\in \partial F_k$ and
\begin{equation*}
\text{$F_k\rightarrow F$ in $L^1_{\mathrm{loc}}(\R^n)$ for some $F\subseteq \R^n$}.
\end{equation*}
and $u_k$, $u\in C^{1,\alpha}(\R^{n-1})$ be such that
\begin{equation*}
\mathbf{C}\cap F_k=\{x\in \mathbf{C}:x_n\leq u_k(x')\}
\end{equation*}
and
\begin{equation*}
\lim_{k\rightarrow +\infty}\Vert u_k-u\Vert_{C^{1,\alpha}(\mathbf{D})}=0.
\end{equation*}
Let $K_k$, $K\in \mathbf{K}(n,s,\lambda,0)$ be
such that~$K_k\rightarrow K$ pointwise in $\R^n\setminus\{0\}$ as $k\rightarrow +\infty$.

Then 
\begin{equation*}
\lim_{k\rightarrow +\infty}\mathbf{H}^{K_k}_{\partial F_k}(0)=\mathbf{H}^{K}_{\partial F}(0).
\end{equation*}
\end{lem}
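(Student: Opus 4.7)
The plan is to decompose the principal-value integral defining $\mathbf{H}^{K_k}_{\partial F_k}(0)$ into a near part on a small ball $B_r$ and a far part on $\R^n \setminus B_r$, then show that the near part is uniformly $o(1)$ in $k$ as $r \to 0^+$, while for each fixed $r$ the far part converges to the analogous quantity for $(F,K)$ as $k \to +\infty$. Combining these two facts via the triangle inequality will conclude the argument. Throughout, we use evenness of $K_k$ and $K$ to rewrite the value at $0$ as $\int K_k(y)(\chi_{F_k^c}(y)-\chi_{F_k}(y))\,dy$ in the principal-value sense.

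For the near part, the key idea is to subtract off the contribution of the tangent half-space. Setting $H_k := \{y \in \R^n : y_n \leq \nabla u_k(0) \cdot y'\}$, which passes through $0$ since $u_k(0)=0$, the evenness of $K_k$ together with the $y \mapsto -y$ symmetry of $B_r \setminus B_\varepsilon$ yields
\[
\int_{B_r \setminus B_\varepsilon} K_k(y)\bigl(\chi_{H_k^c}(y) - \chi_{H_k}(y)\bigr)\,dy = 0,
\]
so the relevant near quantity reduces to $2\int_{B_r \setminus B_\varepsilon} K_k(y)(\chi_{H_k}(y) - \chi_{F_k}(y))\,dy$. The uniform $C^{1,\alpha}$ convergence $u_k \to u$ and the identity $u_k(0)=0$ ensure that, for all $\rho \in (0,1)$,
\[
\bigl|(F_k \triangle H_k) \cap B_\rho\bigr| \leq C \rho^{n+\alpha},
\]
with $C$ independent of $k$. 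A dyadic decomposition of $B_r$ into annuli $A_j := B_{2^{-j}r} \setminus B_{2^{-j-1}r}$, on which $K_k \leq C\, 2^{j(n+s)} r^{-(n+s)}$, then gives
\[
\Big| \int_{B_r \setminus B_\varepsilon} K_k\,(\chi_{H_k}-\chi_{F_k})\,dy\Big|
\leq C \sum_{j \geq 0} 2^{j(n+s)} r^{-(n+s)} \cdot (2^{-j} r)^{n+\alpha}
= C\, r^{\alpha - s}\sum_{j \geq 0} 2^{-j(\alpha - s)},
\]
a convergent series precisely because $\alpha > s$. This bound is uniform in $k$ and in $\varepsilon$, and simultaneously yields the existence of the principal value, together with $|I_k^{\mathrm{in}}(r)| \leq C\, r^{\alpha - s}$; the very same estimate applies to the limit $|I^{\mathrm{in}}(r)|$.

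For the far part, we rearrange
\[
I_k^{\mathrm{out}}(r) - I^{\mathrm{out}}(r) = \int_{\R^n \setminus B_r}(K_k - K)(\chi_{F_k^c}-\chi_{F_k})\,dy + 2\int_{\R^n \setminus B_r} K\,(\chi_F - \chi_{F_k})\,dy.
\]
The first integral tends to zero by dominated convergence, since its integrand is dominated by $2\lambda/|y|^{n+s}$ on $\R^n\setminus B_r$ and $K_k \to K$ pointwise. For the second, we split at an auxiliary radius $R>r$: on $B_R \setminus B_r$ the kernel $K$ is bounded by $\lambda/r^{n+s}$ and $\chi_{F_k}\to\chi_F$ in $L^1(B_R)$, while on $\R^n \setminus B_R$ the contribution is at most $C R^{-s}$ uniformly. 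Choosing first $R$ large and then $k$ large makes it arbitrarily small. Given $\delta>0$, picking $r$ small enough that $|I_k^{\mathrm{in}}(r)|+|I^{\mathrm{in}}(r)|\leq \delta/2$ for all $k$, and then $k$ large to bring the far difference below $\delta/2$, proves the claim. The main technical point, and the step that uses the hypothesis $\alpha>s$ in an essential way, is the half-space cancellation combined with the dyadic estimate that tames the singular near integral uniformly in $k$; the far-field part is then a soft dominated-convergence argument.
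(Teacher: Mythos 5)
Your proof is correct and follows the standard line of argument: split the principal-value integral into a near part on $B_r$ and a far part on $\R^n\setminus B_r$; cancel the near singularity against the tangent half-space $H_k$ using the evenness of $K_k$; control the remainder via the uniform bound $|(F_k\triangle H_k)\cap B_\rho|\le C\rho^{n+\alpha}$ (which requires the uniform $C^{1,\alpha}$ bound on $u_k$ and $\alpha>s$), made quantitative through a dyadic decomposition; and pass to the limit in the far part by dominated convergence combined with the $L^1_{\rm loc}$ convergence of $\chi_{F_k}$. The paper does not give its own proof but refers to Lemma~4.1 of~\cite{MR3717439}, and your argument is essentially that proof adapted to the present setting of general kernels in $\mathbf{K}(n,s,\lambda,0)$ converging pointwise, so the route is the same.
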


For the proof of Lemma~\ref{lemmadiconv} here, see Lemma~4.1 in~\cite{MR3717439}.
\medskip

We will also need a technical lemma to distinguish between the nondegenerate case~$\vartheta\in(0,\pi)$
and the
particular cases in which~$\vartheta\in\{0,\pi\}$.

\begin{lem}\label{angolettilemma}
Let~$K_1\in\mathbf{K}^2(n,s_1,\lambda,\varrho)$ be such that
it admits a
blow-up limit~$K_1^\ast$
(according to~\eqref{Kast}).
Let $\Omega$ be an open bounded set with $C^1$-boundary and $E$ be a volume-constrained critical set of $\mathcal{C}$. 

Let $x_0\in \mathrm{Reg}_E\cap \partial\Omega$, $x_k\in\mathrm{Reg}_E\cap \Omega$
such that~$x_k\to x_0$ as~$k\to+\infty$
and~$r_k>0$ such that~$r_k\to0$ as~$k\to+\infty$.

Suppose that $H$ and $V$ are open
half-spaces such that
\begin{equation}\label{ndeotuerunfsdjkd34748768947068}
\Omega^{x_0,r_k}\rightarrow H\qquad {\mbox{and}}\qquad
E^{x_0,r_k}\rightarrow H\cap V\quad {\mbox{in }} L^1_\mathrm{loc}(\R^n)
\;{\mbox{as }}k\to +\infty.
\end{equation}

Set~$v_k:=\frac{x_k-x_0}{r_k}$ and suppose that there exists~$v\in H\cap\partial V$ such
that~$v_k\to v$ as~$k\to+\infty$.

Let~$\vartheta\in [0,\pi]$ be the angle between
the half-spaces $H$ and $V$, that is $H\cap V=J_{0,\vartheta}$ in the notation of~\eqref{Jtheta1theta2}.

Then, 
\begin{itemize}
\item[i)] if~$\vartheta=0$ then
$$ \lim_{k\to+\infty}r_k^{s_1}
\left[\mathbf{H}^{K_1}_{\partial E} (x_k)-\int_{\Omega^c}K_1(x_k-y)\,dy\right]=+\infty;
$$
\item[ii)] if~$\vartheta=\pi$ then
$$ \lim_{k\to+\infty}r_k^{s_1}
\left[\mathbf{H}^{K_1}_{\partial E} (x_k)-\int_{\Omega^c}K_1(x_k-y)\,dy\right]=-\infty;
$$
 \item[iii)] if~$\vartheta\in(0,\pi)$ then
$$ \lim_{k\to+\infty}r_k^{s_1}
\left[\mathbf{H}^{K_1}_{\partial E} (x_k)- \int_{\Omega^c}K_1(x_k-y)\,dy\right]=
\mathbf{H}^{K_1^\ast}_{\partial (H\cap V)} (v)-\int_{H^c}K_1^\ast (v-y)\,dy \in\R.
$$
\end{itemize}
\end{lem}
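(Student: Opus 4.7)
The plan is to perform a blow-up analysis at $x_0$ at scale $r_k$. Setting $E_k:=E^{x_0,r_k}$, $\Omega_k:=\Omega^{x_0,r_k}$, and introducing the rescaled kernel $K_1^{r_k}(\zeta):=r_k^{n+s_1}K_1(r_k\zeta)$, the change of variables $y=x_0+r_kz$ together with the decomposition $\chi_{E^c}-\chi_E=(\chi_{E^c\cap\Omega}-\chi_E)+\chi_{\Omega^c}$ (valid because $E\subseteq\Omega$) turns the quantity of interest into
\[
A_k:=r_k^{s_1}\!\left[\mathbf{H}^{K_1}_{\partial E}(x_k)-\int_{\Omega^c}K_1(x_k-y)\,dy\right]=\mathbf{H}^{K_1^{r_k}}_{\partial E_k}(v_k)-\int_{\Omega_k^c}K_1^{r_k}(v_k-z)\,dz.
\]
From \eqref{stimakernel} one verifies $K_1^{r_k}\in\mathbf{K}(n,s_1,\lambda,\varrho/r_k)\subseteq\mathbf{K}(n,s_1,\lambda,0)$ for $k$ large, and \eqref{Kast} yields $K_1^{r_k}\to K_1^*$ pointwise on $\R^n\setminus\{0\}$. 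Since $x_0\in\mathrm{Reg}_E$, $\partial E$ is a $C^{1,\alpha}$ manifold (with boundary on $\partial\Omega$) in some ball $B_{\rho_0}(x_0)$; after rescaling this gives a $C^{1,\alpha}$ graph representation of $\partial E_k$ around $v_k$ at scale $\rho_0/r_k\to+\infty$, with norms uniformly bounded in $k$.

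In the nondegenerate case iii), the condition $\vartheta\in(0,\pi)$ makes $\partial V$ transverse to $\partial H$, so that $v\in H\cap\partial V$ is a regular point of $\partial(H\cap V)$. The uniform $C^{1,\alpha}$ bound, combined via Arzel\`a--Ascoli with the $L^1_{\mathrm{loc}}$ convergence $E_k\to H\cap V$, forces $C^{1,\beta}$ convergence (with $\beta\in(s_1,\alpha)$) of the graphs describing $\partial E_k-v_k$ to the hyperplane $\partial V-v$. Lemma~\ref{lemmadiconv} then yields $\mathbf{H}^{K_1^{r_k}}_{\partial E_k}(v_k)\to\mathbf{H}^{K_1^*}_{\partial(H\cap V)}(v)$. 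Since $d:=\mathrm{dist}(v,\partial H)>0$, we eventually have $\mathrm{dist}(v_k,\Omega_k^c)\ge d/2$, so $K_1^{r_k}(v_k-\cdot)$ is dominated on $\Omega_k^c$ by $C|v_k-\cdot|^{-n-s_1}$, and the dominated convergence theorem together with $\Omega_k\to H$ in $L^1_{\mathrm{loc}}$ gives $\int_{\Omega_k^c}K_1^{r_k}(v_k-z)\,dz\to\int_{H^c}K_1^*(v-z)\,dz$. Summing these two contributions produces the finite limit claimed in iii).

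For the degenerate cases i) ($\vartheta=0$) and ii) ($\vartheta=\pi$), the limit set $H\cap V$ has empty interior or coincides with $H$. The formal limit integrand $K_1^*(v-z)\big(\chi_{H\setminus(H\cap V)}-\chi_{H\cap V}\big)(z)$ reduces to $+K_1^*(v-z)\chi_H(z)$ in case i) and to $-K_1^*(v-z)\chi_H(z)$ in case ii), both non-integrable at $z=v$ since $v$ lies interior to $H$. Decomposing $A_k$ into a near-field part on $B_\rho(v_k)$ (uniformly bounded thanks to the local $C^{1,\alpha}$ cancellation coming from the graph representation) and a far-field part, one applies Fatou's lemma to the positive (resp.\ negative) tail, using $\chi_{E_k}\to\chi_{H\cap V}$ and $\chi_{\Omega_k}\to\chi_H$ in $L^1_{\mathrm{loc}}$ along with the pointwise kernel convergence, to conclude $A_k\to+\infty$ in case i) and $A_k\to-\infty$ in case ii).

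The main obstacle is showing that the $C^{1,\alpha}$ regularity available at $x_0$ gives rise to a \emph{uniform-in-$k$} graph representation of $\partial E_k$ around $v_k$ with $C^{1,\alpha}$ norms controlled independently of $k$; only then can Lemma~\ref{lemmadiconv} be invoked at each $v_k$ and passed to the limit. A secondary difficulty is uniformly dominating the tail of the $\int_{\Omega_k^c}$ integral, which critically relies on $v$ being strictly interior to the open half-space $H$ (so that the integration domain stays at positive distance from the singularity of $K_1^{r_k}(v_k-\cdot)$).
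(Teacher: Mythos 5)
Your treatment of case iii) follows essentially the same route as the paper: rescale to obtain $K_{1,k}:=r_k^{n+s_1}K_1(r_k\,\cdot)$, invoke Lemma~\ref{lemmadiconv} for the convergence of the nonlocal mean curvature, and apply dominated convergence to the integral over $\Omega_k^c$ (using that $v$ lies strictly inside $H$ to keep the domain of integration away from the singularity). This part is fine.

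Your argument for cases i) and ii), however, has a genuine gap, and it is exactly backwards. Writing $A_k=\mathbf{H}^{K_{1,k}}_{\partial E_k}(v_k)-\int_{\Omega_k^c}K_{1,k}(v_k-z)\,dz$, the far-field contribution on $\R^n\setminus B_\rho(v_k)$ is \emph{always} uniformly bounded: $|K_{1,k}(v_k-z)|\le\lambda|v_k-z|^{-n-s_1}$, so the tail is controlled by $\lambda\int_{\R^n\setminus B_\rho}|w|^{-n-s_1}\,dw\le C/\rho^{s_1}$ independently of $k$. Hence the divergence \emph{must} come from the near field on $B_\rho(v_k)$, which you claim is ``uniformly bounded thanks to the local $C^{1,\alpha}$ cancellation.'' That cancellation only controls the p.v.\ of $\mathbf{H}^{K_{1,k}}_{\partial E_k}(v_k)$; it does not control the additional near-field term $\int_{\Omega_k^c\cap B_\rho(v_k)}K_{1,k}(v_k-z)\,dz$. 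When $\vartheta\in\{0,\pi\}$ the rescaled graphs $\psi_k$ (of $\partial\Omega_k$) and $u_k$ (of $\partial E_k$) become tangent to the same hyperplane, so $\mathrm{dist}(v_k,\Omega_k^c)\to0$ and that integral blows up. Equivalently, in the form $A_k=\int_{E_k^c\cap\Omega_k}K_{1,k}-\int_{E_k}K_{1,k}$: near $v_k$ the set $E_k$ degenerates to a thin slab while $E_k^c\cap\Omega_k$ fills half the ball, so both $\int_{E_k^c\cap\Omega_k\cap B_\rho}$ and the putative cancelling piece $\int_{E_k\cap B_\rho}$ lose their balance and you face an $\infty-\infty$ situation; Fatou applied to the unsigned parts gives you no usable conclusion. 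The paper's proof resolves this by a quantitative near-field analysis: linearize $u_k$ at $v_k'$, use the point reflection $z\mapsto2v_k-z$ to cancel the thin layer $\{x_n\in(\widetilde u_k-\delta_k,\widetilde u_k)\}$ against its mirror inside $E_k^c\cap\Omega_k$, and then observe that the remaining uncancelled region $\{x_n>\widetilde u_k+\delta_k\}\cap B_{1/2}(v_k)$ contains a ball $B_{\delta_k}(\zeta_k)$ at distance of order $\delta_k$ from $v_k$, which forces $A_k\ge c\,\delta_k^{-s_1}-C\to+\infty$. You would need an argument of this kind (a sharp lower bound on the near field, not a bounded-near-field claim) to establish i) and ii).
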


\begin{proof}
We start by proving~i). For this, we notice that
\begin{eqnarray*}
\Xi_k &:=& r_k^{s_1}\left[\mathbf{H}^{K_1}_{\partial E} (x_k)-\int_{\Omega^c}K_1(x_k-y)\,dy\right]\\
&=& r_k^{s_1}\left[\int_{E^c\cap\Omega}K_1(x_k-y)\,dy
-\int_{E}K_1(x_k-y)\,dy\right]\\
&=&r_k^{n+s_1}\left[\int_{(E^{x_0,r_k})^c\cap\Omega^{x_0,r_k}}K_1(x_k-x_0-r_k z)\,dz
-\int_{E^{x_0,r_k}}K_1(x_k-x_0-r_k z)\,dz\right]\\&=&
r_k^{n+s_1}\left[\int_{(E^{x_0,r_k})^c\cap\Omega^{x_0,r_k}}K_1\big(r_k(v_k- z)\big)\,dz
-\int_{E^{x_0,r_k}}K_1\big(r_k(v_k- z)\big)\,dz\right]
,\end{eqnarray*}
where the change of variable~$z=\frac{y-x_0}{r_k}$ has been used.

Now we point out that
\begin{equation*}
r_k^{n+s_1}
\int_{\R^n\setminus B_{1/2}(v_k)}K_1\big(r_k(v_k- z)\big)\,dz\le 
\lambda \int_{\R^n\setminus B_{1/2}(v_k)}\frac{dz}{|v_k- z|^{n+s_1}}\le C,
\end{equation*}
thanks to~\eqref{stimakernel}, for some positive constant~$C$, depending only on~$n$, $s_1$ and~$\lambda$.

{F}rom these observations we conclude that
\begin{equation}\begin{split}
\Xi_k \ge\,& 
r_k^{n+s_1}\left[\int_{(E^{x_0,r_k})^c\cap\Omega^{x_0,r_k} \cap B_{1/2}(v_k)}K_1\big(r_k(v_k- z)\big)\,dz\right.
\\&\qquad\qquad\left.
-\int_{E^{x_0,r_k}\cap B_{1/2}(v_k)}K_1\big(r_k(v_k- z)\big)\,dz\right] -C.\label{sjiwhweuiheirbger00}
\end{split}\end{equation}
Now we notice that~$E^{x_0,r_k} \cap B_{1/2}(v_k)$ can be written as a portion of space
included between the graphs of the functions describing~$\partial \Omega^{x_0,r_k}$
and~$\partial E^{x_0,r_k}$, that we denote respectively by~$\psi_k$ and~$u_k$. More precisely,
recalling that~$x_0\in\mathrm{Reg}_E\cap \partial\Omega$, in the
vicinity of~$x_0$ we can describe~$\partial \Omega$
and~$\partial E$ by the graphs of two functions~$\psi$ and~$u$, respectively, with~$\psi$
of class~$C^1$ and~$u$ of class~$C^{1,\alpha}$ with~$\alpha\in(s_1,1)$, and~$\psi(x_0')=u(x_0')=x_{0,n}$.
Up to a rotation, we also assume that~$\nabla\psi(x_0')=0$. 
In this way, 
\begin{equation}\label{sjwircbuy47uopoiuytredgjtrey}
\psi_k(x')=\frac{\psi(x_0'+r_k x')-x_{0,n}}{r_k} \qquad{\mbox{ and }}\qquad
u_k(x')=\frac{u(x_0'+r_k x')-x_{0,n}}{r_k}.
\end{equation}
Moreover, 
\begin{eqnarray*}&&
E^{x_0,r_k}\cap B_{1/2}(v_k)
= \Big\{x\in B_{1/2}(v_k)\,:\, x_n\in\big(\psi_k(x'), u_k(x')\big)
\Big\}
\end{eqnarray*}
and notice that, since~$E\subseteq\Omega$, it follows that~$\psi\le u$ and so~$\psi_k\le u_k$. As a result,
$$ \big\{ x\in B_{1/2}(v_k)\,:\, x_n> u_k(x')\big\}\subseteq (E^{x_0,r_k})^c\cap \Omega^{x_0,r_k}
\cap B_{1/2}(v_k).
$$
Hence, from~\eqref{sjiwhweuiheirbger00} we obtain that
\begin{equation}\begin{split}
\Xi_k \ge\,& 
r_k^{n+s_1}\left[\int_{
B_{1/2}(v_k)\cap \{ x_n> u_k(x')\}
}K_1\big(r_k(v_k- z)\big)\,dz
\right.\\&\qquad\qquad\left.-\int_{ B_{1/2}(v_k)\cap\{ x_n\in(\psi_k(x'), u_k(x'))\}}K_1\big(r_k(v_k- z)\big)\,dz\right]-C.
\label{sjiwhweuiheirbger}
\end{split}\end{equation}

We now define
$$ \widetilde{u}_k(x'):=u_k(v'_k) +\nabla u_k(v_k')\cdot(x'-v_k') 
$$
and we point out that, if~$|x'-v_k'|\le3$,
\begin{eqnarray*}
| u_k(x')-\widetilde{u}_k(x')|&=&\left|\frac{u(x_0'+r_k x')-u(x_0'+r_k v_k')}{r_k}-
\nabla u(x_0'+r_k v_k')\cdot (x'-v_k')\right|\\&=&
\left|\frac{u\big(x_k'+r_k (x'-v_k')\big)-u(x_k')}{r_k}-
\nabla u(x_k')\cdot (x'-v_k')\right|\\&=&
\left|\int_0^1 \nabla u\big(x_k'+t r_k (x'-v_k')\big)\cdot(x'-v_k')\,dt-
\nabla u(x_k')\cdot (x'-v_k')\right|\\&\le& \|u\|_{C^{1,\alpha}(B'_\rho(x'_0))}\, r_k^\alpha\,|x'-v_k'|^{1+\alpha},
\end{eqnarray*}
for a suitable~$\rho>0$. As a consequence, 
\begin{eqnarray*}
&& r_k^{n+s_1}\int_{(\{x_n> u_k(x')\}
\Delta \{x_n>\widetilde{u}_k(x')\})\cap B_{1/2}(v_k)}K_1\big(r_k(v_k- z)\big)\,dz\\&&\qquad\qquad\le
\lambda\int_{(\{x_n> u_k(x')\}
\Delta \{x_n>\widetilde{u}_k(x')\})\cap B_{1/2}(v_k)}\frac{dz}{|v_k- z|^{n+s_1}}\\&&\qquad
\qquad \le \lambda  \|u\|_{C^{1,\alpha}(B'_\rho(x'_0))}\, r_k^\alpha
\int_{B'_{1/2}(v'_k)}\frac{|v_k'-z'|^{1+\alpha}}{|v_k'- z'|^{n+s_1}}\,dz'
\\&&\qquad\qquad\le C \,r_k^\alpha,
\end{eqnarray*}
up to renaming~$C$, possibly in dependence of~$u$ as well.

Plugging this information into~\eqref{sjiwhweuiheirbger}, and possibly renaming~$C$ again, we obtain that
\begin{equation}\begin{split}
\Xi_k \ge\,& 
r_k^{n+s_1}\left[\int_{
B_{1/2}(v_k)\cap \{ x_n> \widetilde{u}_k(x')\}
}K_1\big(r_k(v_k- z)\big)\,dz
\right.\\&\qquad\qquad\left.-\int_{ B_{1/2}(v_k)\cap\{ x_n\in(\psi_k(x'), \widetilde{u}_k(x'))\}}K_1\big(r_k(v_k- z)\big)\,dz\right]-C.
\label{swfryuuytpuotuioiu76986978690}
\end{split}\end{equation}

Now, from~\eqref{sjwircbuy47uopoiuytredgjtrey} we see that~$\psi_k(x')\to\nabla\psi(x'_0)\cdot x'$
and~$u_k(x')\to\nabla u(x'_0)\cdot x'$ as~$k\to+\infty$.
Hence, if~$\vartheta=0$ it follows that~$\nabla\psi(x'_0)=\nabla u(x'_0)$.
Consequently, if~$x'\in B'_{1/2}(v'_k)$ then
\begin{equation}\label{CON2STDELTAk}
\begin{split}&\,
|\widetilde{u}_k(x')-\psi_k(x')|\\
=\,& \left|u_k(v'_k) +\nabla u_k(v_k')\cdot(x'-v_k') -
\frac{\psi(x_0'+r_k x')-\psi(x_{0}')}{r_k}
\right|\\ =\,&
\left| \frac{u(x_0'+r_k v_k')-u(x_{0}')}{r_k}+\nabla u(x_0'+r_k v_k')\cdot(x'-v_k') 
- \int_0^1 \nabla\psi(x_0'+tr_k x')\cdot x'\,dt
\right|\\=\,&
\left|\int_0^1\nabla u(x_0'+tr_k v_k')\cdot v_k'\,dt+\nabla u(x_0'+r_k v_k')\cdot(x'-v_k') 
- \int_0^1 \nabla\psi(x_0'+tr_k x')\cdot x'\,dt
\right|\\ \le\,&\left|\int_0^1\nabla u(x_0')\cdot v_k'\,dt+\nabla u(x_0')\cdot(x'-v_k') 
- \int_0^1 \nabla\psi(x_0')\cdot x'\,dt
\right|+\delta_k\\=\,&\delta_k,
\end{split}\end{equation}
for a suitable~$\delta_k$ such that~$\delta_k\to0$ as~$k\to+\infty$.

This and~\eqref{swfryuuytpuotuioiu76986978690} give that
\begin{equation}\begin{split}
\Xi_k \ge\,& 
r_k^{n+s_1}\left[\int_{
B_{1/2}(v_k)\cap \{ x_n> \widetilde{u}_k(x')\}
}K_1\big(r_k(v_k- z)\big)\,dz
\right.\\&\qquad\qquad\left.-\int_{ B_{1/2}(v_k)\cap\{ x_n\in(\widetilde{u}_k(x')-\delta_k, \widetilde{u}_k(x'))\}}K_1\big(r_k(v_k- z)\big)\,dz\right]-C.
\label{swfryuuytpuotuioiu76986978690BIS}
\end{split}\end{equation}
Now we define the map~$Y(z):=2v_k-z$ and we show that
\begin{equation}\label{flokki}
Y\Big( B_{1/2}(v_k)\cap\{ x_n\in(\widetilde{u}_k(x')-\delta_k, \widetilde{u}_k(x'))\}\Big)\subseteq
B_{1/2}(v_k)\cap\{ x_n\in(\widetilde{u}_k(x'), \widetilde{u}_k(x')+\delta_k)\}.
\end{equation}
Indeed, let~$z\in B_{1/2}(v_k)\cap\{ x_n\in(\widetilde{u}_k(x')-\delta_k, \widetilde{u}_k(x'))\}$
and call~$y:=Y(z)$. We have that~$ |y-v_k|=|v_k-z|<1/2$. Moreover,
\begin{eqnarray*}
y_n-\widetilde{u}_k(y')&=&2v_{k,n}-z_n-\widetilde{u}_k\big(2v_k'-z'\big)\\&=&
2u_k(v_k')-z_n-\widetilde{u}_k\big(2v_k'-z'\big)\\&\in &\Big(2u_k(v_k')-\widetilde{u}_k(z')
-\widetilde{u}_k\big(2v_k'-z'\big),\,
2u_k(v_k')-\widetilde{u}_k(z')-\widetilde{u}_k\big(2v_k'-z'\big)+\delta_k
\Big)\\&=&\Big(2u_k(v_k')
-2\widetilde{u}_k(v_k'),\,
2u_k(v_k')-2\widetilde{u}_k(v_k')+\delta_k
\Big)\\&=&\big(0,\,\delta_k\big)
\end{eqnarray*}
and the proof of~\eqref{flokki} is thus complete.

Using~\eqref{flokki} and changing variable~$y=Y(z)$ we see that
\begin{eqnarray*}&&
\int_{ B_{1/2}(v_k)\cap\{ x_n\in(\widetilde{u}_k(x')-\delta_k, \widetilde{u}_k(x'))\}}K_1\big(r_k(v_k- z)\big)\,dz\\
&\le& \int_{ B_{1/2}(v_k)\cap\{ x_n\in(\widetilde{u}_k(x'), \widetilde{u}_k(x')+\delta_k)\}}K_1\big(r_k(y-v_k)\big)\,dy\\&=&\int_{ B_{1/2}(v_k)\cap\{ x_n\in(\widetilde{u}_k(x'), \widetilde{u}_k(x')+\delta_k)\}}K_1\big(r_k(v_k-y)\big)\,dy.
\end{eqnarray*}
Combining this and~\eqref{swfryuuytpuotuioiu76986978690BIS}, and recalling~\eqref{stimakernel},
we arrive at
\begin{equation}\begin{split}\label{KSJLDboqkwfmCCijdf1a}
\Xi_k \ge\,& 
r_k^{n+s_1}\int_{
B_{1/2}(v_k)\cap \{ x_n> \widetilde{u}_k(x')+\delta_k\}
}K_1\big(r_k(v_k- z)\big)\,dz-C\\ \ge\,& \frac1\lambda\int_{
B_{1/2}(v_k)\cap \{ x_n> \widetilde{u}_k(x')+\delta_k\}
}\frac{dz}{|v_k- z|^{n+s_1}}\,dz-C.
\end{split}\end{equation}

Now we define
\begin{equation}\label{KSJLDboqkwfmCCijdf1b-prepa} \nu_k:=\frac{(-\nabla u_k(v'_k),1)}{\sqrt{1+|\nabla u_k(v'_k)|^2}}\qquad{\mbox{and}}\qquad
\zeta_k:=v_k+3\delta_k\nu_k\end{equation} and we
claim that, if~$k$ is sufficiently large,
\begin{equation}\begin{split}\label{KSJLDboqkwfmCCijdf1b}
B_{\delta_k}(\zeta_k)\subseteq B_{1/2}(v_k)\cap \{ x_n> \widetilde{u}_k(x')+\delta_k\}.
\end{split}\end{equation}
To check this, we observe that
\begin{equation*}
\lim_{k\to+\infty}|\nabla u_k(v_k')|=\lim_{k\to+\infty}|\nabla u(x_k')|=|\nabla u(x_0')|=|\nabla\psi(x_0')|=0
\end{equation*}
and consequently
\begin{equation}\label{KSJLDboqkwfmCCijdf1c}
\lim_{k\to+\infty}
\frac{3}{\sqrt{1+|\nabla u_k(v'_k)|^2}}-4|\nabla u_k(v_k')|-2=
1.\end{equation}
Now, pick~$w\in B_{\delta_k}(\zeta_k)$. We have that
$$ |w-v_k|\le|w-\zeta_k|+|\zeta_k-v_k|<\delta_k+3\delta_k=4\delta_k$$
and thus~$w\in B_{1/2}(v_k)$ as long as~$k$ is large enough.

Moreover,
\begin{eqnarray*}
w_n-\widetilde u_k(w')-\delta_k&\ge& (\zeta_{k,n}-\delta_k)
-u_k(v_k')-\nabla u_k(v_k')(w'-v_k')-\delta_k\\&=&
\left(v_{k,n}+\frac{3\delta_k}{\sqrt{1+|\nabla u_k(v'_k)|^2}}-\delta_k\right)
-v_{k,n}-\nabla u_k(v_k')(w'-v_k')-\delta_k\\&=&
\frac{3\delta_k}{\sqrt{1+|\nabla u_k(v'_k)|^2}}-\nabla u_k(v_k')(w'-v_k')-2\delta_k\\&\ge&
\frac{3\delta_k}{\sqrt{1+|\nabla u_k(v'_k)|^2}}-|\nabla u_k(v_k')|\,|w'-v_k'|-2\delta_k\\&\ge&
\left(\frac{3 }{\sqrt{1+|\nabla u_k(v'_k)|^2}}-4|\nabla u_k(v_k')|-2\right)\delta_k\\&>&0,
\end{eqnarray*}
thanks to~\eqref{KSJLDboqkwfmCCijdf1c}.

The proof of~\eqref{KSJLDboqkwfmCCijdf1b} is thereby complete.

Thus, exploiting~\eqref{KSJLDboqkwfmCCijdf1a} and~\eqref{KSJLDboqkwfmCCijdf1b}, we find that
\begin{equation*}\begin{split}
\Xi_k \ge \int_{B_{\delta_k}(\zeta_k)}\frac{dz}{|v_k- z|^{n+s_1}}\,dz-C.
\end{split}\end{equation*}
Notice also that if~$z\in B_{\delta_k}(\zeta_k)$ then~$|v_k-z|\le|v_k-\zeta_k|+|\zeta_k-z|\le3\delta_k+\delta_k=4\delta_k$ and accordingly
\begin{equation*}\begin{split}
\Xi_k \ge \int_{B_{\delta_k}(\zeta_k)}\frac{dz}{(4\delta_k)^{n+s_1}}\,dz-C=\frac{c}{\delta_k^{s_1}}-C,
\end{split}\end{equation*}
for some~$c>0$. This establishes the claim in~i), as desired.

The claim in~ii) can be proved similarly.

As for the claim in~iii), we suppose that~$\vartheta\in(0,\pi)$
and, for every $k\in \mathbb{N}$, we denote by~$F_k$ the set obtained
by a suitable rigid motion of the set~$E^{x_0,r_k}-v_k$ so as to have that $0\in \partial F_k$ and 
\begin{equation}\label{310BISBISBIS}
\mathbf{C}\cap F_k=\left\{x\in \mathbf{C}: x_n\leq u_k(x')\right\},
\end{equation}
for some $u_k\in C^{1,\alpha}(\mathbb{R}^{n-1})$.
Let also~$u$ be the linear function such that $u_k\rightarrow u$ in $C^{1,\alpha}(\mathbf{D})$
as~$k\to+\infty$.
We notice that, by~\eqref{ndeotuerunfsdjkd34748768947068}, up to a rigid motion,
\begin{equation}\label{convinL1bBIS}
F_k\rightarrow F:= H\cap V-v\quad \text{in $L^1_{\mathrm{loc}}(\mathbb{R}^n)$ as~$k\to+\infty$}.
\end{equation}
 
Furthermore, recalling the definition of
mean curvature in~\eqref{meancurvature} and exploiting the change of variable~$y=x_0+r_kz$,
we see that
\begin{equation}\label{meancurv2BIS}\begin{split}
\mathbf{H}^{K_1}_{\partial E} (x_k)=\,& 
\int_{\R^n} K_1(x_k-y)\bigl(\chi_{E^c}(y)-\chi_E(y)\bigr)\, dy
\\ =\,&
r_k^{-s_1}\int_{\mathbb{R}^n}r_k^{n+s_1}K_1(x_k-x_0-r_k z)\left(\chi_{(E^{x_0,r_k})^c}(z)-\chi_{E^{x_0,r_k}}(z)\right)\,dz.\end{split}
\end{equation}
We also introduce, for every $\zeta\in \mathbb{R}^n\setminus\{0\}$, the kernel
\begin{equation*}
K_{1,k}(\zeta):=r_k^{n+s_1}K_1(r_k\zeta),
\end{equation*}
and we observe that, in light of~\eqref{meancurv2BIS},
\begin{equation}\label{hdu3yv765b4385BIS}
\mathbf{H}^{K_1}_{\partial E} (x_k)=r_k^{-s_1}\mathbf{H}^{K_{1,k}}_{\partial F_k} (0).
\end{equation}
Furthermore, we recall that~$K_{1,k}\rightarrow K^\ast_1$ pointwise in $\mathbb{R}^n\setminus\{0\}$, hence one can infer from~\eqref{310BISBISBIS}, \eqref{convinL1bBIS}, \eqref{hdu3yv765b4385BIS}
and Lemma~\ref{lemmadiconv} that
\begin{equation}\label{meancurvconvaddoritrthBIS}
\lim_{k\to +\infty} r_k^{s_1}\mathbf{H}^{K_1}_{\partial E} (x_k)=\mathbf{H}^{K_1^\ast}_{\partial (H\cap V)} (v).
\end{equation}

Moreover, since~$\vartheta\in(0,\pi)$,
one can use the Lebesgue's Dominated Convergence Theorem and find that
\begin{equation*}
\begin{split}\lim_{k\to+\infty}
r_k^{s_1}\int_{\Omega^c} K_1(x_k-y)\,dy=\,&\lim_{k\to+\infty}\int_{(\Omega^{x_0,r_k})^c} r_k^{n+s_1} K_1(r_k(v_k-y))\,dy\\=\,&\int_{H^c}K_1^\ast(v-y)\,dy .
\end{split}
\end{equation*}
{F}rom this and~\eqref{meancurvconvaddoritrthBIS} we obtain the desired result in~iii).
\end{proof}

Now we showcase a refinement of Lemma~\ref{angolettilemma}
which will be needed to exclude the degenerate blow-up limits~$\vartheta\in\{0,\pi\}$
in the case~$s_1>s_2$.

\begin{lem}\label{lemmaangoletti2}
Let~$s_1>s_2$, $K_1\in\mathbf{K}^2(n,s_1,\lambda,\varrho)$ and~$K_2\in\mathbf{K}^2(n,s_2,\lambda,\varrho)$.
Let $\Omega$ be an open bounded set with $C^1$-boundary and $E$ be a volume-constrained critical set of $\mathcal{C}$. 

Let $x_0\in \mathrm{Reg}_E\cap \partial\Omega$,
$x_k\in\mathrm{Reg}_E\cap \Omega$
such that~$x_k\to x_0$ as~$k\to+\infty$
and~$r_k>0$ such that~$r_k\to0$ as~$k\to+\infty$.

Suppose that $H$ and $V$ are open
half-spaces such that
\begin{equation*}
\Omega^{x_0,r_k}\rightarrow H\qquad {\mbox{and}}\qquad
E^{x_0,r_k}\rightarrow H\cap V\quad {\mbox{in }} L^1_\mathrm{loc}(\R^n)
\;{\mbox{as }}k\to +\infty.
\end{equation*}
Let~$\vartheta\in [0,\pi]$ be the angle between
the half-spaces $H$ and $V$, that is $H\cap V=J_{0,\vartheta}$ in the notation of~\eqref{Jtheta1theta2}.

Then, 
\begin{itemize}
\item[i)] if~$\vartheta=0$ then
$$ \lim_{k\to+\infty}r_k^{s_1}
\left[\mathbf{H}^{K_1}_{\partial E} (x_k)-\int_{\Omega^c}K_1(x_k-y)\,dy\right]
+\sigma\, r_k^{s_1-s_2}
\,r_k^{s_2}\int_{\Omega^c}K_2(x_k-y)\,dy
=+\infty;
$$
\item[ii)] if~$\vartheta=\pi$ then
$$ \lim_{k\to+\infty}r_k^{s_1}
\left[\mathbf{H}^{K_1}_{\partial E} (x_k)-\int_{\Omega^c}K_1(x_k-y)\,dy\right]+\sigma\, r_k^{s_1-s_2}
\,r_k^{s_2}\int_{\Omega^c}K_2(x_k-y)\,dy=-\infty.
$$
\end{itemize}
\end{lem}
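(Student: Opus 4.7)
The starting observation is that the first bracket appearing in i) and ii) is precisely the quantity already analyzed in Lemma~\ref{angolettilemma}. Thus cases i) and ii) of that lemma give, respectively,
\[
r_k^{s_1}\!\left[\mathbf{H}^{K_1}_{\partial E}(x_k)-\int_{\Omega^c}K_1(x_k-y)\,dy\right]\longrightarrow +\infty\quad\text{if }\vartheta=0,
\]
and the analogous $-\infty$ limit if $\vartheta=\pi$. Hence the entire proof reduces to showing that the extra term
\[
\sigma\,r_k^{s_1-s_2}\,\bigl(r_k^{s_2}\!\int_{\Omega^c}K_2(x_k-y)\,dy\bigr)\,=\,\sigma\,r_k^{s_1}\!\int_{\Omega^c}K_2(x_k-y)\,dy
\]
stays bounded (in fact, is $o(1)$) as $k\to+\infty$, so that it cannot compensate the divergence contributed by the $K_1$-bracket.

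\textbf{Bounding the $K_2$-tail.} My plan is to exploit the upper bound $K_2(\zeta)\leq\lambda|\zeta|^{-n-s_2}$ from \eqref{stimakernel} together with a rescaling. Setting $z:=(y-x_0)/r_k$ and $v_k:=(x_k-x_0)/r_k$, we obtain
\[
r_k^{s_2}\!\int_{\Omega^c}K_2(x_k-y)\,dy\,\leq\,\lambda\int_{(\Omega^{x_0,r_k})^c}\frac{dz}{|v_k-z|^{n+s_2}}.
\]
Since $\partial\Omega$ is of class $C^1$, the rescaled sets $\Omega^{x_0,r_k}$ converge to the half-space $H$, and $v_k$ remains at uniformly positive distance from $\partial\Omega^{x_0,r_k}$ thanks to the regularity encoded in $x_k\in\mathrm{Reg}_E\cap\Omega$ (cf.~the analogous rescaling in the proof of Lemma~\ref{angolettilemma}). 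By dominated convergence, the right-hand side converges to the finite quantity $\lambda\int_{H^c}|v-z|^{-n-s_2}\,dz$, and is therefore bounded uniformly in $k$. Combined with the fact that $s_1>s_2$ forces $r_k^{s_1-s_2}\to 0$, the correction term is infinitesimal.

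\textbf{Conclusion.} Adding the infinitesimal correction to the divergent bracket preserves the divergence, giving $+\infty$ when $\vartheta=0$ and $-\infty$ when $\vartheta=\pi$, as claimed.

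\textbf{Main obstacle.} The only delicate ingredient is the uniform control of the rescaled $K_2$-integral: namely, one has to rule out that $v_k$ approaches $\partial\Omega^{x_0,r_k}$ so fast that the integral $\int_{(\Omega^{x_0,r_k})^c}|v_k-z|^{-n-s_2}\,dz$ blows up. This is where the $C^1$ hypothesis on $\partial\Omega$ and the regularity $x_k\in\mathrm{Reg}_E$ are essential; they provide the geometric control needed to apply dominated convergence. Once this is in place, the gap between the exponents $s_1$ and $s_2$ does the rest, because the $K_1$-singularity at scale $r_k$ is strictly more intense than the $K_2$-one.
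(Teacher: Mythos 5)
Your proof has a genuine gap, and it is precisely the one you flag at the end and then wave away.

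You reduce the claim to showing that $r_k^{s_2}\int_{\Omega^c}K_2(x_k-y)\,dy$ stays bounded, so that the factor $r_k^{s_1-s_2}\to 0$ kills the correction term. But when $\vartheta\in\{0,\pi\}$ this quantity is \emph{not} bounded. Indeed, setting $v_k:=(x_k-x_0)/r_k$ and using~\eqref{stimakernel},
\[
r_k^{s_2}\int_{\Omega^c}K_2(x_k-y)\,dy \;\asymp\; \int_{(\Omega^{x_0,r_k})^c}\frac{dz}{|v_k-z|^{n+s_2}} \;\gtrsim\; d_k^{-s_2},\qquad d_k:=\mathrm{dist}\big(v_k,(\Omega^{x_0,r_k})^c\big),
\]
and $d_k=\mathrm{dist}(x_k,\Omega^c)/r_k\to 0$ when $\vartheta=0$: the rescaled droplet $E^{x_0,r_k}$ degenerates onto $\partial H$, so $v_k\in\partial E^{x_0,r_k}\cap\Omega^{x_0,r_k}$ is forced onto $\partial\Omega^{x_0,r_k}$. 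The hypotheses $\partial\Omega\in C^1$ and $x_k\in\mathrm{Reg}_E\cap\Omega$ give no lower bound on $d_k$, so Dominated Convergence is not applicable. Note that this is consistent with the paper's own proof of Lemma~\ref{angolettilemma} part iii), where the analogous Dominated Convergence step is explicitly conditioned on $\vartheta\in(0,\pi)$.

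Because both the $K_1$-bracket (rate $\gtrsim\delta_k^{-s_1}$) and the $K_2$-correction (rate $\lesssim r_k^{s_1-s_2}d_k^{-s_2}$) can blow up simultaneously, one cannot treat them separately and a naive rate comparison does not close either, since the proof of Lemma~\ref{angolettilemma} gives a lower bound only in terms of $\delta_k\gtrsim d_k$, which is the wrong direction. The paper therefore does not separate the two contributions. Instead it revisits the construction of Lemma~\ref{angolettilemma} keeping the $K_1$ and $K_2$ integrands together, and uses the \emph{pointwise} domination on the rescaled unit ball $B_{1/2}(v_k)$,
\[
|\sigma|\,r_k^{s_1-s_2}\,r_k^{n+s_2}K_2\big(r_k(v_k-z)\big)\le \tfrac12\, r_k^{n+s_1}K_1\big(r_k(v_k-z)\big)\qquad\text{for }k\text{ large},
\]
which is where $s_1>s_2$ enters. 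This absorbs the potentially divergent $K_2$-contribution into half of the $K_1$-contribution, after which the argument of Lemma~\ref{angolettilemma} yields the divergence of the combined quantity. Your scheme would work only if one had uniform control $d_k\gtrsim 1$, which is exactly what fails in the degenerate regimes the lemma is designed to exclude.
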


\begin{proof} We focus on the proof of~i), since the proof of~ii) is similar, up to sign changes.
To this end, we exploit
the notation introduced in Lemma~\ref{angolettilemma}, and specifically~\eqref{swfryuuytpuotuioiu76986978690}, 
and we set~$v_k:=\frac{x_k-x_0}{r_k}$,
to see that
\begin{equation}\label{okmsa0rhAyhHcoakmda}\begin{split}
\Upsilon_k:=\,&
r_k^{s_1}
\left[\mathbf{H}^{K_1}_{\partial E} (x_k)-\int_{\Omega^c}K_1(x_k-y)\,dy\right]
+\sigma\, r_k^{s_1-s_2}
\,r_k^{s_2}\int_{\Omega^c}K_2(x_k-y)\,dy\\
\ge\,&\Xi_k -|\sigma|\, r_k^{s_1-s_2}
\,r_k^{s_2}\int_{\Omega^c}K_2(x_k-y)\,dy
\\
\ge\,& 
r_k^{n+s_1}\left[\int_{
B_{1/2}(v_k)\cap \{ x_n> \widetilde{u}_k(x')\}
}K_1\big(r_k(v_k- z)\big)\,dz
\right.\\&\qquad\qquad\left.-\int_{ B_{1/2}(v_k)\cap\{ x_n\in(\psi_k(x'), \widetilde{u}_k(x'))\}}K_1\big(r_k(v_k- z)\big)\,dz\right]\\&\qquad\qquad
-|\sigma|\, r_k^{s_1-s_2}
\,r_k^{n+s_2}\int_{\R^n\setminus\Omega^{x_0,r_k}}K_2\big(r_k(v_k-z)\big)\,dz
-C
\\
\ge\,& 
r_k^{n+s_1}\left[\int_{
B_{1/2}(v_k)\cap \{ x_n> \widetilde{u}_k(x')\}
}K_1\big(r_k(v_k- z)\big)\,dz
\right.\\&\qquad\qquad\left.-\int_{ B_{1/2}(v_k)\cap\{ x_n\in(\psi_k(x'), \widetilde{u}_k(x'))\}}K_1\big(r_k(v_k- z)\big)\,dz\right]\\&\qquad\qquad
-|\sigma|\, r_k^{s_1-s_2}
\,r_k^{n+s_2}\int_{B_{1/2}(v_k)\cap\{ x_n<\psi_k(x')\}}K_2\big(r_k(v_k-z)\big)\,dz
-C,
\end{split}\end{equation}
up to changing~$C>0$ from line to line.

Also, by~\eqref{CON2STDELTAk},
\begin{eqnarray*}&& \int_{B_{1/2}(v_k)\cap\{ x_n<\psi_k(x')\}}K_2\big(r_k(v_k-z)\big)\,dz\\&=&
\int_{B_{1/2}(v_k)\cap\{ x_n\in(\widetilde{u}_k(x')-\delta_k,\psi_k(x'))\}}K_2\big(r_k(v_k-z)\big)\,dz
+\int_{B_{1/2}(v_k)\cap\{ x_n<\widetilde{u}_k(x')-\delta_k\}}K_2\big(r_k(v_k-z)\big)\,dz.
\end{eqnarray*}
Therefore, we can write~\eqref{okmsa0rhAyhHcoakmda} as
\begin{equation}\label{okmsa0rhAyhHcoakmda1}\begin{split}
\Upsilon_k\ge\,&
r_k^{n+s_1}\left[\int_{
B_{1/2}(v_k)\cap \{ x_n> \widetilde{u}_k(x')\}
}K_1\big(r_k(v_k- z)\big)\,dz
\right.\\&\qquad\qquad\left.-\int_{ B_{1/2}(v_k)\cap\{ x_n\in(\psi_k(x'), \widetilde{u}_k(x'))\}}K_1\big(r_k(v_k- z)\big)\,dz\right]\\&\qquad\qquad
-|\sigma|\, r_k^{s_1-s_2}\,r_k^{n+s_2}
\int_{B_{1/2}(v_k)\cap\{ x_n\in(\widetilde{u}_k(x')-\delta_k,\psi_k(x'))\}}K_2\big(r_k(v_k-z)\big)\,dz\\&\qquad\qquad
-|\sigma|\, r_k^{s_1-s_2}\,r_k^{n+s_2}\int_{B_{1/2}(v_k)\cap\{ x_n<\widetilde{u}_k(x')-\delta_k\}}K_2\big(r_k(v_k-z)\big)\,dz
-C.
\end{split}\end{equation}
Now we set
\begin{equation} \label{OJKHSN9q0woirfhwoiehgf49uiefgbv-2-r0ije}
{\mathcal{Z}}_k(x):=\max\Big\{
r_k^{n+s_1}\,K_1(x),\;\,
|\sigma|\, r_k^{s_1-s_2}
\,r_k^{n+s_2} \,K_2(x)
\Big\}.\end{equation}
In this way, we deduce from~\eqref{okmsa0rhAyhHcoakmda1} that
\begin{equation}\label{FRE0101}\begin{split}
\Upsilon_k\ge\,&
r_k^{n+s_1}\int_{
B_{1/2}(v_k)\cap \{ x_n> \widetilde{u}_k(x')\}
}K_1\big(r_k(v_k- z)\big)\,dz\\&\qquad\qquad
-\int_{ B_{1/2}(v_k)\cap\{ x_n\in(\widetilde{u}_k(x')-\delta_k, \widetilde{u}_k(x'))\}}{\mathcal{Z}}_k\big(r_k(v_k- z)\big)\,dz\\&\qquad\qquad
-|\sigma|\, r_k^{s_1-s_2}\,r_k^{n+s_2}\int_{B_{1/2}(v_k)\cap\{ x_n<\widetilde{u}_k(x')-\delta_k\}}K_2\big(r_k(v_k-z)\big)\,dz
-C.
\end{split}\end{equation}
Let~$Y(z):=2v_k-z$. We also use the short notation
\begin{eqnarray*}&&
{\mathcal{P}}_k:=B_{1/2}(v_k)\cap \{ x_n> \widetilde{u}_k(x')\},\\&&
{\mathcal{Q}}_k:=B_{1/2}(v_k)\cap\{ x_n\in(\widetilde{u}_k(x')-\delta_k, \widetilde{u}_k(x'))\}\\
{\mbox{and }}&&{\mathcal{R}}_k:=B_{1/2}(v_k)\cap\{ x_n<\widetilde{u}_k(x')-\delta_k\}
.\end{eqnarray*}
We know from~\eqref{flokki} that
\begin{equation}\label{FRE0102}
Y({\mathcal{Q}}_k)\subseteq
B_{1/2}(v_k)\cap\{ x_n\in(\widetilde{u}_k(x'), \widetilde{u}_k(x')+\delta_k)\}
\subseteq{\mathcal{P}}_k.
\end{equation}
We also claim that
\begin{equation}\label{FRE0103}
Y({\mathcal{R}}_k)\subseteq
{\mathcal{P}}_k\setminus Y({\mathcal{Q}}_k).
\end{equation}
Indeed, if there were a point~$y\in Y({\mathcal{Q}}_k)\cap Y({\mathcal{R}}_k)$
we would have that~$y=2v_k-Q=2v_k-R$ for some~$Q\in{\mathcal{Q}}_k$ and~$R\in{\mathcal{R}}_k$, but this would entail that~$Q=R\in{\mathcal{Q}}_k\cap{\mathcal{R}}_k=\varnothing$, which is a contradiction. This shows that~$Y({\mathcal{R}}_k)$ lies in the complement of~$Y({\mathcal{Q}}_k)$, thus, to complete the proof of~\eqref{FRE0103}, it only remains to show that~$Y({\mathcal{R}}_k)\subseteq
{\mathcal{P}}_k$. To this end, we observe that if~$ z_n<\widetilde{u}_k(z')-\delta_k$
and~$y=Y(z)$, then
\begin{eqnarray*}&&
y_n-\widetilde{u}_k(y')=2v_{k,n}-z_n-\widetilde{u}_k(y')=2\widetilde{u}_k(v'_k)-z_n-\widetilde{u}_k(2v'_k-z')\\&&\qquad>
2\widetilde{u}_k(v'_k)-\widetilde{u}_k(z')+\delta_k-\widetilde{u}_k(2v'_k-z')=\delta_k>0.
\end{eqnarray*}
This completes the proof of~\eqref{FRE0103}.

Hence, by~\eqref{FRE0101}, \eqref{FRE0102} and~\eqref{FRE0103},
\begin{equation}\label{FRE01044}\begin{split}
\Upsilon_k\ge\,&
r_k^{n+s_1}\int_{
{\mathcal{P}}_k
}K_1\big(r_k(v_k- z)\big)\,dz
-\int_{ {\mathcal{Q}}_k}{\mathcal{Z}}_k\big(r_k(v_k- z)\big)\,dz\\&\qquad\qquad
-|\sigma|\, r_k^{s_1-s_2}\,r_k^{n+s_2}\int_{{\mathcal{R}}_k}K_2\big(r_k(v_k-z)\big)\,dz
-C\\
=\,&
r_k^{n+s_1}\int_{
{\mathcal{P}}_k
}K_1\big(r_k(v_k- z)\big)\,dz
-\int_{ Y({\mathcal{Q}}_k)}{\mathcal{Z}}_k\big(r_k(v_k- y)\big)\,dy\\&\qquad\qquad
-|\sigma|\, r_k^{s_1-s_2}\,r_k^{n+s_2}\int_{Y({\mathcal{R}}_k)}K_2\big(r_k(v_k-y)\big)\,dy
-C\\
=\,&r_k^{n+s_1}
\int_{
{\mathcal{P}}_k\setminus( Y({\mathcal{Q}}_k)\cup Y({\mathcal{R}}_k))
}K_1\big(r_k(v_k- z)\big)\,dz
+
\int_{
Y({\mathcal{Q}}_k)
}\alpha_k(z)\,dz
+
\int_{Y({\mathcal{R}}_k)
}\beta_k(z)\,dz-C,
\end{split}\end{equation}
where
\begin{eqnarray*}
&&\alpha_k(z):=r_k^{n+s_1}\,K_1\big(r_k(v_k- z)\big)-{\mathcal{Z}}_k\big(r_k(v_k- z)\big)\\
{\mbox{and }}&&\beta_k(z):=r_k^{n+s_1}\,K_1\big(r_k(v_k- z)\big)-
|\sigma|\, r_k^{s_1-s_2}\,r_k^{n+s_2}\,K_2\big(r_k(v_k-z)\big).
\end{eqnarray*}
We stress that up to now the condition~$s_1>s_2$ has not been used.
We are going to exploit it now to bound~$\alpha_k$ and~$\beta_k$.
For this, we note that, if~$z\in B_{1/2}(v_k)$ and~$k$ is large enough, then
\begin{eqnarray*}&&
|\sigma|\, r_k^{s_1-s_2}\,r_k^{n+s_2} \,K_2\big(r_k(v_k-z)\big)\le
\frac{\lambda\,|\sigma|\, r_k^{s_1-s_2} }{|v_k-z|^{n+s_2}}\le
\frac{\lambda\,|\sigma|\, r_k^{s_1-s_2} }{|v_k-z|^{n+s_1}}=
\frac{\lambda\,|\sigma|\, r_k^{s_1-s_2}\,r_k^{n+s_1} }{\big|r_k(v_k-z)\big|^{n+s_1}}\\&&\qquad\qquad\le
\lambda^2\,|\sigma|\, r_k^{s_1-s_2}\,r_k^{n+s_1} \,K_1\big(r_k(v_k-z)\big)
\le \frac12\,r_k^{n+s_1} \,K_1\big(r_k(v_k-z)\big).
\end{eqnarray*}
This and~\eqref{OJKHSN9q0woirfhwoiehgf49uiefgbv-2-r0ije}
entail that if~$z\in B_{1/2}(v_k)$ and~$k$ is large enough, then~${\mathcal{Z}}_k\big(r_k(v_k-z)\big)=r_k^{n+s_1}\,K_1\big(r_k(v_k-z)\big)$, and therefore~$\alpha_k(z)=0$.
In addition,
$$ \beta_k(z)\ge\frac12\,r_k^{n+s_1} \,K_1\big(r_k(v_k-z)\big) .$$
{F}rom these observations and~\eqref{FRE01044} we arrive at
\begin{equation}\label{24hmYo2q453ty435wrtfg2qwqadf43eg33unglaw}
\begin{split}
\Upsilon_k&\,\ge\,r_k^{n+s_1}
\int_{
{\mathcal{P}}_k\setminus( Y({\mathcal{Q}}_k)\cup Y({\mathcal{R}}_k))
}K_1\big(r_k(v_k- z)\big)\,dz+\frac12\,r_k^{n+s_1}\,\int_{Y({\mathcal{R}}_k)
}K_1\big(r_k(v_k- z)\big)\,dz-C\\
&\ge\,\frac12\,r_k^{n+s_1}
\int_{
{\mathcal{P}}_k\setminus Y({\mathcal{Q}}_k)
}K_1\big(r_k(v_k- z)\big)\,dz-C.
\end{split}
\end{equation}

Now we utilize the notation in~\eqref{KSJLDboqkwfmCCijdf1b-prepa},
the inclusion in~\eqref{KSJLDboqkwfmCCijdf1b} and the first inclusion in~\eqref{FRE0102} to see that
\begin{equation}\label{24hmYo2q453ty435wrtfg2qwqadf43eg33unglawBIS}\begin{split}
{\mathcal{P}}_k\setminus Y({\mathcal{Q}}_k) \supseteq\,&
{\mathcal{P}}_k\setminus\Big( B_{1/2}(v_k)\cap\{ x_n\in(\widetilde{u}_k(x'), \widetilde{u}_k(x')+\delta_k)\}\Big)\\=\,&
B_{1/2}(v_k)\cap \{ x_n\ge \widetilde{u}_k(x')+\delta_k\}\\
\supseteq\,&B_{\delta_k}(\zeta_k).
\end{split}\end{equation}
By plugging this information into~\eqref{24hmYo2q453ty435wrtfg2qwqadf43eg33unglaw}, we thereby conclude that
\begin{equation}\label{24hmYo2q453ty435wrtfg2qwqadf43eg33unglawTRIS}\begin{split}
\Upsilon_k\ge\,&\frac12\,r_k^{n+s_1}
\int_{B_{\delta_k}(\zeta_k)
}K_1\big(r_k(v_k- z)\big)\,dz-C
\\ \ge\,&
\frac12\,
\int_{B_{\delta_k}(\zeta_k)
}\frac{dz}{|v_k- z|^{n+s_1}}-C\\=\,&
\frac{c}{\delta_k^{s_1}}-C,\end{split}\end{equation}
for some~$c>0$.
{F}rom this, the desired result in~i) plainly follows.
\end{proof}

With this, we are in the position of providing the proof of Theorem~\ref{thmYounglaw},
where we suppose that~$a_1$ and~$a_2$ are anisotropic functions and then, as a special case, we exhibit the proof of~Corollary \ref{corollario1} where we take~$a_1\equiv \mathrm{const}$.

\begin{proof}[Proof of Theorem \ref{thmYounglaw}]
We fix a point~$x_0\in \partial\Omega\cap \mathrm{Reg}_E$ and a sequence
of points~$x_k\in \Omega\cap \mathrm{Reg}_E$ such that $x_k\rightarrow x_0$ as~$k\to+\infty$. We also set~$r_k:=|x_k-x_0|$
and we observe that~$r_k\to0$ as~$k\to+\infty$.

{F}rom~\eqref{E-Ls} evaluated at~$x_k$, we get 
\begin{equation*}
\mathbf{H}^{K_1}_{\partial E} (x_k) -\int_{\Omega^c}K_1(x_k-y)\,dy+\sigma\int_{\Omega^c}K_2(x_k-y)\,dy+g(x_k)=c,
\end{equation*}
where $c$ does not depend on $k$. Multiplying both sides by $r_k^{s_1}$, we thereby obtain that
\begin{equation*}
r_k^{s_1}\,\mathbf{H}^{K_1}_{\partial E} (x_k)-r_k^{s_1}\int_{\Omega^c}K_1(x_k-y)\,dy+\sigma\, r_k^{s_1-s_2}
\,r_k^{s_2}\int_{\Omega^c}K_2(x_k-y)\,dy+r_k^{s_1}\,g(x_k)
=c\,r_k^{s_1}.
\end{equation*}
Notice that, since $g$ is locally bounded, we have that~$r_k^{s_1}g(x_k)\to 0$ as $k\to+\infty$. 
As a consequence,
\begin{equation}\label{importnbtf0000}
\lim_{k\to+\infty} r_k^{s_1}
\left[\mathbf{H}^{K_1}_{\partial E} (x_k)-\int_{\Omega^c}K_1(x_k-y)\,dy\right]+\sigma
r_k^{s_1-s_2}\,r_k^{s_2}\int_{\Omega^c}K_2(x_k-y)\,dy=0.
\end{equation}

Now, we prove the statement in 1) of Theorem~\ref{thmYounglaw}. For this,
we suppose that~$s_1<s_2$ and~$\sigma<0$. In this case,
$$ \sigma \,r_k^{s_1-s_2}\,r_k^{s_2}\int_{\Omega^c}K_2(x_k-y)\,dy\le 0,$$
and therefore by ii) in Lemma~\ref{angolettilemma} and~\eqref{importnbtf0000}
we deduce that~$\vartheta\neq\pi$. Hence, to prove 1)
it remains to check that
\begin{equation}\label{djiwehtuithbvsdjkghierout85yu5897568}
\vartheta\not\in(0,\pi).\end{equation}
To this end, we suppose by contradiction that~$\vartheta\in(0,\pi)$.
Then, by the Lebesgue's Dominated Convergence Theorem,
\begin{equation}\label{jiwhrwutui4ty34ui}
\begin{split}\lim_{k\to+\infty}
r_k^{s_2}\int_{\Omega^c} K_2(x_k-y)\,dy=\,&\lim_{k\to+\infty}\int_{(\Omega^{x_0,r_k})^c} r_k^{n+s_2} K_2(r_k(v_k-y))\,dy\\=\,&\int_{H^c}K_2^\ast(v-y)\,dy 
\end{split}
\end{equation}
and this limit is finite.
Consequently,
$$ \lim_{k\to+\infty} \sigma \,r_k^{s_1-s_2}\,r_k^{s_2}\int_{\Omega^c}K_2(x_k-y)\,dy=-\infty.$$
This and~iii) in Lemma~\ref{angolettilemma} contradict~\eqref{importnbtf0000},
and thus~\eqref{djiwehtuithbvsdjkghierout85yu5897568} is proved. 

Accordingly, if~$s_1<s_2$ and~$\sigma<0$, then necessarily~$\vartheta=0$, which establishes~1).

We now prove the statement in 2). Namely we consider the case
in which~$s_1<s_2$ and~$\sigma>0$, and thus
$$ \sigma \,r_k^{s_1-s_2}\,r_k^{s_2}\int_{\Omega^c}K_2(x_k-y)\,dy\ge 0.$$
{F}rom this, i) in Lemma~\ref{angolettilemma} and~\eqref{importnbtf0000}
we infer that~$\vartheta\neq0$. Hence, to establish 2)
we show that
\begin{equation}\label{djiwehtuithbvsdjkghierout85yu5897568BIS}
\vartheta\not\in(0,\pi).\end{equation}
We argue as before and we suppose by contradiction that~$\vartheta\in(0,\pi)$.
Then, exploiting~\eqref{jiwhrwutui4ty34ui} we see that
$$ \lim_{k\to+\infty} \sigma \,r_k^{s_1-s_2}\,r_k^{s_2}\int_{\Omega^c}K_2(x_k-y)\,dy=+\infty.$$
This and~iii) in Lemma~\ref{angolettilemma} contradict~\eqref{importnbtf0000},
and thus~\eqref{djiwehtuithbvsdjkghierout85yu5897568BIS} is proved. 

As a consequence, if~$s_1<s_2$ and~$\sigma>0$, then~$\vartheta=\pi$, hence we have established~2)
as well. Hence, we now focus on the statement in~3).

For this, we first suppose that~$s_1<s_2$ and~$\sigma=0$. Then, \eqref{importnbtf0000} becomes
\begin{equation}\label{djweiorucbiotuebtovb5u864y58}
\lim_{k\to+\infty} r_k^{s_1}
\left[\mathbf{H}^{K_1}_{\partial E} (x_k)-\int_{\Omega^c}K_1(x_k-y)\,dy\right]=0.
\end{equation}
This and Lemma~\ref{angolettilemma} give that~$\vartheta\in(0,\pi)$ in this case.

In the case in which~$s_1>s_2$, if~$\vartheta\in\{0,\pi\}$ then we would use
Lemma~\ref{lemmaangoletti2} to find a contradiction with~\eqref{importnbtf0000}, hence
we conclude that necessarily~$\vartheta\in(0,\pi)$ in this case as well.

Now, in order to prove~\eqref{Y:LS}, we take~$v\in H\cap \partial V$, then by~\eqref{NOHA}
we have that, for every~$k$, there exists~$v_k\in \Omega^{x_0,r_k}\cap\partial E^{x_0,r_k}$
such that~$v_k\to v$ as~$k\to+\infty$, where~$r_k$ is an infinitesimal sequence as~$k\to+\infty$. As a consequence, for every~$k$,
there exists~$x_k\in \mathrm{Reg}_E\cap\Omega$ 
such that~$v_k=\frac{x_k-x_0}{r_k}$ and~$x_k\to x_0$ as~$k\to+\infty$.
Then, we are in the position to apply iii) in Lemma~\ref{angolettilemma} and conclude that
\begin{equation}\label{swiru4t94kijhygfmjnhbgvf89765trewt54e5q4gy5hyuj}
\lim_{k\to+\infty}r_k^{s_1}
\left[\mathbf{H}^{K_1}_{\partial E} (x_k)- \int_{\Omega^c}K_1(x_k-y)\,dy\right]=
\mathbf{H}^{K_1^\ast}_{\partial (H\cap V)} (v)-\int_{H^c}K_1^\ast (v-y)\,dy .\end{equation}

Also, if~$s_1>s_2$,
we recall that the limit in~\eqref{jiwhrwutui4ty34ui} is finite (since~$\vartheta\in(0,\pi)$)
and that~$r_k$ is infinitesimal to infer that
\begin{equation*}
\lim_{k\to+\infty}
r_k^{s_1-s_2}\,r_k^{s_2}\int_{\Omega^c}K_2(x_k-y)\,dy=0.
\end{equation*}
This, together with~\eqref{importnbtf0000}, gives that~\eqref{djweiorucbiotuebtovb5u864y58}
holds true in this case as well.

Accordingly, from~\eqref{djweiorucbiotuebtovb5u864y58} and~\eqref{swiru4t94kijhygfmjnhbgvf89765trewt54e5q4gy5hyuj} we deduce that
$$\mathbf{H}^{K_1^\ast}_{\partial (H\cap V)} (v)-\int_{H^c}K_1^\ast (v-y)\,dy=0,$$
which establishes~\eqref{Y:LS}.

Hence, to complete the proof of the statement in~3), it remains
to check that~$\widehat{\vartheta}=\pi-\vartheta$, being~$\widehat{\vartheta}\in (0,2\pi)$ the angle given in~\eqref{cancellation} with~$c=0$.

For this, we exploit the notation in~\eqref{etheta}, the assumption in~\eqref{adsyrytrjgtjhg697867}
and the change of variable~$z=y/|v|$, to see that,
for all~$v\in  H\cap  \partial V$,
the left hand side of~\eqref{Y:LS} can be written as
\begin{equation*}
\begin{split}&
\mathbf{H}^{K_1^\ast}_{\partial (H\cap V)} (v)-\int_{H^c}K_1^\ast (v-y)\,dy
=\int_{\R^n}K_1^\ast(v-y)\big(\chi_{(H\cap V)^c\cap H}(y)
-\chi_{H\cap V}(y)
\big)\,dy
\\&\qquad\qquad=\int_{\R^n}\frac{a_1(\overrightarrow{v-y})}{|v-y|^{n+s_1}}\big(\chi_{(H\cap V)^c\cap H}(y)
-\chi_{H\cap V}(y)
\big)\,dy
\\
&\qquad\qquad =|v|^{-s_1}
\int_{\mathbb{R}^n}\frac{a_1(\overrightarrow{e(\vartheta)-z})\big(\chi_{J_{0,\vartheta}^c\cap H}(z)
-\chi_{J_{0,\vartheta}}(z)\big)}{|e(\vartheta)-z|^{n+s_1}}\,dz\\
&\qquad
\qquad=|v|^{-s_1}
\int_{J_{\vartheta,\pi}}\frac{a_1(\overrightarrow{e(\vartheta)-z})}{|e(\vartheta)-z|^{n+s_1}}\, dz
-|v|^{-s_1}\int_{J_{0,\vartheta}}\frac{a_1(\overrightarrow{e(\vartheta)-z})}{|e(\vartheta)-z|^{n+s_1}}\, dz.
\end{split}
\end{equation*}
Therefore, by~\eqref{Y:LS},
\begin{equation}\label{equivbof54yib59vyn}
\int_{J_{\vartheta,\pi}}\frac{a_1(\overrightarrow{e(\vartheta)-z})}{|e(\vartheta)-z|^{n+s_1}}\, dz-\int_{J_{0,\vartheta}}\frac{a_1(\overrightarrow{e(\vartheta)-z})}{|e(\vartheta)-z|^{n+s_1}}\, dz
=0.
\end{equation}
Consequently, recalling the notation in~\eqref{D} and exploting~\eqref{cancellation} with~$c=0$,
we have that
$$
 \mathcal{D}_\vartheta(\pi-{\vartheta})=
\int_{J_{\vartheta,\pi}}\frac{a_1(\overrightarrow{e(\vartheta)-z})}{|e(\vartheta)-z|^{n+s_1}}\, dz-\int_{J_{0,\vartheta}}\frac{a_1(\overrightarrow{e(\vartheta)-z})}{|e(\vartheta)-z|^{n+s_1}}\, dz
=0=\mathcal{D}_\vartheta(\widehat{\vartheta}).$$
By the uniqueness claim in Proposition~\ref{cancellazione}, we conclude that~$\pi-{\vartheta}=\widehat\vartheta$, as desired.

This completes the proof of~3), and in turn of Theorem~\ref{thmYounglaw}.
\end{proof}

As a consequence of Theorem~\ref{thmYounglaw} we now obtain the particular case
in which~$a_1\equiv\mathrm{const}$ dealt with in
Corollary~\ref{corollario1}.

\begin{proof}[Proof of Corollary \ref{corollario1}] 
We point out that 1) and~2) in Corollary \ref{corollario1} follow from~1) and~2)
in Theorem~\ref{thmYounglaw}, respectively.

To prove 3) of Corollary \ref{corollario1}, we first notice that~$\vartheta\in(0,\pi)$ in these cases.
Also, if~$a_1\equiv\mathrm{const}$, then the cancellation property
in~\eqref{cancellation} boils down to~$\mathcal{D}_\vartheta(\vartheta)=0$,
and therefore, by the uniqueness claim in Proposition~\ref{cancellazione}
we obtain that~$\widehat{\vartheta}=\vartheta$.

Furthermore, we recall that~\eqref{Y:LS} holds true in this case, thanks to~3) of
Theorem~\ref{thmYounglaw}, and therefore, using the equivalent formulation of~\eqref{Y:LS}
given in~\eqref{equivbof54yib59vyn} (with~$a_1\equiv\mathrm{const}$ in this case), we find that
\begin{equation*}
 \mathcal{D}_\vartheta(\pi-{\vartheta})=
\int_{J_{\vartheta,\pi}}\frac{a_1}{|e(\vartheta)-z|^{n+s_1}}\, dz
-\int_{J_{0,\vartheta}}\frac{a_1}{|e(\vartheta)-z|^{n+s_1}}\, dz
=0=\mathcal{D}_\vartheta({\vartheta}).\end{equation*}
Hence, using again the uniqueness claim in Proposition~\ref{cancellazione}
we conclude that~$\pi-\vartheta=\vartheta$, which gives that~$\vartheta=\frac{\pi}2$, as desired.
\end{proof}  

We now deal with the case~$s_1=s_2$, as given by Theorem~\ref{thmYounglawa1a2nonconst}. For this,
we need a variation of Lemma~\ref{lemmaangoletti2} that takes into account the situation
in which~$s_1=s_2$.

\begin{lem}\label{lemmaangoletti3}
Let~$s\in(0,1)$ and $K_1$, $K_2\in\mathbf{K}^2(n,s,\lambda,\varrho)$.
Assume that there exists~$\varepsilon_0\in(0,1)$ such that
\begin{equation}\label{condimprotante}
|\sigma|\, K_2(\zeta) \le (1-\varepsilon_0) \,K_1(\zeta) \qquad{\mbox{for all }}\zeta\in B_{\varepsilon_0}\setminus\{0\}.
\end{equation}

Let $\Omega$ be an open bounded set with $C^1$-boundary and $E$ be a volume-constrained critical set of $\mathcal{C}$. 

Let $x_0\in \mathrm{Reg}_E\cap \partial\Omega$,
$x_k\in\mathrm{Reg}_E\cap \Omega$
such that~$x_k\to x_0$ as~$k\to+\infty$ and~$r_k>0$ such that~$r_k\to0$ as~$k\to+\infty$.

Suppose that $H$ and $V$ are open
half-spaces such that
\begin{equation*}
\Omega^{x_0,r_k}\rightarrow H\qquad {\mbox{and}}\qquad
E^{x_0,r_k}\rightarrow H\cap V\quad {\mbox{in }} L^1_\mathrm{loc}(\R^n)
\;{\mbox{as }}k\to +\infty.
\end{equation*}
Let~$\vartheta\in [0,\pi]$ be the angle between
the half-spaces $H$ and $V$, that is $H\cap V=J_{0,\vartheta}$ in the notation of~\eqref{Jtheta1theta2}.

Then,  
\begin{itemize}
\item[i)] if~$\vartheta=0$ then
$$ \lim_{k\to+\infty}r_k^{s}
\left[\mathbf{H}^{K_1}_{\partial E} (x_k)-\int_{\Omega^c}K_1(x_k-y)\,dy
+\sigma\int_{\Omega^c}K_2(x_k-y)\,dy\right]
=+\infty;
$$
\item[ii)] if~$\vartheta=\pi$ then
$$ \lim_{k\to+\infty}r_k^{s}
\left[\mathbf{H}^{K_1}_{\partial E} (x_k)-\int_{\Omega^c}K_1(x_k-y)\,dy+\sigma\int_{\Omega^c}K_2(x_k-y)\,dy\right]=-\infty.
$$
\end{itemize}
\end{lem}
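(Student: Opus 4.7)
The plan is to adapt the argument of Lemma~\ref{lemmaangoletti2} almost line by line, with the homogeneity gap $s_1>s_2$ (which there supplied the vanishing prefactor $r_k^{s_1-s_2}$ used to dominate the $K_2$ term by the $K_1$ term) now replaced by the pointwise smallness condition~\eqref{condimprotante}. I treat case i) in detail and deduce case ii) by the mirror argument.

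Setting $v_k:=(x_k-x_0)/r_k$ and
$$
\Upsilon_k := r_k^{s}\left[\mathbf{H}^{K_1}_{\partial E}(x_k)-\int_{\Omega^c}K_1(x_k-y)\,dy+\sigma\int_{\Omega^c}K_2(x_k-y)\,dy\right],
$$
I would rewrite $\Upsilon_k$ in rescaled coordinates $z=(y-x_0)/r_k$, absorb the contributions of $K_1$ and $K_2$ outside $B_{1/2}(v_k)$ into an additive constant $-C$ via~\eqref{stimakernel}, describe $\partial\Omega$ and $\partial E$ locally by the graphs $\psi_k,u_k$, replace $u_k$ by its linearization $\widetilde{u}_k$ at $v'_k$ at a cost $O(r_k^\alpha)$ with $\alpha>s$, and keep $|\widetilde{u}_k-\psi_k|\le\delta_k\to0$ on $B'_{1/2}(v'_k)$ as in~\eqref{CON2STDELTAk}. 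Introducing the reflection $Y(z):=2v_k-z$ and the threshold
$$
\mathcal{Z}_k(x):=\max\bigl\{r_k^{n+s}K_1(x),\ |\sigma|\,r_k^{n+s}K_2(x)\bigr\},
$$
the exact analogue of~\eqref{FRE01044} reads
$$
\Upsilon_k\ge r_k^{n+s}\!\!\int_{\mathcal{P}_k\setminus(Y(\mathcal{Q}_k)\cup Y(\mathcal{R}_k))}\!\!K_1(r_k(v_k-z))\,dz+\int_{Y(\mathcal{Q}_k)}\!\!\alpha_k+\int_{Y(\mathcal{R}_k)}\!\!\beta_k-C,
$$
where $\mathcal{P}_k,\mathcal{Q}_k,\mathcal{R}_k$ are the sets introduced in the proof of Lemma~\ref{lemmaangoletti2} and $\alpha_k:=r_k^{n+s}K_1-\mathcal{Z}_k$, $\beta_k:=r_k^{n+s}K_1-|\sigma|\,r_k^{n+s}K_2$ are evaluated at $r_k(v_k-z)$.

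The decisive step is where~\eqref{condimprotante} takes over from the homogeneity argument: for $z\in B_{1/2}(v_k)$ and $k$ large enough that $r_k/2<\varepsilon_0$, one has $r_k(v_k-z)\in B_{\varepsilon_0}$, hence $|\sigma|K_2\le(1-\varepsilon_0)K_1<K_1$ at that point. This forces $\mathcal{Z}_k=r_k^{n+s}K_1$ (so $\alpha_k\equiv 0$ on $Y(\mathcal{Q}_k)$) and $\beta_k\ge\varepsilon_0\,r_k^{n+s}K_1$ on $Y(\mathcal{R}_k)$. Trading the full integral on $\mathcal{P}_k\setminus(Y(\mathcal{Q}_k)\cup Y(\mathcal{R}_k))$ for $\varepsilon_0$ times the same integrand, one arrives at
$$
\Upsilon_k\ge\varepsilon_0\,r_k^{n+s}\int_{\mathcal{P}_k\setminus Y(\mathcal{Q}_k)}K_1(r_k(v_k-z))\,dz-C,
$$
and the finish mirrors~\eqref{24hmYo2q453ty435wrtfg2qwqadf43eg33unglawTRIS}: the inclusion $B_{\delta_k}(\zeta_k)\subseteq\mathcal{P}_k\setminus Y(\mathcal{Q}_k)$ from~\eqref{KSJLDboqkwfmCCijdf1b}, the lower bound in~\eqref{stimakernel}, and $|v_k-z|\le 4\delta_k$ on $B_{\delta_k}(\zeta_k)$ together give $\Upsilon_k\ge c/\delta_k^{s}-C\to+\infty$, proving i).

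The only genuine obstacle is a piece of bookkeeping: the localization radius used for $B_{1/2}(v_k)$ must be compatible with the smallness threshold $\varepsilon_0$ in~\eqref{condimprotante}, i.e.\ $|r_k(v_k-z)|<\varepsilon_0$ must hold uniformly on the working ball for all sufficiently large $k$. In the standard application one has $r_k=|x_k-x_0|$, so $|v_k|=1$ and any fixed small radius works; the only effect is a slight change of the constant $C$ throughout, while the geometric inclusions and cancellations imported from Lemma~\ref{lemmaangoletti2} remain unchanged. Case ii) is proved by the symmetric argument, reflecting across the graph of $\widetilde{u}_k$ and flipping signs, which yields $\Upsilon_k\le-c/\delta_k^{s}+C\to-\infty$.
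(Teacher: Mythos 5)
Your proof is correct and follows essentially the same route as the paper: both reduce to formula~\eqref{FRE01044} of Lemma~\ref{lemmaangoletti2} with $s_1=s_2=s$, use~\eqref{condimprotante} on $B_{1/2}(v_k)$ (valid for $k$ large since $r_k\to 0$) to conclude $\alpha_k\equiv 0$ and $\beta_k\ge\varepsilon_0\,r_k^{n+s}K_1$, obtain $\Upsilon_k\ge \varepsilon_0\,r_k^{n+s}\int_{\mathcal{P}_k\setminus Y(\mathcal{Q}_k)}K_1-C$, and finish via $B_{\delta_k}(\zeta_k)\subseteq\mathcal{P}_k\setminus Y(\mathcal{Q}_k)$ and the lower bound in~\eqref{stimakernel}. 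Your closing ``bookkeeping'' remark about compatibility of $B_{1/2}(v_k)$ with the threshold $\varepsilon_0$ is a non-issue since $r_k\to 0$ automatically forces $r_k/2<\varepsilon_0$ for large $k$, but it does not affect the correctness of the argument.
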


\begin{proof}
We establish~i), being the proof of~ii) analogous. 
For this, we use the notation introduced in the proof of Lemma~\ref{lemmaangoletti2}, 
and specifically we recall formula~\eqref{FRE01044}, to be used here with~$s_1=s_2=s$.
In this case, we use~\eqref{condimprotante} to see that, if~$k$ is large enough, for all~$z\in B_{1/2}(v_k)$
we have that
\begin{equation}\label{sgorbrador22}
|\sigma| \,K_2\big(r_k(v_k-z)\big)\le (1-\varepsilon_0)\,K_1\big(r_k(v_k-z)\big).\end{equation}
This and~\eqref{OJKHSN9q0woirfhwoiehgf49uiefgbv-2-r0ije} give that
\begin{eqnarray*}
{\mathcal{Z}}_k\big(r_k(v_k-z)\big)&=&r_k^{n+s}\max\Big\{
K_1\big(r_k(v_k-z)\big),\;\,
|\sigma| \,K_2\big(r_k(v_k-z)\big)
\Big\}\\&=&
r_k^{n+s}K_1\big(r_k(v_k-z)\big),
\end{eqnarray*}
which entails that~$\alpha_k(z)=0$.

Also, using again~\eqref{sgorbrador22}, it follows that
\begin{eqnarray*}
\beta_k(z)=r_k^{n+s}\Big(K_1\big(r_k(v_k- z)\big)-
|\sigma|\,K_2\big(r_k(v_k-z)\big)\Big)\ge \varepsilon_0\, r_k^{n+s}\,K_1\big(r_k(v_k- z)\big).
\end{eqnarray*}
In light of these observations, \eqref{FRE01044} in this framework reduces to
$$ \Upsilon_k\ge \varepsilon_0
\, r_k^{n+s}\int_{{\mathcal{P}}_k\setminus Y({\mathcal{Q}}_k)}
K_1\big(r_k(v_k- z)\big)\,dz-C.
$$
We have thus recovered the last inequality in~\eqref{24hmYo2q453ty435wrtfg2qwqadf43eg33unglaw},
with~$1/2$ replaced by the constant~$\varepsilon_0$. Then it suffices to proceed as in~\eqref{24hmYo2q453ty435wrtfg2qwqadf43eg33unglawBIS}
and~\eqref{24hmYo2q453ty435wrtfg2qwqadf43eg33unglawTRIS} to complete the proof.
\end{proof}

With this additional result, we are now in the position of giving the proof 
of Theorem~\ref{thmYounglawa1a2nonconst}.

\begin{proof}[Proof of Theorem \ref{thmYounglawa1a2nonconst}] 
We fix a point~$x_0\in \partial\Omega\cap \mathrm{Reg}_E$ and a sequence
of points~$x_k\in \Omega\cap \mathrm{Reg}_E$ such that $x_k\rightarrow x_0$ as~$k\to+\infty$. We also set~$r_k:=|x_k-x_0|$
and we observe that~$r_k\to0$ as~$k\to+\infty$.

{F}rom~\eqref{E-Ls} evaluated at~$x_k$, we get 
\begin{equation*}
\mathbf{H}^{K_1}_{\partial E} (x_k) -\int_{\Omega^c}K_1(x_k-y)\,dy+\sigma\int_{\Omega^c}K_2(x_k-y)\,dy+g(x_k)=c,
\end{equation*}
where $c$ does not depend on $k$. Thus, multiplying both sides by $r_k^{s}$, we find that
\begin{equation*}
r_k^{s}\,\mathbf{H}^{K_1}_{\partial E} (x_k)-r_k^{s}\int_{\Omega^c}K_1(x_k-y)\,dy+\sigma\, r_k^{s_1}
\int_{\Omega^c}K_2(x_k-y)\,dy+r_k^{s}\,g(x_k)
=c\,r_k^{s}.
\end{equation*}
Since $g$ is locally bounded, we have that~$r_k^{s}g(x_k)\to 0$ as $k\to+\infty$, and therefore
\begin{equation}\label{spwt5u0789786543we676iyut}
\lim_{k\to+\infty} r_k^{s}
\left[\mathbf{H}^{K_1}_{\partial E} (x_k)-\int_{\Omega^c}K_1(x_k-y)\,dy+\sigma
\int_{\Omega^c}K_2(x_k-y)\,dy\right]=0.
\end{equation}
In light of Lemma~\ref{lemmaangoletti3} (which can be exploited here thanks
to assumption~\eqref{magvalimpo}), this gives that the angle~$\vartheta$ between~$H$ and~$V$
lies in~$(0,\pi)$.

Thus, in order to prove~\eqref{Ylaw}, we can take~$v\in H\cap \partial V$ and
we see that, for every~$k$, there exists~$v_k\in \Omega^{x_0,r_k}\cap\partial E^{x_0,r_k}$
such that~$v_k\to v$ as~$k\to+\infty$, where~$r_k$ is an infinitesimal sequence as~$k\to+\infty$. As a consequence, for every~$k$,
there exists~$x_k\in \mathrm{Reg}_E\cap\Omega$ 
such that~$v_k=\frac{x_k-x_0}{r_k}$ and~$x_k\to x_0$ as~$k\to+\infty$.
Then, we are in the position to apply iii) in Lemma~\ref{angolettilemma} and conclude that
\begin{equation*}
\lim_{k\to+\infty}r_k^{s_1}
\left[\mathbf{H}^{K_1}_{\partial E} (x_k)- \int_{\Omega^c}K_1(x_k-y)\,dy\right]=
\mathbf{H}^{K_1^\ast}_{\partial (H\cap V)} (v)-\int_{H^c}K_1^\ast (v-y)\,dy .\end{equation*}

Also, by Lebesgue's Dominated Convergence Theorem,
\begin{equation*}
\begin{split}\lim_{k\to+\infty}
r_k^{s}\int_{\Omega^c} K_2(x_k-y)\,dy=\,&\lim_{k\to+\infty}\int_{(\Omega^{x_0,r_k})^c} r_k^{n+s}
K_2(r_k(v_k-y))\,dy\\=\,&\int_{H^c}K_2^\ast(v-y)\,dy 
\end{split}
\end{equation*}
and this limit is finite.

These considerations and~\eqref{spwt5u0789786543we676iyut} give the
desired result in~\eqref{Ylaw}.
\end{proof}

We are now in the position of establishing
Proposition~\ref{chilosa22}.

\begin{proof}[Proof of Proposition~\ref{chilosa22}]
We exploit the notation in~\eqref{etheta}, the assumption in~\eqref{adsyrytrjgtjhg697867}
and the change of variable~$z=y/|v|$, to see that~\eqref{vfjoovvvv9990000043554647}
can be written as
\begin{equation*}
\begin{split}0=\,&
\mathbf{H}^{K_1^\ast}_{\partial (H\cap V)} (v)-\int_{H^c}K_1^\ast (v-y)\,dy+\sigma
\int_{H^c}K_2^\ast (v-y)\,dy 
\\
=\,&\int_{\R^n}K_1^\ast(v-y)\big(\chi_{(H\cap V)^c\cap H}(y)
-\chi_{H\cap V}(y)
\big)\,dy+\sigma
\int_{H^c}K_2^\ast (v-y)\,dy 
\\=\,&\int_{\R^n}\frac{a_1(\overrightarrow{v-y})}{|v-y|^{n+s_1}}\big(\chi_{(H\cap V)^c\cap H}(y)
-\chi_{H\cap V}(y)
\big)\,dy+\sigma
\int_{H^c}\frac{a_2(\overrightarrow{v-y})}{|v-y|^{n+s_2}}\,dy \\
=\,&|v|^{-s_1}
\int_{\mathbb{R}^n}\frac{a_1(\overrightarrow{e(\vartheta)-z})\big(\chi_{J_{0,\vartheta}^c\cap H}(z)
-\chi_{J_{0,\vartheta}}(z)\big)}{|e(\vartheta)-z|^{n+s_1}}\,dz
+
\sigma |v|^{-s_2}\int_{H^c}\frac{a_2(\overrightarrow{e(\vartheta)-z})}{|e(\vartheta)-z|^{n+s_2}}\,dz
\\
=\,&|v|^{-s_1}
\int_{J_{\vartheta,\pi}}\frac{a_1(\overrightarrow{e(\vartheta)-z})}{|e(\vartheta)-z|^{n+s_1}}\, dz
-|v|^{-s_1}\int_{J_{0,\vartheta}}\frac{a_1(\overrightarrow{e(\vartheta)-z})}{|e(\vartheta)-z|^{n+s_1}}\, dz
\\&\qquad
+\sigma |v|^{-s_2}\int_{H^c}\frac{a_2(\overrightarrow{e(\vartheta)-z})}{|e(\vartheta)-z|^{n+s_2}}\,dz.
\end{split}
\end{equation*}
Hence, recalling the assumption in~\eqref{sjwicru348bt4v8b4576848poiuytrewqasdfghjkmnbvs},
this gives the desired result in~\eqref{sigmagenerale-0}.\end{proof}

\section{Proofs of Theorems~\ref{ESEMPIOPTO} and~\ref{ESEMPIOPTO:CO}}\label{ELLANUEOD32}

We now deal with the possibly degenerate cases in which the nonlocal droplets
either detach from the container or adhere completely to its surfaces. These cases depend on the strong
attraction or repulsion of the second kernel and are described in the examples provided
in Theorems~\ref{ESEMPIOPTO} and~\ref{ESEMPIOPTO:CO}, which we are now going to prove.
For this, we need some auxiliary integral estimates to detect the interaction between ``thin sets''.
This is formalized in Lemmata~\ref{LE:AUSJM-1aux1} and~\ref{LE:AUSJM-1aux2} here below:

\begin{lem}\label{LE:AUSJM-1aux1}Let~$r$, $t>0$, $s\in(0,1)$ and
$$ D:=\big\{x=(x',x_n)\in\R^n\,:\,|x'|<r {\mbox{ and }}x_n\in(0,t)\big\}.$$
Then,
$$ \iint_{D\times\{y_n<0\}}\frac{dx\,dy}{|x-y|^{n+s}}=c_\star\, r^{n-1}\,t^{1-s},$$
for a suitable~$c_\star>0$, depending only on~$n$ and~$s$.
\end{lem}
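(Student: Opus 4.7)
The plan is to iterate the integral, doing the $y$-integration first and then the $x$-integration, and exploit the fact that the integrand depends only on $x-y$ together with translation/scaling invariance.

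\textbf{Step 1: inner integral in $y$.} Fix $x=(x',x_n)\in D$, so in particular $x_n\in(0,t)$. After the change of variables $z=y-x$, the condition $y_n<0$ becomes $z_n<-x_n$, and
\[
\int_{\{y_n<0\}}\frac{dy}{|x-y|^{n+s}}=\int_{\{z_n<-x_n\}}\frac{dz}{|z|^{n+s}}.
\]
The right-hand side depends only on $x_n$. Writing $\tau=-z_n>x_n>0$ and then rescaling $z'=\tau w$ in $\R^{n-1}$ gives
\[
\int_{\{z_n<-x_n\}}\frac{dz}{|z|^{n+s}}
=\int_{x_n}^{\infty}\!\!\int_{\R^{n-1}}\frac{dz'\,d\tau}{(|z'|^2+\tau^2)^{\frac{n+s}{2}}}
=A_{n,s}\int_{x_n}^{\infty}\tau^{-1-s}\,d\tau
=\frac{A_{n,s}}{s\,x_n^{s}},
\]
where $A_{n,s}:=\int_{\R^{n-1}}(1+|w|^2)^{-(n+s)/2}\,dw$ is a finite positive constant depending only on $n$ and $s$.

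\textbf{Step 2: outer integral in $x$.} By Fubini/Tonelli (all integrands are nonnegative), the double integral equals
\[
\int_{\{|x'|<r\}}\!\!\int_{0}^{t}\frac{A_{n,s}}{s\,x_n^{s}}\,dx_n\,dx'
=\omega_{n-1}\,r^{n-1}\cdot\frac{A_{n,s}}{s}\cdot\frac{t^{1-s}}{1-s},
\]
where $\omega_{n-1}$ denotes the volume of the unit ball in $\R^{n-1}$. Setting
\[
c_\star:=\frac{\omega_{n-1}\,A_{n,s}}{s\,(1-s)}>0
\]
yields the desired identity.

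There is no real obstacle: the only points requiring a line of justification are the integrability at $\tau=\infty$ (ensured by $s>0$) and at $x_n=0^+$ (ensured by $s<1$, so that $\int_0^t x_n^{-s}\,dx_n$ converges), both of which hold since $s\in(0,1)$. The constant $c_\star$ is explicit in terms of $n$ and $s$, as claimed.
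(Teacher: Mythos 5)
Your proof is correct and follows essentially the same route as the paper's: both reduce the $y$-integral to a one-dimensional $\tau$-integral via the scaling substitution (your $z'=\tau w$ is the paper's $\xi=(y'-x')/(x_n-y_n)$), then integrate out $x_n$ and $x'$. The only difference is that the paper evaluates the constant $A_{n,s}$ explicitly in terms of Gamma functions, which is unnecessary since the lemma only asserts existence of a positive constant.
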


\begin{proof} We recall that the surface area of the $(n-1)$-dimensional unit sphere is equal to~${\frac {2\pi ^{\frac {n}{2}}}{\Gamma \left({\frac {n}{2}}\right)}}$,
where~$\Gamma$ is the Gamma Function. Furthermore,
$$ \int_0^{+\infty}\frac{\ell^{n-2}\, d\ell}{\big( \ell^2+1\big)^{\frac{n+s}2}}=
\frac{ \Gamma\left(\frac{n-1}2\right) \Gamma\left(\frac{1+s}2\right)}{2 \Gamma\left(\frac{n+s}2\right)} .$$
Hence,
we use the substitution~$\xi:=\frac{y'-x'}{x_n-y_n}$ to see that
\begin{eqnarray*}
&&\iint_{D\times\{y_n<0\}}\frac{dx\,dy}{|x-y|^{n+s}}\\&&\qquad=
\int_0^t\left[\int_{\{|x'|<r\}} \left[\int_{-\infty}^0\left[\int_{\R^{n-1}}
\frac{d\xi}{ (x_n-y_n)^{1+s}\big( |\xi|^2+1\big)^{\frac{n+s}2}}
\right]\,dy_n
\right]
\,dx'\right]\,dx_n\\&&\qquad={\frac{2\pi ^{\frac {n-1}{2}}}{\Gamma \left({\frac {n-1}{2}}\right)}}
\int_0^t\left[\int_{\{|x'|<r\}} \left[\int_{-\infty}^0\left[\int_0^{+\infty}
\frac{\ell^{n-2}\, d\ell}{ (x_n-y_n)^{1+s}\big( \ell^2+1\big)^{\frac{n+s}2}}
\right]\,dy_n
\right]
\,dx'\right]\,dx_n\\
\\&&\qquad={\frac{\pi ^{\frac {n-1}{2}} \Gamma\left(\frac{1+s}2\right)}{\Gamma\left(\frac{n+s}2\right)}}
\int_0^t\left[\int_{\{|x'|<r\}} \left[\int_{-\infty}^0\frac{dy_n}{ (x_n-y_n)^{1+s}}
\right]
\,dx'\right]\,dx_n
\\&&\qquad=
{\frac{2 \pi ^{\frac {2n-1}{2}} \Gamma\left(\frac{1+s}2\right)}{\Gamma \left({\frac {n}{2}}\right)\Gamma\left(\frac{n+s}2\right)}}\,r^{n-1}\,
\int_0^t\left[\int_{-\infty}^0\frac{dy_n}{ (x_n-y_n)^{1+s}}
\right]\,dx_n\\&&\qquad=
{\frac{2 \pi ^{\frac {2n-1}{2}} \Gamma\left(\frac{1+s}2\right)}{s\,\Gamma \left({\frac {n}{2}}\right)\Gamma\left(\frac{n+s}2\right)}}\,r^{n-1}\,
\int_0^t \frac{dx_n}{ x_n^s}\\&&\qquad=
{\frac{2 \pi ^{\frac {2n-1}{2}} \Gamma\left(\frac{1+s}2\right)}{s\,(1-s)\,\Gamma \left({\frac {n}{2}}\right)\Gamma\left(\frac{n+s}2\right)}}\,r^{n-1}\,t^{1-s},
\end{eqnarray*}
as desired.\end{proof}

\begin{lem}\label{LE:AUSJM-1aux2}
Let~$r$, $t>0$, $s\in(0,1)$,
\begin{eqnarray*}&& D:=\big\{x=(x',x_n)\in\R^n\,:\,|x'|<r {\mbox{ and }}x_n\in(0,t)\big\}
\\ {\mbox{and }}&& F:=\big\{x=(x',x_n)\in\R^n\,:\,|x'|>r {\mbox{ and }}x_n\in(0,t)\big\}.\end{eqnarray*}
Then,
$$ \iint_{D\times F}\frac{dx\,dy}{|x-y|^{n+s}}\le Ct\, r^{n-1-s}
,$$
for some~$C>0$ depending only on~$n$ and~$s$.
\end{lem}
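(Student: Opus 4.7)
\emph{Proof plan.} The plan is to integrate out the ``vertical'' variables $x_n$ and $y_n$ first, so as to reduce matters to a purely $(n-1)$-dimensional interaction between the horizontal slices $\{|x'|<r\}$ and $\{|y'|>r\}$. For fixed $x'\ne y'$ and $x_n\in(0,t)$, the inner $y_n$-integral is controlled by extending the integration to all of~$\R$: the substitution $u=(x_n-y_n)/|x'-y'|$ yields
\begin{equation*}
\int_0^t \frac{dy_n}{\big(|x'-y'|^2 + (x_n-y_n)^2\big)^{(n+s)/2}}
\;\le\; \int_{\R}\frac{du}{(|x'-y'|^2+u^2)^{(n+s)/2}}
\;=\; \frac{C_1(n,s)}{|x'-y'|^{(n-1)+s}}.
\end{equation*}
A further integration over $x_n\in(0,t)$ produces a factor of $t$, whence
\begin{equation*}
\iint_{D\times F}\frac{dx\,dy}{|x-y|^{n+s}}
\;\le\; C_1(n,s)\,t\,\mathcal{I}(r),\qquad
\mathcal{I}(r):=\iint_{\{|x'|<r\}\times\{|y'|>r\}}\frac{dx'\,dy'}{|x'-y'|^{(n-1)+s}}.
\end{equation*}

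The dimensional scaling $(x',y')=(rX',rY')$ then gives the homogeneity $\mathcal{I}(r)=r^{(n-1)-s}\,\mathcal{I}(1)$, so the desired estimate $Ct\,r^{n-1-s}$ follows the moment one knows the universal constant
\begin{equation*}
\mathcal{I}(1) = \iint_{\{|X'|<1\}\times\{|Y'|>1\}}\frac{dX'\,dY'}{|X'-Y'|^{(n-1)+s}}
\end{equation*}
is finite. One can recognize $\mathcal{I}(1)$ as (a multiple of) the fractional $s$-perimeter of the unit ball in $\R^{n-1}$, for which finiteness is classical; nevertheless this is the only non-routine point in the argument, and the step I expect to be the main obstacle.

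For an elementary verification that $\mathcal{I}(1)<+\infty$, I would split the $Y'$-integration into the far region $\{|Y'|>2\}$, where $|X'-Y'|\ge|Y'|/2$ and the integrand over $|X'|<1$ decays like $|Y'|^{-(n-1)-s}$ at infinity (integrable thanks to $s>0$), and the proximal annulus $\{1<|Y'|<2\}$. In the annulus, setting $\delta:=|Y'|-1$ and substituting $W:=X'-Y'$, the inner integral is bounded by
\begin{equation*}
\int_{\{\delta<|W|<|Y'|+1\}}\frac{dW}{|W|^{(n-1)+s}}\;\le\; C\,\delta^{-s},
\end{equation*}
and a final integration of $(|Y'|-1)^{-s}$ over the thin annulus $\{1<|Y'|<2\}$ converges thanks to $s<1$. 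These two contributions together give $\mathcal{I}(1)<+\infty$ and complete the proof.
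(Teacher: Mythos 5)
Your argument is correct and follows essentially the same route as the paper: extend the $y_n$-integration to all of $\R$ (the paper does this after first rescaling $x_n,y_n$ by $|x'-y'|$, you do it directly and then substitute, but the effect is identical), integrate out $x_n$ to produce the factor $t$, and then scale $(x',y')\mapsto (rX',rY')$ to extract $r^{n-1-s}$. The one thing you add that the paper leaves implicit is the verification that $\mathcal{I}(1)<+\infty$; that check is correct and is a welcome bit of completeness, though not a different method.
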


\begin{proof} Differently from the proof of Lemma~\ref{LE:AUSJM-1aux1},
here it is convenient to exploit the substitutions~$\alpha:=\frac{x_n}{|x'-y'|}$ and~$\beta:=\frac{y_n}{|x'-y'|}$.
In this way we see that
\begin{eqnarray*}&&
\iint_{D\times F}\frac{dx\,dy}{|x-y|^{n+s}}\\&&\qquad=\int_{\{|x'|<r\}}\left[\int_{\{|y'|>r\}}
\left[\int_0^{t/|x'-y'|} \left[\int_0^{t/|x'-y'|} \frac{d\beta}{|x'-y'|^{n+s-2}(1+(\alpha-\beta)^2)^{\frac{n+s}2}}\right]\,d\alpha\right]\,dy'\right]\,dx'\\&&\qquad\le\int_{\{|x'|<r\}}\left[\int_{\{|y'|>r\}}
\left[\int_0^{t/|x'-y'|} \left[\int_0^{+\infty} \frac{d\gamma}{|x'-y'|^{n+s-2}(1+\gamma^2)^{\frac{n+s}2}}\right]\,d\alpha\right]\,dy'\right]\,dx'\\&&\qquad=C\int_{\{|x'|<r\}}\left[\int_{\{|y'|>r\}}
\left[\int_0^{t/|x'-y'|} \frac{d\alpha}{|x'-y'|^{n+s-2}}\right]\,dy'\right]\,dx'\\&&\qquad=Ct
\int_{\{|x'|<r\}}\left[\int_{\{|y'|>r\}}
\frac{dy'}{|x'-y'|^{n+s-1}}\right]\,dx'\\&&\qquad=Ct\,r^{n-1-s}
\int_{\{|X'|<1\}}\left[\int_{\{|Y'|>1\}}
\frac{dY'}{|X'-Y'|^{n+s-1}}\right]\,dX'\\&&\qquad =Ct\,r^{n-1-s},
\end{eqnarray*}
where, as customary, we took the freedom of renaming~$C$ line after line.
\end{proof}

Now, in the forthcoming Lemma~\ref{OEHDpimnSBlkOGFS-012rikjmg} we present a further technical result that detects suitable cancellations
involving ``thin sets''. This is a pivotal result to account for the nonlocal scenario.
Indeed, in the classical capillarity theory, to look for a competitor for a given set,
one can dig out a (small deformation of a)
cylinder with base radius equal to~$\e$ and height~$\delta\e$
and then add a ball with the same volume. A very convenient fact in this scenario is that
the surface error produced by the cylinder is of order~$\e^{n-1}$, while the one produced by the balls
are of order~$(\delta\e^n)^{\frac{n-1}n}=\delta^{\frac{n-1}n}\e^{n-1}$.
That is, for~$\delta$ suitably small, the surface tension produced by the new ball is negligible with respect
to the surface tension of the cylinder, thus allowing us to construct competitors in a nice and simple way.

Instead, in the nonlocal setting, for a given value of the fractional parameter, the corresponding nonlocal
surface tension produced by cylinders and balls of the same volume are comparable. This makes
the idea of ``adding a ball to compensate the loss of volume caused by removing a cylinder'' not suitable for the nonlocal framework.
Instead, as we will see in the proof
of Theorem~\ref{ESEMPIOPTO},
the volume compensation should occur through the addition of a suitably thin set placed at a regular point of
the droplet. The fact that the corresponding nonlocal surface energy produces a negligible contribution
will rely on the following result:

\begin{lem}\label{OEHDpimnSBlkOGFS-012rikjmg}
Let~$s\in(0,1)$, $0<\e<\delta<1$ and~$\eta\in(0,1)$. Let~$f\in C^{1,\alpha}_0\left(\R^{n-1},\,\left(-\frac\delta2,\frac\delta2\right)\right)$ for some~$\alpha\in(0,1)$
and assume that~$f(0)=0$ and~$\partial_i f(0)=0$ for all~$i\in\{1,\dots,n-1\}$.

Let~$\varphi\in C^\infty(\R^{n-1},[0,+\infty))$ be such that~$\varphi(x')=0$ whenever~$|x'|\ge1$
and~$\int_{\R^{n-1}}\varphi(x')\,dx'=1$.

Let
\begin{eqnarray*}&& \psi(x'):=\frac{\eta}{\e^{n-1}}\varphi\left( \frac{x'}{\e}\right),\\
&& {\mathcal{P}}:=\big\{ x=(x',x_n)\in\R^n \,:\, |x'|<\delta {\mbox{ and }}x_n>f(x')+\psi(x')\big\},\\
&& {\mathcal{Q}}:=\big\{ x=(x',x_n)\in\R^n \,:\,|x'|<\delta {\mbox{ and }} x_n\in\big(f(x'),\,f(x')+\psi(x')\big)\big\}\\
{\mbox{and }}&& {\mathcal{R}}:=\big\{ x=(x',x_n)\in\R^n \,:\, |x'|<\delta {\mbox{ and }} x_n<f(x')\big\}.\end{eqnarray*}
Then, there exist~$\delta_0\in(0,1)$
and~$C>0$, depending only on~$n$, $s$, $\alpha$, $f$ and~$\varphi$, such that if~$\delta<\delta_0$ and~$\eta<\delta_0\e^n$ then
$$ \left| \iint_{ {\mathcal{P}}\times {\mathcal{Q}}}\frac{dx\,dy}{|x-y|^{n+s}}-
\iint_{ {\mathcal{R}}\times {\mathcal{Q}}}\frac{dx\,dy}{|x-y|^{n+s}}
\right|\le C \left( \delta^\alpha+\frac{\eta}{\e^n}\right)
\,\e^{(n-1)s}\,\eta^{1-s}
.$$
\end{lem}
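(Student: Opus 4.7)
The strategy combines two changes of variable. First, the unit-Jacobian diffeomorphism $G(x):=(x',x_n-f(x'))$ flattens the graph of $f$, sending $\mathcal{P}$, $\mathcal{Q}$, $\mathcal{R}$ to the straightened sets $\widetilde{\mathcal{P}}:=\{|X'|<\delta,\,X_n>\psi(X')\}$, $\widetilde{\mathcal{Q}}:=\{|X'|<\delta,\,0<X_n<\psi(X')\}$ and $\widetilde{\mathcal{R}}:=\{|X'|<\delta,\,X_n<0\}$. The integrand in the new variables becomes $(|X-Y|^2+2(X_n-Y_n)\Delta_f+\Delta_f^2)^{-(n+s)/2}$ with $\Delta_f:=f(X')-f(Y')$, and the hypotheses $f(0)=0$, $\nabla f(0)=0$ together with $f\in C^{1,\alpha}$ give $|\Delta_f|\le C\delta^\alpha|X'-Y'|\le C\delta^\alpha|X-Y|$ on $|X'|,|Y'|<\delta$. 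Writing $I_P^{\mathrm{fl}}$, $I_R^{\mathrm{fl}}$ for the ``flat'' integrals in which the pushed-forward kernel is replaced by $|X-Y|^{-(n+s)}$, a first-order expansion of $z\mapsto z^{-(n+s)/2}$ then yields
\[
|I_P-I_P^{\mathrm{fl}}|+|I_R-I_R^{\mathrm{fl}}|\le C\delta^\alpha\bigl(I_P^{\mathrm{fl}}+I_R^{\mathrm{fl}}\bigr).
\]

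The crux is the flat case, which I would treat via the slab-preserving involution $T(X):=(X',\psi(X')-X_n)$. One checks at once that $T\circ T=\mathrm{Id}$, that $T$ has unit Jacobian, and that $T(\widetilde{\mathcal{P}})=\widetilde{\mathcal{R}}$ while $T(\widetilde{\mathcal{Q}})=\widetilde{\mathcal{Q}}$ as sets. Applying the change of variables $(X,Y)\mapsto(T(X),T(Y))$ to $I_P^{\mathrm{fl}}$ rewrites it as an integral over $\widetilde{\mathcal{R}}\times\widetilde{\mathcal{Q}}$ with the modified kernel $|T(X)-T(Y)|^{-(n+s)}$, and therefore
\[
I_P^{\mathrm{fl}}-I_R^{\mathrm{fl}}=\iint_{\widetilde{\mathcal{R}}\times\widetilde{\mathcal{Q}}}\left[\frac{1}{|T(X)-T(Y)|^{n+s}}-\frac{1}{|X-Y|^{n+s}}\right]dX\,dY.
\]
A direct computation gives $|T(X)-T(Y)|^2-|X-Y|^2=\Delta_\psi^2-2(X_n-Y_n)\Delta_\psi$ with $\Delta_\psi:=\psi(X')-\psi(Y')$. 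Since $\psi(X')=(\eta/\varepsilon^{n-1})\varphi(X'/\varepsilon)$, one reads off $L:=\|\nabla\psi\|_\infty\le C\eta/\varepsilon^n$, and hence $|\Delta_\psi|\le L|X-Y|$. Choosing $\delta_0$ small enough that $L\le\delta_0$ is small, $T$ is bi-Lipschitz on $\mathbb{R}^n$ with constants $1\pm CL$, yielding the pointwise bound
\[
\left|\frac{1}{|T(X)-T(Y)|^{n+s}}-\frac{1}{|X-Y|^{n+s}}\right|\le \frac{CL}{|X-Y|^{n+s}},
\]
so that $|I_P^{\mathrm{fl}}-I_R^{\mathrm{fl}}|\le CL\cdot I_R^{\mathrm{fl}}$.

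To close the estimates it remains to bound $I_R^{\mathrm{fl}}\le C\varepsilon^{(n-1)s}\eta^{1-s}$: since $\widetilde{\mathcal{Q}}$ lies in the slab $\{|Y'|<\varepsilon,\,0<Y_n<\eta/\varepsilon^{n-1}\}$, swapping the roles of $X$ and $Y$ and applying Lemma~\ref{LE:AUSJM-1aux1} with $r=\varepsilon$ and $t=\eta/\varepsilon^{n-1}$ gives exactly this bound. The analogous bound for $I_P^{\mathrm{fl}}$ is a byproduct of the involution $T$ and the Lipschitz estimate above. Combining everything,
\[
|I_P-I_R|\le |I_P^{\mathrm{fl}}-I_R^{\mathrm{fl}}|+|I_P-I_P^{\mathrm{fl}}|+|I_R-I_R^{\mathrm{fl}}|\le C\bigl(\delta^\alpha+\eta/\varepsilon^n\bigr)\,\varepsilon^{(n-1)s}\eta^{1-s},
\]
as claimed. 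The main conceptual hurdle is locating the correct involution: the naive horizontal reflection $(X',X_n)\mapsto(X',-X_n)$ matches $\widetilde{\mathcal{P}}$ only to a proper subset of $\widetilde{\mathcal{R}}$, leaving a residual thin-slab interaction that, on its own, fails to produce the required small factor $\eta/\varepsilon^n$. The shifted reflection $T$ is tailored precisely so that $\widetilde{\mathcal{Q}}$ is preserved while $\widetilde{\mathcal{P}}\leftrightarrow\widetilde{\mathcal{R}}$, converting what looks like a delicate integral cancellation into a Lipschitz stability estimate for the kernel $|\cdot|^{-(n+s)}$ of magnitude $\|\nabla\psi\|_\infty\sim\eta/\varepsilon^n$.
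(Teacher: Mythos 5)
Your proof is correct and follows the same underlying idea as the paper's: the crux in both is a reflection-type involution that interchanges $\mathcal{P}$ and $\mathcal{R}$ while fixing $\mathcal{Q}$, whose kernel distortion is controlled by a Lipschitz estimate of size $\delta^\alpha+\eta/\e^n$, followed by the application of Lemma~\ref{LE:AUSJM-1aux1} to the resulting thin-slab interaction. The only genuine difference is organizational: you perform the flattening $(x',x_n)\mapsto(x',x_n-f(x'))$ first, and then apply the midline reflection $T(X)=(X',\psi(X')-X_n)$ in the flattened picture, splitting the error into a $\delta^\alpha$ piece (from flattening) and an $\eta/\e^n$ piece (from the reflection). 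The paper instead uses a single map $T(x)=(-x',\,2f(x')+\psi(x')-x_n)$ that reflects across the midline of $\mathcal{Q}$ in the original coordinates, records that its Jacobian is within $C(\delta^\alpha+\eta/\e^n)$ of minus the identity (both error sources appear at once), and only afterwards flattens in order to invoke Lemma~\ref{LE:AUSJM-1aux1}. The two are essentially equivalent; your two-step version makes the separate sources of the two error terms slightly more transparent, and it avoids the (likely cosmetic) sign on $-x'$ in the paper's map. Your closing remark about why the naive reflection $(X',-X_n)$ is insufficient is exactly the right diagnosis and is precisely the obstacle the shifted reflection is designed to remove.
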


\begin{proof} The gist of this proof is to use a suitable reflection to simplify most of the integral contributions.
For this, we consider the map
$$ T(x):=\left( -x', \,2f(x')+\psi(x')-x_n\right).$$
We observe that when~$|x'|<\delta$ the distance between the Jacobian of~$T$ and minus the identity matrix is bounded from above by
$$ C\sup_{|x'|<\delta}\big( |\nabla f(x')|+|\nabla\psi(x')|\big)\le
C\sup_{|x'|<\delta}\left( |\nabla f(x')-\nabla f(0)|+\frac{\eta}{\e^n}\right)\le
C \left( \delta^\alpha+\frac{\eta}{\e^n}\right)
,$$
and the latter is a small quantity, as long as~$\delta_0$ is chosen sufficiently small.

Moreover, the condition~$T(x)\in{\mathcal{Q}}$ is equivalent to~$|x'|<\delta$
and~$2f(x')+\psi(x')-x_n\in\big(f(x'),\,f(x')+\psi(x')\big)$, which is in turn equivalent to~$x\in{\mathcal{Q}}$.

Similarly, the condition~$T(x)\in{\mathcal{P}}$ is equivalent to~$x\in{\mathcal{R}}$, as well as
the condition~$T(x)\in{\mathcal{R}}$ is equivalent to~$x\in{\mathcal{P}}$.

{F}rom these observations and the change of variable~$(X,Y):=(T(x),T(y))$ we arrive at
$$ \iint_{ {\mathcal{P}}\times {\mathcal{Q}}}\frac{dx\,dy}{|x-y|^{n+s}}=
\left(1+O\left( \delta^\alpha+\frac{\eta}{\e^n}\right)\right)
\iint_{ {\mathcal{R}}\times {\mathcal{Q}}}\frac{dX\,dY}{|X-Y|^{n+s}}.
$$
As a result,
\begin{equation}\label{teguwjyemTaneTPKMS0-2i5r}\left| \iint_{ {\mathcal{P}}\times {\mathcal{Q}}}\frac{dx\,dy}{|x-y|^{n+s}}-
\iint_{ {\mathcal{R}}\times {\mathcal{Q}}}\frac{dx\,dy}{|x-y|^{n+s}}
\right|\le C \left( \delta^\alpha+\frac{\eta}{\e^n}\right)\,
\iint_{ {\mathcal{R}}\times {\mathcal{Q}}}\frac{dx\,dy}{|x-y|^{n+s}}.
\end{equation}

Now we consider the transformation~$S(x):=(x',x_n-f(x'))$.
When~$|x'|<\delta$ the distance between the Jacobian of~$S$ and the identity matrix is bounded from above by
$$ C\sup_{|x'|<\delta} |\nabla f(x')|\le C \delta^\alpha.$$
Besides, if~$x\in{\mathcal{R}}$ then~$S(x)\in\big\{x\in\R^n\,:\,|x'|<\delta{\mbox{ and }}x_n<0\big\}$.
Also, if~$x\in{\mathcal{Q}}$ then
$$S(x)\in\{x\in\R^n\,:\,|x'|<\delta{\mbox{ and }}x_n\in(0,\psi(x'))\}\subseteq
\left\{x\in\R^n\,:\,|x'|<\e{\mbox{ and }}x_n\in\left(0,\frac{C\eta}{\e^{n-1}}\right)\right\}.$$
We stress that we are using here the fact that~$\psi(x')=0$ when~$|x'|\ge\e$.

{F}rom these remarks and~\eqref{teguwjyemTaneTPKMS0-2i5r},
using now the change of variable~$(X,Y):=(S(x),S(y))$, it follows that
\begin{eqnarray*}&&\left| \iint_{ {\mathcal{P}}\times {\mathcal{Q}}}\frac{dx\,dy}{|x-y|^{n+s}}-
\iint_{ {\mathcal{R}}\times {\mathcal{Q}}}\frac{dx\,dy}{|x-y|^{n+s}}
\right|\\&&\qquad\le C \left( \delta^\alpha+\frac{\eta}{\e^n}\right)\,
\iint_{ \{X_n<0\}\times
\left\{|Y'|<\e,\;Y_n\in\left(0,\frac{C\eta}{\e^{n-1}}\right)\right\}
}\frac{dX\,dY}{|X-Y|^{n+s}}.
\end{eqnarray*}
We can thus employ Lemma~\ref{LE:AUSJM-1aux1} with~$r:=\e$ and~$t:=\frac{C\eta}{\e^{n-1}}$
and conclude that
\begin{eqnarray*}\left| \iint_{ {\mathcal{P}}\times {\mathcal{Q}}}\frac{dx\,dy}{|x-y|^{n+s}}-
\iint_{ {\mathcal{R}}\times {\mathcal{Q}}}\frac{dx\,dy}{|x-y|^{n+s}}
\right|\le C \left( \delta^\alpha+\frac{\eta}{\e^n}\right)
\,\e^{n-1}\,\left(\frac{\eta}{\e^{n-1}}\right)^{1-s},
\end{eqnarray*}
from which the desired result follows.\end{proof}

With this preliminary work, we can now complete the proofs of Theorems~\ref{ESEMPIOPTO} and~\ref{ESEMPIOPTO:CO}.

\begin{proof}[Proof of Theorem~\ref{ESEMPIOPTO}]
Up to a rigid motion we can suppose that~$p=e_n$. We let~$\e>0$ and~$\delta>0$, to be taken as small as we wish in what follows.
We also define
$$ {\mathcal{B}}:=\Big\{
x=(x',x_n)\in B_1\setminus B_{1-\delta\e} \, : \, x_n>0 \mbox{ and } |x'|<\e \Big\}.$$
We stress that~${\mathcal{B}}\subseteq B_{\e_0/2}(p)\cap B_1$ as long as~$\e$ is small enough. Also, we 
pick a point~$q\in\mathrm{Reg}_E\cap\Omega$ and we modify the surface of~$\partial E$ in the normal direction
in an~$\e$-neighborhood of~$q$
by a set~${\mathcal{B}}'$ with~$|{\mathcal{B}}'|=
|{\mathcal{B}}|$, see Figure~\ref{GRANDEaddt}
and notice that the geometry of Lemma~\ref{OEHDpimnSBlkOGFS-012rikjmg} can be reproduced, up to a rigid motion.
We stress that~$\eta$ in Lemma~\ref{OEHDpimnSBlkOGFS-012rikjmg} corresponds to the volume of the perturbation
induced by~$\psi$, therefore in this setting we will apply
Lemma~\ref{OEHDpimnSBlkOGFS-012rikjmg} with~$\eta:=|{\mathcal{B}}'|=
|{\mathcal{B}}|\le C\delta\e^n$.

\begin{figure}[h]
\includegraphics[width=0.45\textwidth]{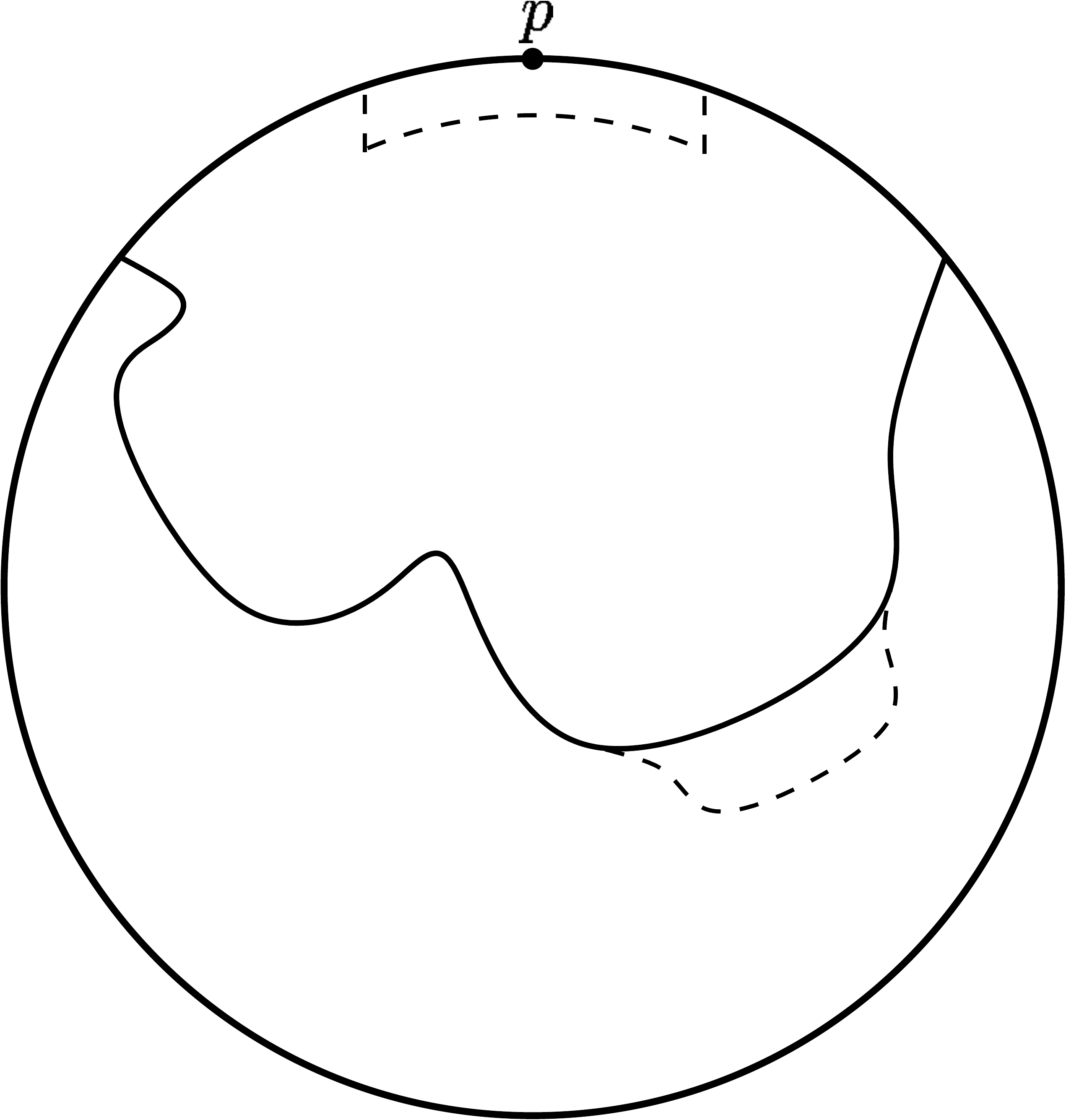}
\caption{Removing the thin set~${\mathcal{B}}$ to~$E$ near~$p$
and adding the thin set~${\mathcal{B}}'$ with the same volume.}
        \label{GRANDEaddt}
\end{figure}

We also denote by~$\Theta$ a cylinder centered at~$q$ (oriented by the normal of~${\mathcal{B}}'$ at~$q$)
of height equal to~$2\delta$ and radius of the basis equal to~$\delta$.
In this way, we have that if~$x\in{\mathcal{B}}'$ and~$y\in\R^n\setminus\Theta$
then~$|x-y|\ge|y-q|-|q-x|\ge\frac\delta2-C\e\ge\frac\delta4$, as long as~$\e$ is small enough, possibly in dependence of~$\delta$,
see Figure~\ref{GRANDEaddtTHETA}, whence
$$ I_1({\mathcal{B}}', B_1\setminus \Theta)\le 
C\int_{{\mathcal{B}}'\times B_1}
\frac{dx\,dy}{\delta^{n+s_1}}\le \frac{C\,|{\mathcal{B}}'|}{\delta^{n+s_1}}.
$$
Consequently,
\begin{equation}\label{JS-0jr-021roihfhFUJEWNfddmsa-1}
\begin{split}
I_1({\mathcal{B}}',B_1\setminus E\setminus {\mathcal{B}}')-I_1({\mathcal{B}}',E)&\le
I_1\big({\mathcal{B}}',(B_1\setminus E\setminus {\mathcal{B}}')\cap\Theta\big)-I_1({\mathcal{B}}',E\cap\Theta)+\frac{C |{\mathcal{B}}'|}{\delta^{n+s_1}}\\&\le
I_1\big({\mathcal{B}}',(B_1\setminus E\setminus {\mathcal{B}}')\cap\Theta\big)-I_1({\mathcal{B}}',E\cap\Theta)+\frac{C \e^n}{\delta^{n-1+s_1}},
\end{split}
\end{equation}
for some~$C>0$ that, as usual, gets renamed line after line.

\begin{figure}[h]
\includegraphics[width=0.45\textwidth]{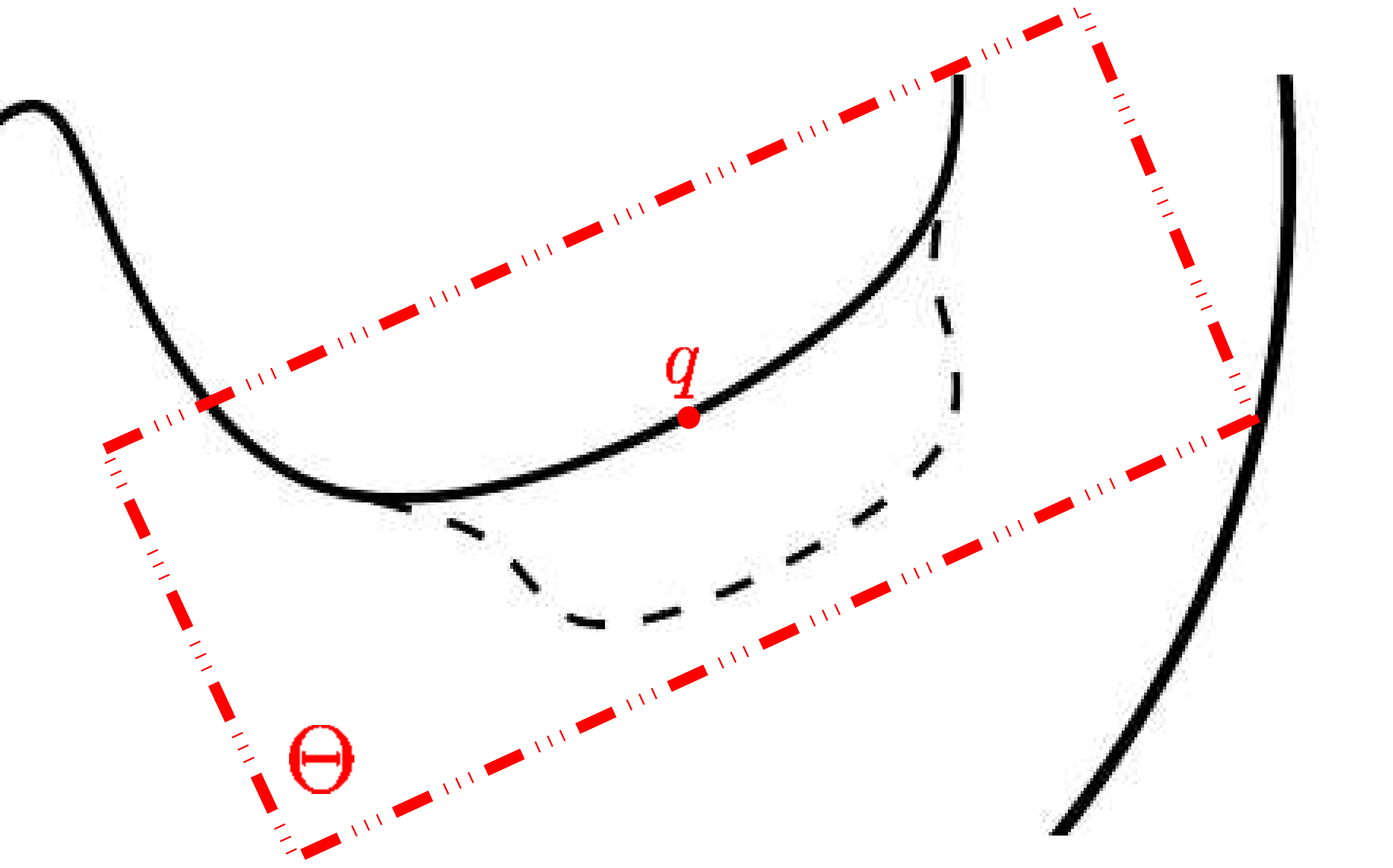}
\caption{Surrounding~${\mathcal{B}}'$ with a small cylinder~$\Theta$.}
        \label{GRANDEaddtTHETA}
\end{figure}

In view of Lemma~\ref{OEHDpimnSBlkOGFS-012rikjmg}, we also know that
$$ I_1\big({\mathcal{B}}',(B_1\setminus E\setminus {\mathcal{B}}')\cap\Theta\big)-I_1({\mathcal{B}}',E\cap\Theta)\le
C\delta^\alpha\e^{(n-1)s_1}\,(\delta\e^n)^{1-s_1}=C\delta^{1-s_1+\alpha}\e^{n-s_1}.
$$
This and~\eqref{JS-0jr-021roihfhFUJEWNfddmsa-1} lead to
\begin{equation}\label{JS-0jr-021roihfhFUJEWNfddmsa-2}
I_1({\mathcal{B}}',B_1\setminus E\setminus {\mathcal{B}}')-I_1({\mathcal{B}}',E)\le
C\delta^{1-s_1+\alpha}\e^{n-s_1}
+\frac{C \e^n}{\delta^{n-1+s_1}}.
\end{equation}

Now we claim that
\begin{equation}\label{0-uyfKJMS0DAVDM-14}\begin{split}&
I_1({\mathcal{B}},B_1\setminus E)+I_1({\mathcal{B}}',E)+
\sigma I_2({\mathcal{B}},B_1^c)\\&\qquad\le
I_1({\mathcal{B}},E\cup{\mathcal{B}}'\setminus{\mathcal{B}})
+I_1({\mathcal{B}}',B_1\setminus E\setminus {\mathcal{B}}')+I_1({\mathcal{B}},{\mathcal{B}}')
+\sigma I_2({\mathcal{B}}',B_1^c).\end{split}\end{equation}
To prove this, we construct a competitor for the minimal set~$E$ and compare their energies.
Indeed, the set~$\widetilde{E}:=(E\setminus{\mathcal{B}})\cup{\mathcal{B}}'$ is a competitor for~$E$, with the same volume of~$E$, and accordingly,
using the notation~$X:=E\setminus{\mathcal{B}}$ and~$Y:=B_1\setminus E\setminus{\mathcal{B}}'$,
\begin{eqnarray*}
0&\ge& {\mathcal{E}}({E})-{\mathcal{E}}(\widetilde{E})\\
&=& I_1(E,B_1\setminus E)-I_1(\widetilde E,B_1\setminus \widetilde E)
+\sigma I_2(E,B_1^c)-\sigma I_2(\widetilde E,B_1^c)\\
&=&  I_1(X\cup{\mathcal{B}},Y\cup{\mathcal{B}}')-I_1(X\cup{\mathcal{B}}',Y\cup{\mathcal{B}})
+\sigma I_2(X\cup{\mathcal{B}},B_1^c)-\sigma I_2(X\cup{\mathcal{B}}',B_1^c)\\&=&
I_1(X,{\mathcal{B}}')+
I_1({\mathcal{B}},Y)-I_1(X,{\mathcal{B}})-I_1({\mathcal{B}}',Y)
+\sigma I_2({\mathcal{B}},B_1^c)-\sigma I_2({\mathcal{B}}',B_1^c)\\&=&
\Big(I_1(E,{\mathcal{B}}')-I_1({\mathcal{B}},{\mathcal{B}}')\Big)+\Big(
I_1({\mathcal{B}},B_1\setminus E)-I_1({\mathcal{B}},{\mathcal{B}}')\Big)\\&&\qquad
-\Big(I_1(E\cup{\mathcal{B}}'\setminus{\mathcal{B}},{\mathcal{B}})-I_1({\mathcal{B}}',{\mathcal{B}}) \Big)
- I_1({\mathcal{B}}',B_1\setminus E\setminus{\mathcal{B}}')
+\sigma I_2({\mathcal{B}},B_1^c)-\sigma I_2({\mathcal{B}}',B_1^c)\\
&=&
I_1(E,{\mathcal{B}}')+
I_1({\mathcal{B}},B_1\setminus E)-I_1({\mathcal{B}},{\mathcal{B}}')
- I_1(E\cup{\mathcal{B}}'\setminus{\mathcal{B}},{\mathcal{B}})
- I_1({\mathcal{B}}',B_1\setminus E\setminus{\mathcal{B}}')\\&&\qquad
+\sigma I_2({\mathcal{B}},B_1^c)-\sigma I_2({\mathcal{B}}',B_1^c).
\end{eqnarray*}
This proves~\eqref{0-uyfKJMS0DAVDM-14}.

By combining~\eqref{JS-0jr-021roihfhFUJEWNfddmsa-2} and~\eqref{0-uyfKJMS0DAVDM-14} we find that
\begin{equation}\label{MSpr2idgekm:20e34r5o}\begin{split}&
I_1({\mathcal{B}},B_1\setminus E)+
\sigma I_2({\mathcal{B}},B_1^c)\\&\qquad\le
I_1({\mathcal{B}},E\cup{\mathcal{B}}'\setminus{\mathcal{B}})
+ I_1({\mathcal{B}},{\mathcal{B}}')
+\sigma I_2({\mathcal{B}}',B_1^c) +C\delta^{1-s_1+\alpha}\e^{n-s_1}
+\frac{C \e^n}{\delta^{n-1+s_1}}.\end{split}\end{equation}
Besides, since the distance between~${\mathcal{B}}'$ and~$B_1^c$ is bounded from below by a uniform
quantity, only depending on~$q$ and~$\e_0$ (and, in particular, independent of~$\e$), we have that
$$I_2({\mathcal{B}}',B_1^c)\le C|{\mathcal{B}}'|=C|{\mathcal{B}}|\le C\e^n,$$
for some~$C>0$ depending only on~$n$, $s_2$, $k_2$, $\e_0$, $q$ and the regularity of~$\partial E$
in the vicinity of~$q$. 
This and~\eqref{MSpr2idgekm:20e34r5o} yield that
\begin{equation}\label{MSpr2idgekm:20e34r5o:2}\begin{split}
\sigma I_2({\mathcal{B}},B_1^c)&\le I_1({\mathcal{B}},E\cup{\mathcal{B}}'\setminus{\mathcal{B}})
+I_1({\mathcal{B}},{\mathcal{B}}')
+C\e^n +C\delta^{1-s_1+\alpha}\e^{n-s_1}
+\frac{C \e^n}{\delta^{n-1+s_1}}
\\&\le I_1({\mathcal{B}},B_1\setminus{\mathcal{B}})
+I_1({\mathcal{B}},{\mathcal{B}}')
+C\e^n+C\delta^{1-s_1+\alpha}\e^{n-s_1}
+\frac{C \e^n}{\delta^{n-1+s_1}}
\\&\le I_1({\mathcal{B}},B_1\setminus{\mathcal{B}})+C\delta^{1-s_1+\alpha}\e^{n-s_1}
+\frac{C \e^n}{\delta^{n-1+s_1}}
,\end{split}\end{equation}
up to renaming~$C$ line after line.

Now, we use the change of variables~$X:=\frac{x-e_n}\e$ and~$Y:=\frac{y-e_n}\e$ to see that
\begin{equation}\label{CHDN:cipw-0202}
\begin{split}
& \e^{s_1-n} I_1({\mathcal{B}},B_1\setminus {\mathcal{B}})
= k_1 \,\e^{s_1-n}
\iint_{{\mathcal{B}}\times(B_1\setminus{\mathcal{B}})} \frac{dx\,dy}{|x-y|^{n+s_1}}
= k_1
\iint_{{\mathcal{Z}_\e}\times {\mathcal{A}}_\e} \frac{dX\,dY}{|X-Y|^{n+s_1}},\end{split}
\end{equation}
where
$$ {\mathcal{Z}}_\e:=\frac{ {\mathcal{B}}-e_n}\e=
\left\{ X\in\R^n\,:\,|X'|<1,\;X_n>-\frac1\e\,\;{\mbox{and}}\;\,
\left| X+\frac{e_n}\e\right|\in\left[\frac1\e-\delta,\frac1\e\right)
\right\}$$
and
$$ {\mathcal{A}}_\e:=\frac{ (B_1\setminus{\mathcal{B}})-e_n}\e=
{\mathcal{L}}_\e\cup{\mathcal{M}}_\e\cup{\mathcal{N}}_\e,$$
with
\begin{eqnarray*} 
{\mathcal{L}}_\e&:=&
\left\{ X\in\R^n\,:\,
\left| X+\frac{e_n}\e\right|<\frac1\e-\delta
\right\},
\\ {\mathcal{M}}_\e&:=&
\left\{ X\in\R^n\,:\,|X'|\ge1\;{\mbox{and}}\;\,
\left| X+\frac{e_n}\e\right|\in\left[\frac1\e-\delta,\frac1\e\right)
\right\}\\ {\mbox{and }}\qquad {\mathcal{N}}_\e&:=&
\left\{ X\in\R^n\,:\,|X'|<1,\;X_n\le-\frac1\e\,\;{\mbox{and}}\;\,
\left| X+\frac{e_n}\e\right|\in\left[\frac1\e-\delta,\frac1\e\right)
\right\}.
\end{eqnarray*}

Similarly,
\begin{equation}\label{MSpr2idgekm:20e34r5o:2-x-090}
\begin{split}
& \e^{s_2-n} I_2({\mathcal{B}},B_1^c)
= k_2\, \e^{s_2-n}
\iint_{{\mathcal{B}}\times B_1^c} \frac{dx\,dy}{|x-y|^{n+s_2}}
= k_2
\iint_{{\mathcal{Z}}_\e\times{\mathcal{O}}_\e} \frac{dX\,dY}{|X-Y|^{n+s_2}},\end{split}
\end{equation}
where
$$ {\mathcal{O}}_\e :=
\left\{ X\in\R^n\,:\,
\left| X+\frac{e_n}\e\right|\ge\frac1\e
\right\}.$$

Plugging~\eqref{CHDN:cipw-0202} and~\eqref{MSpr2idgekm:20e34r5o:2-x-090}
into~\eqref{MSpr2idgekm:20e34r5o:2},
we arrive at
\begin{equation}\label{CHDN:cipw-0201}
\sigma\,\e^{s_1-s_2} \,k_2
\iint_{{\mathcal{Z}}_\e\times{\mathcal{O}}_\e} \frac{dX\,dY}{|X-Y|^{n+s_2}}\le
k_1 \iint_{{\mathcal{Z}_\e}\times {\mathcal{A}}_\e} \frac{dX\,dY}{|X-Y|^{n+s_1}}+C\delta^{1-s_1+\alpha}
+\frac{C \e^{s_1}}{\delta^{n-1+s_1}}.\end{equation}

Now we claim that, if~$\e>0$ is suitably small, possibly
in depedence of~$\delta$, then
\begin{equation}\label{MSpr2idgekm:20e34r5o:2iu}
{\mathcal{B}}\subseteq
\big\{ x=(x',x_n)\in\R^n\,:\,|x'|<\e\;\;{\mbox{and}}\;\; x_n\in[1-(1+\delta)\delta\e,1)\big\}.
\end{equation}
Indeed, if~$x\in {\mathcal{B}}$ then
\begin{eqnarray*}&&
x_n=\sqrt{|x|^2-|x'|^2}\ge\sqrt{(1-\delta\e)^2-\e^2}
=\sqrt{1-2\delta\e +\delta^2\e^2-\e^2}\\&&\qquad\qquad\ge
\sqrt{1-2(1+\delta)\delta\e +(1+\delta)^2\delta^2\e^2}=
\sqrt{(1-(1+\delta)\delta\e)^2}=1-(1+\delta)\delta\e
\end{eqnarray*}
and this establishes~\eqref{MSpr2idgekm:20e34r5o:2iu}.

It follows from~\eqref{MSpr2idgekm:20e34r5o:2iu} that
\begin{equation}\label{MSpr2idgekm:20e34r5o:2iu:9u1j420}
{\mathcal{Z}}_\e\subseteq
\big\{ X=(X',X_n)\in\R^n\,:\,|X'|<1\;\;{\mbox{and}}\;\; X_n\in[-(1+\delta)\delta,0)\big\}=:{\mathcal{Z}}_\delta^\star.\end{equation}
Note also that
\begin{equation}\label{CHDN:cipw-0201-1}
\mathcal{O}_\e\supseteq \{ Y_n>0\}.\end{equation}
We now claim that
\begin{equation}\label{CHDN:cipw-0201-2}
{\mathcal{Z}}_\e\supseteq\left\{X\in\R^n\,:\, |X'|<1,\;X_n\in(-\delta,0)\,\;{\mbox{and}}\;\,
\left| X+\frac{e_n}\e\right|<\frac1\e
\right\}=:{\mathcal{W}}_\e.\end{equation}
To check this, suppose by contradiction that there exists~$X\in{\mathcal{W}}_\e$ with~$\left| X+\frac{e_n}\e\right|<\frac1\e-\delta$.
Then, we have that
\begin{eqnarray*}&&
0<\left(\frac1\e-\delta\right)^2-\left| X+\frac{e_n}\e\right|^2=
\frac1{\e^2}+\delta^2-\frac{2\delta}\e -|X'|^2-\left( X_n+\frac1\e\right)^2\\&&\qquad\qquad=
\delta^2-\frac{2\delta}\e -|X'|^2- X_n^2-\frac{2X_n}\e\le
\delta^2 -|X'|^2- X_n^2,
\end{eqnarray*}
that is~$|X|<\delta$,
and thus
\begin{eqnarray*}
\frac1\e-\delta>\left| X+\frac{e_n}\e\right| \ge\left|\frac{e_n}\e\right|-|X|=\frac1\e-|X|>
\frac1\e-\delta.
\end{eqnarray*}
This is a contradiction which establishes~\eqref{CHDN:cipw-0201-2}.

Hence, by~\eqref{CHDN:cipw-0201-1} and~\eqref{CHDN:cipw-0201-2},
we see that
\begin{equation}\label{CHDN:cipw-0201-3}
\begin{split}&
\iint_{{\mathcal{Z}}_\e\times{\mathcal{O}}_\e} \frac{dX\,dY}{|X-Y|^{n+s_2}}\ge
\iint_{{\mathcal{W}}_\e\times\{ Y_n>0\}} \frac{dX\,dY}{|X-Y|^{n+s_2}}\\&\qquad
\ge\iint_{{\mathcal{W}}_\delta^\star\times\{ Y_n>0\}} \frac{dX\,dY}{|X-Y|^{n+s_2}}-
\iint_{({\mathcal{W}}^\star_\delta\setminus{\mathcal{W}}_\e)\times\{ Y_n>0\}} \frac{dX\,dY}{|X-Y|^{n+s_2}},
\end{split}\end{equation}
where
$${\mathcal{W}}_\delta^\star:=
\big\{ X\in\R^n\,:\,|X'|<1\mbox{ and }X_n\in(-\delta,0)\big\}
.$$
Since
$$ \iint_{({\mathcal{W}}_\delta^\star\setminus{\mathcal{W}}_\e)\times\{ Y_n>0\}} \frac{dX\,dY}{|X-Y|^{n+s_2}}
\le \iint_{{\mathcal{W}}_\delta^\star\times\{ Y_n>0\}} \frac{dX\,dY}{|X-Y|^{n+s_2}}<+\infty$$
and
$$ \lim_{\e\searrow0}\big|{\mathcal{W}}_\delta^\star\setminus{\mathcal{W}}_\e\big|=0,$$
we have that
$$ \lim_{\e\searrow0}\iint_{({\mathcal{W}}_\delta^\star\setminus{\mathcal{W}}_\e)\times\{ Y_n>0\}} \frac{dX\,dY}{|X-Y|^{n+s_2}}=0$$
and, as a consequence, we infer from~\eqref{CHDN:cipw-0201-3} that
\begin{equation}\label{CHDN:cipw-0201-4} \liminf_{\e\searrow0}
\iint_{{\mathcal{Z}}_\e\times{\mathcal{O}}_\e} \frac{dX\,dY}{|X-Y|^{n+s_2}}\ge\iint_{{\mathcal{W}}_\delta^\star\times\{ Y_n>0\}} \frac{dX\,dY}{|X-Y|^{n+s_2}}.\end{equation}
We also note that if~$a\gg b>0$ then
$$ \left|\sqrt{a^2-b^2}-a+\frac{b^2}{2a}\right|=\left|a\sqrt{1-\frac{b^2}{a^2}}-a+\frac{b^2}{2a}\right|
\le \frac{b^4}{a^3},
$$
whence if~$X\in{\mathcal{N}_\e}$ then
\begin{eqnarray*}&&
-X_n-\frac2\e =
\left|X_n+\frac1\e\right|-\frac1\e=
\sqrt{\left|X+\frac{e_n}\e\right|^2-|X'|^2}-\frac1\e\\&&\quad\in\left[
\left|X+\frac{e_n}\e\right|-\frac{|X'|^2}{2\left|X+\frac{e_n}\e\right|}-
\frac{|X'|^4}{\left|X+\frac{e_n}\e\right|^3}
-\frac1\e,\;
\left|X+\frac{e_n}\e\right|-\frac{|X'|^2}{2\left|X+\frac{e_n}\e\right|}+
\frac{|X'|^4}{\left|X+\frac{e_n}\e\right|^3}
-\frac1\e
\right]\\&&\quad\subseteq\left[
-\delta-2\e
,\;2\e
\right]\subseteq[-2\delta,2\delta],
\end{eqnarray*} as long as~$\e$ is sufficiently small,
leading to
\begin{equation}\label{MZlijMSTZZmsdMAkdDy0-masc67bijnsdaAc}
|{\mathcal{N}}_\e|\le 
\left|
\left\{ X\in\R^n\,:\,|X'|<1,\,\;{\mbox{and}}\;\,
X_n\in\left[-\frac2\e-2\delta,-\frac2\e+2\delta\right]
\right\}
\right|\le
C\delta.\end{equation}

Furthermore, if~$X\in{\mathcal{Z}_\e}$ then~$X_n\ge-(1+\delta)\delta$, thanks to~\eqref{MSpr2idgekm:20e34r5o:2iu},
and therefore if~$Y\in {\mathcal{N}}_\e$ we have that
$$ |X-Y|\ge X_n-Y_n\ge -(1+\delta)\delta+\frac1\e\ge \frac{1}{2\e}.$$
This and~\eqref{MZlijMSTZZmsdMAkdDy0-masc67bijnsdaAc} yield that
\begin{equation}\label{98uytgfguiuhgvcvghjhbvb092r5tPKJNSkdmwa}
\iint_{{\mathcal{Z}_\e}\times {\mathcal{N}}_\e} \frac{dX\,dY}{|X-Y|^{n+s_1}}\le
C\e^{n+s_1}\,|{\mathcal{Z}_\e}|\;| {\mathcal{N}}_\e|\le C\e^{n+s_1}.
\end{equation}
Now we set
$$ {\mathcal{M}}_\e':={\mathcal{M}}_\e\cap B_2\qquad{\mbox{and}}\qquad
{\mathcal{M}}_\e'':={\mathcal{M}}_\e\setminus B_2.$$
We remark that, if~$\e>0$ is suitably small, possibly
in depedence of~$\delta$, then
\begin{equation}\label{PALSKMxdcio8we9ghknqkdhfFosdfv}
{\mathcal{M}}_\e'\subseteq\big\{X\in\R^n\,:\,|X'|\in[1,2]{\mbox{ and }}X_n\in[-(1+\delta)\delta,0)\big\}=:
{\mathcal{M}}^\star_\delta.
\end{equation}
Indeed, if~$X\in{\mathcal{M}}_\e'$ then~$|X'|\ge1$ and~$|X'|\le|X|<2$. Furthermore,
$$ 1+\left| X_n+\frac{1}\e\right|^2\le |X'|^2+\left| X_n+\frac{1}\e\right|^2=
\left| X+\frac{e_n}\e\right|^2\le\frac1{\e^2}$$
which gives that~$X_n<0$.

Moreover,
\begin{eqnarray*}
4+\left| X_n+\frac{1}\e\right|^2\ge
|X'|^2+\left| X_n+\frac{1}\e\right|^2=
\left| X+\frac{e_n}\e\right|^2\ge\left(\frac1\e-\delta\right)^2.
\end{eqnarray*}
Since~$X_n\ge-|X|\ge-2$, this gives that
\begin{eqnarray*}&&
X_n+\frac{1}\e
=\sqrt{\left| X_n+\frac{1}\e\right|^2}
\ge\sqrt{\left(\frac1\e-\delta\right)^2-4}=
\sqrt{\frac1{\e^2}-\frac{2\delta}\e+\delta^2-4}\\&&\qquad=\frac1\e
\sqrt{1- 2\delta \e+\delta^2\e^2-4\e^2}\ge\frac1\e(1-(1+\delta)\delta\e)\end{eqnarray*}
and accordingly~$X_n\ge-(1+\delta)\delta$. These observations complete the proof of~\eqref{PALSKMxdcio8we9ghknqkdhfFosdfv}.

We now use~\eqref{PALSKMxdcio8we9ghknqkdhfFosdfv}
in combination with~\eqref{MSpr2idgekm:20e34r5o:2iu:9u1j420}.
In this way,
we see that
\begin{equation}\label{PALSKMxdcio8we9ghknqkdhfFosdfv2}
\iint_{{\mathcal{Z}_\e}\times {\mathcal{M}}'_\e} \frac{dX\,dY}{|X-Y|^{n+s_1}}\le
\iint_{{\mathcal{Z}_\delta^\star}\times {\mathcal{M}}^\star_\delta} \frac{dX\,dY}{|X-Y|^{n+s_1}}.
\end{equation}

Besides, if~$X\in{\mathcal{Z}_\e}$ and~$Y\in {\mathcal{M}}''_\e$ then~$|X-Y|\ge|Y|-|X|\ge 2-\frac32=\frac12$
and, as a result,
\begin{eqnarray*}
\iint_{{\mathcal{Z}_\e}\times {\mathcal{M}}''_\e} \frac{dX\,dY}{|X-Y|^{n+s_1}}\le
C \,|{\mathcal{Z}_\e}|\,\int_{\R^n\setminus B_{1/2}}\frac{dZ}{|Z|^{n+s_1}}\le C\delta.
\end{eqnarray*}
Combining this and~\eqref{PALSKMxdcio8we9ghknqkdhfFosdfv2} we conclude that
\begin{equation*}
\iint_{{\mathcal{Z}_\e}\times {\mathcal{M}}_\e} \frac{dX\,dY}{|X-Y|^{n+s_1}}\le
\iint_{{\mathcal{Z}_\delta^\star}\times {\mathcal{M}}^\star_\delta} \frac{dX\,dY}{|X-Y|^{n+s_1}}+C\delta.
\end{equation*}
Using the latter inequality and~\eqref{98uytgfguiuhgvcvghjhbvb092r5tPKJNSkdmwa}
we obtain that
\begin{equation}\label{98uytgfguiuhgvcvghjhbvb092r5tPKJNSkdmwa-F}
\begin{split}&
\limsup_{\e\searrow0}
\iint_{{\mathcal{Z}_\e}\times {\mathcal{A}}_\e} \frac{dX\,dY}{|X-Y|^{n+s_1}}\\&\qquad\qquad\le
\iint_{{\mathcal{Z}_\delta^\star}\times {\mathcal{M}}^\star_\delta} \frac{dX\,dY}{|X-Y|^{n+s_1}}+C\delta+
\limsup_{\e\searrow0}
\iint_{{\mathcal{Z}_\e}\times {\mathcal{L}}_\e} \frac{dX\,dY}{|X-Y|^{n+s_1}}
.\end{split}\end{equation}

Now we consider the map
$$ \big\{\R^n:\,|X'|<2\big\}\ni X=(X',X_n)\longmapsto T(X):=\left(X',\,
X_n-\sqrt{\left(\frac1\e-\delta\right)^2-|X'|^2}+\frac1\e
\right)
$$
and we observe that if~$X\in{\mathcal{Z}}_\e$ then~$\underline{X}:=T(X)$
satisfies~$|\underline{X}'|<1$
and
\begin{eqnarray*}&&
\underline{X}_n=\left|X_n+\frac1\e\right|-\sqrt{\left(\frac1\e-\delta\right)^2-|X'|^2}
=\sqrt{\left|X+\frac{e_n}\e\right|^2-|X'|^2}-\sqrt{\left(\frac1\e-\delta\right)^2-|X'|^2}\\&&\qquad\qquad
\in\left[0,\,\sqrt{\frac1{\e^2}-|X'|^2}-\sqrt{\left(\frac1\e-\delta\right)^2-|X'|^2}\right)\subseteq
[0,(1+\delta)\delta].
\end{eqnarray*}
In addition, if~$Y\in {\mathcal{L}}_\e':= {\mathcal{L}}_\e\cap B_2$
and~$\underline{Y}:=T(Y)$, we have that~$|\underline{Y'}|<2$ and
\begin{eqnarray*}&&
\underline{Y}_n\le\left|Y_n+\frac1\e\right|-\sqrt{\left(\frac1\e-\delta\right)^2-|Y'|^2}
=\sqrt{\left|Y+\frac{e_n}\e\right|^2-|Y'|^2}-\sqrt{\left(\frac1\e-\delta\right)^2-|Y'|^2}\le0.
\end{eqnarray*}
We also observe that the distance of the Jacobian matrix of~$T$ from the identity is bounded from above by
$$ C\left|D_{X'} \sqrt{\left(\frac1\e-\delta\right)^2-|X'|^2}\right|\le
\frac{C|X'|}{ \sqrt{\left(\frac1\e-\delta\right)^2-|X'|^2}}\le C\e,
$$
yielding that, in the above notation,~$|\underline{X}-\underline{Y}|\le(1+ C\e)|X-Y|$, with the freedom, as usual,
of renaming~$C$.

These observations allow us to conclude that
\begin{equation}\label{KJLND7fewishdkfbweoikhf3oiwgrtp32uty5739p4uiwegfibuwejksbv}
\iint_{{\mathcal{Z}_\e}\times {\mathcal{L}}_\e'} \frac{dX\,dY}{|X-Y|^{n+s_1}}\le
(1+C\e)\iint_{{\mathcal{X}}_\delta^\star\times {\mathcal{Y}}^\star} \frac{d\underline{X}\,d\underline{Y}}{|\underline{X}-\underline{Y}|^{n+s_1}}
\end{equation}
where
\begin{eqnarray*}&&
{\mathcal{X}}_\delta^\star:=\big\{ X\in\R^n\,:\,|X'|<1 {\mbox{ and }} X_n\in(0,(1+\delta)\delta)\big\}\\
{\mbox{and }}&&{\mathcal{Y}}^\star:=\big\{ X\in\R^n\,:\,|X'|<2 {\mbox{ and }} X_n<0\big\}.
\end{eqnarray*}
Also, setting~${\mathcal{L}}_\e'':= {\mathcal{L}}_\e\setminus B_2$,
we have that
$$ \iint_{{\mathcal{Z}_\e}\times {\mathcal{L}}_\e''} \frac{dX\,dY}{|X-Y|^{n+s_1}}\le
C\,|{\mathcal{Z}_\e}|\int_{\R^n\setminus B_{1/2}}\frac{dZ}{|Z|^{n+s_1}}\le C\delta.$$
Combining this inequality and~\eqref{KJLND7fewishdkfbweoikhf3oiwgrtp32uty5739p4uiwegfibuwejksbv}
we find that
\begin{equation*}
\iint_{{\mathcal{Z}_\e}\times {\mathcal{L}}_\e} \frac{dX\,dY}{|X-Y|^{n+s_1}}\le(1+
C\e)\iint_{{\mathcal{X}}_\delta^\star\times {\mathcal{Y}}^\star} \frac{d\underline{X}\,d\underline{Y}}{|\underline{X}-\underline{Y}|^{n+s_1}}+C\delta.
\end{equation*}
{F}rom this and~\eqref{98uytgfguiuhgvcvghjhbvb092r5tPKJNSkdmwa-F} we arrive at
\begin{equation*}
\begin{split}&
\limsup_{\e\searrow0}
\iint_{{\mathcal{Z}_\e}\times {\mathcal{A}}_\e} \frac{dX\,dY}{|X-Y|^{n+s_1}}\\&\qquad\qquad\le
\iint_{{\mathcal{Z}_\delta^\star}\times {\mathcal{M}}^\star_\delta} \frac{dX\,dY}{|X-Y|^{n+s_1}}+
\limsup_{\e\searrow0}(1+C\e)\iint_{{\mathcal{X}}_\delta^\star\times {\mathcal{Y}}^\star} \frac{d\underline{X}\,d\underline{Y}}{|\underline{X}-\underline{Y}|^{n+s_1}}+C\delta
.\end{split}\end{equation*}
Thus, given~$\delta>0$, to be taken conveniently small, we consider the limit~$\e\searrow0$
and we deduce from the latter inequality, \eqref{CHDN:cipw-0201} and~\eqref{CHDN:cipw-0201-4}
that, as~$\e\searrow0$,
\begin{equation}\label{CHDN:cipw-0201-4AB}\begin{split}&
\sigma\,\e^{s_1-s_2} \,k_2\left(
\iint_{{\mathcal{W}}_\delta^\star\times\{ Y_n>0\}} \frac{dX\,dY}{|X-Y|^{n+s_2}}+o(1)
\right)\\&\quad\le
k_1 \left(
\iint_{{\mathcal{Z}_\delta^\star}\times {\mathcal{M}}^\star_\delta} \frac{dX\,dY}{|X-Y|^{n+s_1}}+(1+C\e)\iint_{{\mathcal{X}}_\delta^\star\times {\mathcal{Y}}^\star} \frac{d\underline{X}\,d\underline{Y}}{|\underline{X}-\underline{Y}|^{n+s_1}} \right)\\&\qquad\qquad+C\delta+C\delta^{1-s_1+\alpha}
+\frac{C \e^{s_1}}{\delta^{n-1+s_1}}.\end{split}\end{equation}
This yields that necessarily
\begin{equation}\label{CHDN:cipw-0201-55}
s_1\ge s_2.\end{equation}

Furthermore, if~$s_1=s_2$ then we obtain, passing to the limit~\eqref{CHDN:cipw-0201-4AB}
as~$\e\searrow0$, that
\begin{equation}\label{CHDN:cipw-0201-55-AB}\begin{split}&
\sigma \,k_2
\iint_{{\mathcal{W}}_\delta^\star\times\{ Y_n>0\}} \frac{dX\,dY}{|X-Y|^{n+s_1}}\\&\quad\le
k_1 \left(
\iint_{{\mathcal{Z}_\delta^\star}\times {\mathcal{M}}^\star_\delta} \frac{dX\,dY}{|X-Y|^{n+s_1}}+\iint_{{\mathcal{X}}_\delta^\star\times {\mathcal{Y}}^\star} \frac{d\underline{X}\,d\underline{Y}}{|\underline{X}-\underline{Y}|^{n+s_1}} \right)
+C\delta+C\delta^{1-s_1+\alpha}.\end{split}\end{equation}
We are now ready to send~$\delta\searrow0$. To this end, we multiply~\eqref{CHDN:cipw-0201-55-AB}
by~$\delta^{s_1-1}$ and we make use of Lemmata~\ref{LE:AUSJM-1aux1} and~\ref{LE:AUSJM-1aux2} to find that
\begin{eqnarray*}
c_\star\,\sigma \,k_2&=&
\lim_{\delta\searrow0}\sigma \,k_2\,\delta^{s_1-1}
\iint_{{\mathcal{W}}_\delta^\star\times\{ Y_n>0\}} \frac{dX\,dY}{|X-Y|^{n+s_1}}\\&\le&\lim_{\delta\searrow0}\left[
k_1\,\delta^{s_1-1} \left(
\iint_{{\mathcal{Z}_\delta^\star}\times {\mathcal{M}}^\star_\delta} \frac{dX\,dY}{|X-Y|^{n+s_1}}+\iint_{{\mathcal{X}}_\delta^\star\times {\mathcal{Y}}^\star} \frac{d\underline{X}\,d\underline{Y}}{|\underline{X}-\underline{Y}|^{n+s_1}} \right)
+C\delta^{s_1}+C\delta^{\alpha}\right]\\&\le&\lim_{\delta\searrow0}\left[
C\delta^{s_1}(1+\delta)+c_\star\, k_1\,(1+\delta)^{1-s_1}+C\delta^{s_1}+C\delta^{\alpha}\right]\\&=&c_\star\, k_1
\end{eqnarray*}
and therefore~$\sigma k_2\le k_1$.
Thanks to this, we have that,
to complete the proof of Theorem~\ref{ESEMPIOPTO}, it only remains to rule out
the case~$s_1=s_2$ and~$k_1=\sigma k_2$. In this situation,
$$ {\mathcal{C}}(F)=\mathcal{E}(F)=k_1\iint_{F\times F^c}\frac{dx\,dy}{|x-y|^{n+s_1}},$$
hence all the minimizers with prescribed volume correspond to balls, thanks to~\cite{MR2469027}.
But this violates the assumptions about the point~$p$ in Theorem~\ref{ESEMPIOPTO}.\end{proof}

\begin{proof}[Proof of Theorem~\ref{ESEMPIOPTO:CO}]
This can be seen as a counterpart of Theorem~\ref{ESEMPIOPTO} based on complementary sets.
For this argument, we denote by~${\mathcal{C}}_\sigma$, instead of~$\mathcal{C}$, the functional in~\eqref{variationalproblem},
in order to showcase explicitly its dependence on the
relative adhesion coeﬃcient~$\sigma$. Thus, in the setting of Theorem~\ref{ESEMPIOPTO:CO},
if~$F\subseteq\Omega$ and~$\widetilde{F}:=\Omega\setminus F$,
\begin{eqnarray*}
{\mathcal{C}}_\sigma(\widetilde{F})&=&
I_1\big(\Omega\setminus F, (\Omega\setminus F)^c\cap\Omega\big)+\sigma\, I_2(\Omega\setminus F,\Omega^c)
\\&=&I_1(\Omega\setminus F, F)+\sigma\, I_2(\Omega\setminus F,\Omega^c)\\&=&
{\mathcal{C}}_{-\sigma}(F)+\sigma\, I_2( F,\Omega^c)+\sigma\, I_2(\Omega\setminus F,\Omega^c)\\
&=&{\mathcal{C}}_{-\sigma}(F)+\sigma\, I_2( \Omega,\Omega^c).
\end{eqnarray*}
Since the latter term does not depend on~$F$, we see that if~$E$, as in the statement of Theorem~\ref{ESEMPIOPTO:CO}, is
a volume-constrained minimizer of~$\mathcal{C}_{\sigma}$, then~$\widetilde{E}:=\Omega\setminus E$ is
a volume-constrained minimizer of~$\mathcal{C}_{-\sigma}$. Now, the set~$\widetilde{E}$ fulfills the assumptions of
Theorem~\ref{ESEMPIOPTO} with~$\sigma$ replaced by~$-\sigma$. It follows that
either~$s_1>s_2$, or~$s_1=s_2$ and~$k_1>-\sigma k_2$, as desired.
\end{proof}

\section{Unique determination of the contact angle and proof of Theorem~\ref{CO:AN:CO}}\label{CO:AN:CO:S}

Here we discuss the existence and uniqueness theory for the equation that prescribes
the nonlocal angle of contact between the droplet and the container. This analysis will ultimately lead to the proof
of Theorem~\ref{CO:AN:CO}: for this, it is convenient to perform some integral computations
in order to appropriately rewrite integral interactions involving cones, detecting cancellations,
using a dimensional reduction argument and a well designed notation of polar angle with respect to the kernel singularity.
The details go as follows.

\begin{lem}\label{JSIMDESEM}
In the notation of~\eqref{Jtheta1theta2},
\eqref{Jtheta1theta2-Di2}, \eqref{Jtheta1theta2-Di3} and~\eqref{Jtheta1theta2-Di4}, if~$\vartheta\in(0,\pi)$, then
\begin{equation}\label{D13A5D8-KS-pokme-1}\begin{split}&
\int_{J_{\vartheta,\pi}}\frac{a_1(\overrightarrow{x-e(\vartheta)})}{|x-e(\vartheta)|^{n+s_1}}\, dx-
\int_{J_{0,\vartheta}}\frac{a_1(\overrightarrow{x-e(\vartheta)})}{|x-e(\vartheta)|^{n+s_1}}\, dx\\&\qquad=
\frac{1}{s_1 (\sin\vartheta)^{s_1}}\left(
\int_{0}^\vartheta\phi_1(\alpha)\,(\sin\alpha)^{s_1}\,d\alpha
-\int_{\vartheta}^\pi \phi_1(\alpha)\,(\sin\alpha)^{s_1}\,d\alpha\right).\end{split}\end{equation}
\end{lem}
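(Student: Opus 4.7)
The plan is to prove Lemma~\ref{JSIMDESEM} in three main steps: first, a reduction from $\R^n$ to $\R^2$ by integrating out the middle coordinates; second, a passage to polar coordinates centered at the singularity $e^\star(\vartheta)$; and third, an explicit evaluation that exploits the $\pi$-periodicity of $\phi_1$ to cancel the divergent contributions coming from the two sectors meeting at $e^\star(\vartheta)$.

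First I would reduce to two dimensions. Writing $x=(x_1,\bar y,x_n)\in\R^n$ with $\bar y\in\R^{n-2}$ and observing that $e(\vartheta)=(\cos\vartheta,0,\sin\vartheta)$ has vanishing middle coordinates, one has $x-e(\vartheta)=(p_1,\bar y,p_2)$ where $p:=(x_1,x_n)-e^\star(\vartheta)\in\R^2$, so that $|x-e(\vartheta)|^2=|p|^2+|\bar y|^2$. Since $J_{\vartheta_1,\vartheta_2}=J_{\vartheta_1,\vartheta_2}^\star\times\R^{n-2}$ in these coordinates, Fubini allows the inner $\bar y$-integration to be performed first; the change of variable $\bar y=|p|\bar z$, combined with the definition of $a_1^\star$ in~\eqref{Jtheta1theta2-Di3}, yields
\[\int_{\R^{n-2}}\frac{a_1(\overrightarrow{(p_1,\bar y,p_2)})}{(|p|^2+|\bar y|^2)^{(n+s_1)/2}}\,d\bar y=\frac{a_1^\star(\hat p)}{|p|^{2+s_1}},\qquad \hat p:=p/|p|.\]
This reduces each integral on the left-hand side of~\eqref{D13A5D8-KS-pokme-1} to its two-dimensional counterpart over $J_{\vartheta,\pi}^\star$ or $J_{0,\vartheta}^\star$, with kernel $a_1^\star(\cdot)/|\cdot|^{2+s_1}$.

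Next I would switch to polar coordinates centered at $e^\star(\vartheta)$, writing $y-e^\star(\vartheta)=\tau(\cos\mu,\sin\mu)$, so $dy=\tau\,d\tau\,d\mu$ and the integrand becomes $\phi_1(\mu)\tau^{-1-s_1}\,d\tau\,d\mu$. A direct computation shows that along the ray $\tau\mapsto e^\star(\vartheta)+\tau(\cos\mu,\sin\mu)$, the polar angle $\Phi(\tau)$ of the current point satisfies $\Phi'(\tau)=\sin(\mu-\vartheta)/|y|^2$, so that $\Phi$ is monotone in $\tau$. Using this, one identifies when the ray lies in each sector: for $\mu\in(\vartheta,\pi)$ (resp.\ $\mu\in(0,\vartheta)$) it remains in $J_{\vartheta,\pi}^\star$ (resp.\ $J_{0,\vartheta}^\star$) for all $\tau>0$; for $\mu\in(\pi,\vartheta+\pi)$ (resp.\ $\mu\in(\vartheta-\pi,0)$) it enters at $\tau=0$ and leaves at $\tau^*(\mu)=\sin\vartheta/|\sin\mu|$, obtained by solving $\sin\vartheta+\tau\sin\mu=0$, so that the exit point lies on the negative (resp.\ positive) $x_1$-axis; for all remaining $\mu$ the ray does not enter the sector at all.

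Finally, inserting the cutoff $\{|y-e^\star(\vartheta)|>\varepsilon\}$ (which realizes the principal value), an explicit $\tau$-integration gives, for the $J_{\vartheta,\pi}^\star$ term,
\[\frac{\varepsilon^{-s_1}}{s_1}\int_\vartheta^{\vartheta+\pi}\phi_1(\mu)\,d\mu-\frac{1}{s_1}\int_\pi^{\vartheta+\pi}\phi_1(\mu)\big(\tau^*(\mu)\big)^{-s_1}\,d\mu,\]
and the analogous expression for $J_{0,\vartheta}^\star$ with angular ranges $(\vartheta-\pi,\vartheta)$ and $(\vartheta-\pi,0)$. The $\pi$-periodicity of $\phi_1$ recorded in~\eqref{AIPA2} gives $\int_\vartheta^{\vartheta+\pi}\phi_1=\int_{\vartheta-\pi}^\vartheta\phi_1$, so the divergent $\varepsilon^{-s_1}$ contributions cancel upon taking the difference. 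The substitutions $\alpha=\mu-\pi$ and $\alpha=\mu+\pi$ then turn the two surviving integrals into integrals over $(0,\vartheta)$ and $(\vartheta,\pi)$ respectively, both with integrand $\phi_1(\alpha)(\sin\alpha)^{s_1}/(\sin\vartheta)^{s_1}$, and combining the resulting expressions produces~\eqref{D13A5D8-KS-pokme-1}. The main obstacle is precisely the geometric bookkeeping of the polar decomposition — identifying which directions $\mu$ from $e^\star(\vartheta)$ enter each sector and where the ray leaves — together with the $\pi$-periodic cancellation of the $\varepsilon^{-s_1}$ divergence, on which the whole computation hinges.
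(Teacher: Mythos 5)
Your proof is correct in its essentials and follows a route that is related to, but not identical to, the paper's. The shared backbone is the dimensional reduction to the planar kernel $a_1^\star(\cdot)/|\cdot|^{2+s_1}$ (your Fubini-plus-scaling step is exactly the computation in~\eqref{KSM X-10oqlrnfXcsw}), followed by polar coordinates centred at the kernel singularity $e^\star(\vartheta)$ with weight $\phi_1$. Where the two arguments genuinely differ is in how the principal value is tamed. You insert the cutoff $\{|y-e^\star(\vartheta)|>\varepsilon\}$, decompose each sector into the ``infinite'' rays ($\mu\in(\vartheta,\pi)$, resp.\ $\mu\in(0,\vartheta)$) and the ``finite'' rays with exit time $\tau^*(\mu)=\sin\vartheta/|\sin\mu|$ ($\mu\in(\pi,\vartheta+\pi)$, resp.\ $\mu\in(\vartheta-\pi,0)$), integrate out $\tau$, and cancel the two $\varepsilon^{-s_1}$-divergences using the $\pi$-periodicity of $\phi_1$. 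The paper instead performs this cancellation geometrically \emph{before} any polar decomposition: the point reflection $x\mapsto 2e(\vartheta)-x$ identifies $J_{0,\vartheta}\cap\{x_n<2\sin\vartheta\}$ with $J_{\vartheta,\pi}\cap\{x_n<2\sin\vartheta\}$ (using evenness of $a_1$, which is the $n$-dimensional avatar of your $\pi$-periodicity), leaving two \emph{absolutely convergent} integrals over $\{x_n>2\sin\vartheta\}$ that in polar coordinates are precisely the infinite-ray contributions $\rho>\sin\vartheta/\sin\mu$ with $\mu\in(\vartheta,\pi)$ and $\mu\in(0,\vartheta)$. The paper's route therefore never has to parametrise, or even mention, the finite rays and the exit time $\tau^*$; your route is a little more computational but makes the principal-value mechanism entirely explicit, and the two are in perfect agreement.

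One caution about the very last line, where you write that ``combining the resulting expressions produces~\eqref{D13A5D8-KS-pokme-1}.'' If you carry the substitutions $\alpha=\mu-\pi$ (first surviving term) and $\alpha=\mu+\pi$ (second) through carefully, the difference of the two surviving integrals is
\begin{equation*}
\frac{1}{s_1(\sin\vartheta)^{s_1}}\left(\int_\vartheta^\pi\phi_1(\alpha)(\sin\alpha)^{s_1}\,d\alpha-\int_0^\vartheta\phi_1(\alpha)(\sin\alpha)^{s_1}\,d\alpha\right),
\end{equation*}
i.e.\ the \emph{negative} of the displayed right-hand side. This is not a defect of your argument: the paper's own proof, read off from~\eqref{01930-1ifjHDknfmr22PIKa-lmcwutgf980ihg-RE3} and the polar parametrisation that follows it, gives the same sign as yours, and the sign of~\eqref{D13A5D8-KS-pokme-1} as printed appears to be a slip. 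It is harmless because the auxiliary function $\mathcal{W}$ in the proof of Theorem~\ref{CO:AN:CO} is introduced with a compensating sign on the $\sigma$-term, so the monotonicity and the existence/uniqueness conclusions there are unaffected. Still, in a self-contained write-up you should record the sign you actually obtain rather than assert agreement with the displayed formula.
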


\begin{figure}[h]
\includegraphics[width=0.85\textwidth]{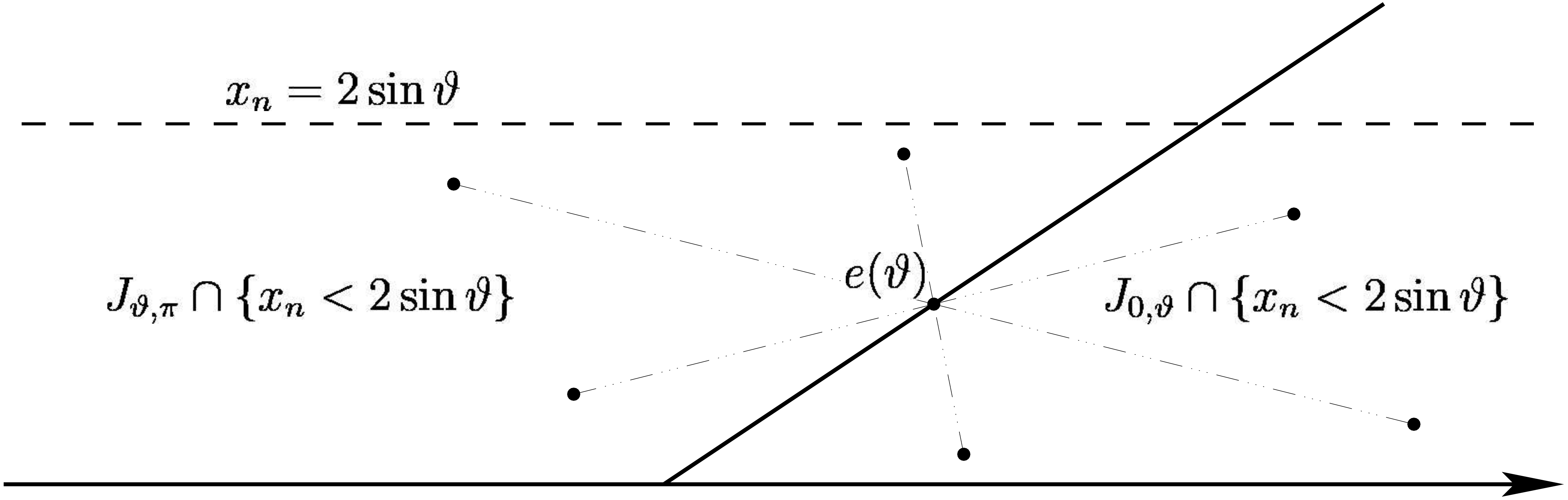}
\caption{A geometric argument involved in the proof of Lemma~\ref{JSIMDESEM}.}
        \label{JSIMDESEM2}
\end{figure}

\begin{proof} We stress that each of the integrals on the left hand side of~\eqref{D13A5D8-KS-pokme-1} is divergent, hence the two terms have to be considered together, in the principal value sense.
However, for typographical convenience, we will formally
act on the integrals by omitting the principal value notation and
perform the cancellations necessary to have only finite contributions to obtain the desired result.

To this end, we recall~\eqref{Jtheta1theta2}
and observe that~$x\in J_{0,\vartheta}\cap\{ x_n<2\sin\vartheta\}$ if and only if~$z:=2e(\vartheta)-x\in
J_{\vartheta,\pi}\cap\{ x_n<2\sin\vartheta\}$, see Figure~\ref{JSIMDESEM2}. Hence,
by the symmetry of~$a_1$,
\begin{eqnarray*}
&&\int_{J_{0,\vartheta}\cap\{ x_n<2\sin\vartheta\}}\frac{a_1(\overrightarrow{x-e(\vartheta)})}{|x-e(\vartheta)|^{n+s_1}}\, dx
=\int_{J_{\vartheta,\pi}\cap\{ z_n<2\sin\vartheta\}}\frac{a_1(\overrightarrow{z-e(\vartheta)})}{|z-e(\vartheta)|^{n+s_1}}\, dz.
\end{eqnarray*}
Consequently, if we denote by~$\Upsilon$ the left hand side of~\eqref{D13A5D8-KS-pokme-1}, we see after a cancellation that
\begin{equation}\label{01930-1ifjHDknfmr22PIKa-lmcwutgf980ihg}
\begin{split}
\Upsilon=
\int_{J_{\vartheta,\pi}\cap\{ x_n>2\sin\vartheta\}}\frac{a_1(\overrightarrow{x-e(\vartheta)})}{|x-e(\vartheta)|^{n+s_1}}\, dx-
\int_{J_{0,\vartheta}\cap\{ x_n>2\sin\vartheta\}}\frac{a_1(\overrightarrow{x-e(\vartheta)})}{|x-e(\vartheta)|^{n+s_1}}\, dx.
\end{split}
\end{equation}
It is useful now to reduce the problem to that in dimension~$2$. To this end, we adopt the notation in~\eqref{Jtheta1theta2-Di2} and~\eqref{Jtheta1theta2-Di3} and note that
\begin{equation}\label{KSM X-10oqlrnfXcsw}
\begin{split}
& \int_{J_{\vartheta,\pi}\cap\{ x_n>2\sin\vartheta\}}\frac{a_1(\overrightarrow{x-e(\vartheta)})}{|x-e(\vartheta)|^{n+s_1}}\, dx \\=&
\iiint\limits_{\{(x_1,x_n)\in J_{\vartheta,\pi}^\star,\;\bar{x}\in\R^{n-2},\; x_n>2\sin\vartheta\}}\frac{a_1\Big(\overrightarrow{
(x_1-\cos\vartheta)e_1+(x_n-\sin\vartheta)e_n+(0,\bar{x},0)}\Big)}{\Big(
(x_1-\cos\vartheta)^2+(x_n-\sin\vartheta)^2+|\bar{x}|^2\Big)^{\frac{n+s_1}2}
}\,d\bar{x}\, dx_1\,dx_n\\=&
\iint\limits_{\{y=(y_1,y_2)\in J_{\vartheta,\pi}^\star,\;\bar{y}\in\R^{n-2},\; y_2>2\sin\vartheta\}}\frac{a_1\Big(\overrightarrow{
(y_1-\cos\vartheta)\, e_1+
(y_2-\sin\vartheta)\,e_n+|y-e^\star(\vartheta)|(0,\bar{y},0)}\Big)}{
|y-e^\star(\vartheta)|^{{2+s_1}}\;
\big(1+|\bar{y}|^2\big)^{\frac{n+s_1}2}
}\,d\bar{y}\, dy
\\=&
\int_{J_{\vartheta,\pi}^\star\cap\{ y_2>2\sin\vartheta\}}\frac{a_1^\star(\overrightarrow{
y-e^\star(\vartheta)})}{
|y-e^\star(\vartheta)|^{{2+s_1}}}\,dy.
\end{split}\end{equation}
Similarly,
$$ \int_{J_{0,\vartheta}\cap\{ x_n>2\sin\vartheta\}}\frac{a_1(\overrightarrow{x-e(\vartheta)})}{|x-e(\vartheta)|^{n+s_1}}\, dx=
\int_{J_{0,\vartheta}^\star\cap\{ y_2>2\sin\vartheta\}}\frac{a_1^\star(\overrightarrow{
y-e^\star(\vartheta)})}{
|y-e^\star(\vartheta)|^{{2+s_1}}}\,dy.$$
Thanks to these observations, we rewrite~\eqref{01930-1ifjHDknfmr22PIKa-lmcwutgf980ihg} in the form
\begin{equation}\label{01930-1ifjHDknfmr22PIKa-lmcwutgf980ihg-RE3}
\begin{split}
\Upsilon=
\int_{J_{\vartheta,\pi}^\star\cap\{ x_2>2\sin\vartheta\}}\frac{a_1^\star(\overrightarrow{x-e^\star(\vartheta)})}{|x-e^\star(\vartheta)|^{2+s_1}}\, dx-
\int_{J_{0,\vartheta}^\star\cap\{ x_2>2\sin\vartheta\}}\frac{a_1^\star(\overrightarrow{x-e^\star(\vartheta)})}{|x-e^\star(\vartheta)|^{2+s_1}}\, dx.
\end{split}
\end{equation}

Now we use polar coordinates centered at~$e^\star(\vartheta)$.
For this, if~$x\in J_{0,\vartheta}^\star\cap\{ x_2>2\sin\vartheta\}$, we write~$x=(\cos\vartheta,\sin\vartheta)+\rho(\cos\alpha,\sin\alpha)$ with~$\alpha\in(0,\vartheta)$ and~$\rho>\frac{\sin\vartheta}{\sin\alpha}$.
Similarly, if~$x\in J_{\vartheta,\pi}^\star\cap\{ x_2>2\sin\vartheta\}$, we write~$x=(\cos\vartheta,\sin\vartheta)+\rho(\cos\beta,\sin\beta)$ with~$\beta\in(\vartheta,\pi)$ and~$\rho>\frac{\sin\vartheta}{\sin\beta}$, see Figure~\ref{JSIMDESEM2.2}.

\begin{figure}[h]
\includegraphics[width=0.85\textwidth]{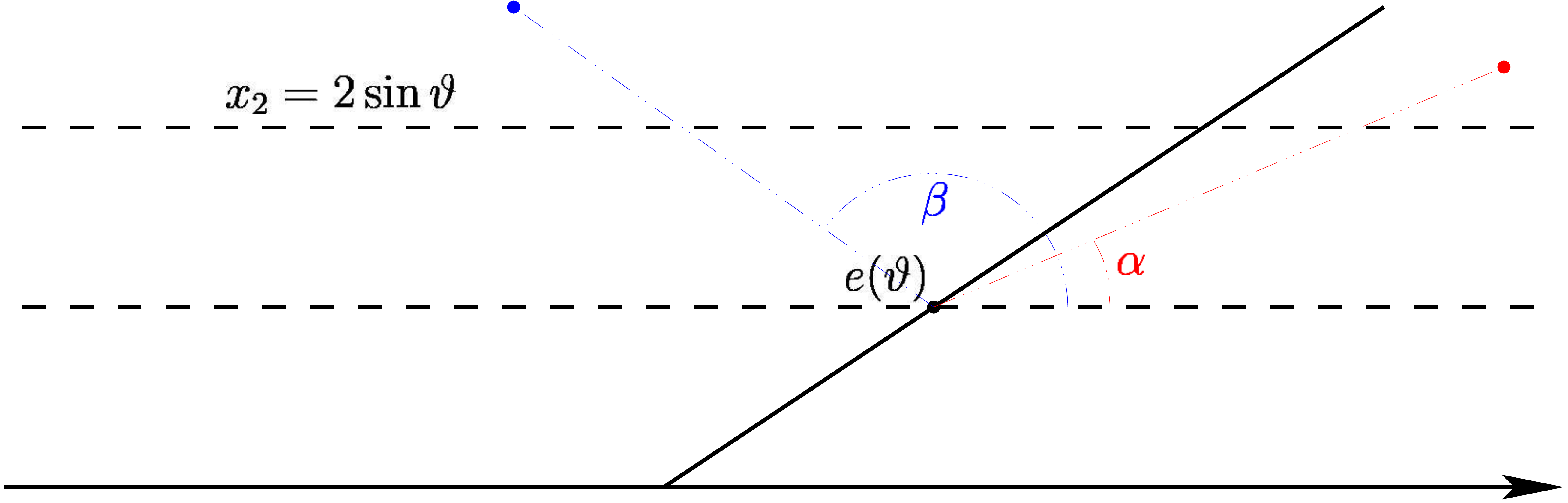}
\caption{Another geometric argument involved in the proof of Lemma~\ref{JSIMDESEM}.}
        \label{JSIMDESEM2.2}
\end{figure}

As a result, using the notation in~\eqref{Jtheta1theta2-Di4}, we deduce
from~\eqref{01930-1ifjHDknfmr22PIKa-lmcwutgf980ihg-RE3} that
\begin{equation*}
\begin{split}
\Upsilon&=
\iint_{(0,\vartheta)\times\left(\frac{\sin\vartheta}{\sin\alpha},+\infty\right)}\frac{\phi_1(\alpha)}{\rho^{1+s_1}}\, d\alpha\,d\rho-
\iint_{(\vartheta,\pi)\times\left(\frac{\sin\vartheta}{\sin\beta},+\infty\right)}\frac{\phi_1(\beta)}{\rho^{1+s_1}}\, d\beta\,d\rho\\&=
\frac{1}{s_1 (\sin\vartheta)^{s_1}}\left(
\int_{0}^\vartheta\phi_1(\alpha)\,(\sin\alpha)^{s_1}\,d\alpha
-\int_{\vartheta}^\pi \phi_1(\beta)\,(\sin\beta)^{s_1}\,d\beta\right),
\end{split}
\end{equation*}
which establishes~\eqref{D13A5D8-KS-pokme-1}.
\end{proof}

\begin{lem}\label{JSIMDESEMa2}
Let the notation in~\eqref{Jtheta1theta2},
\eqref{Jtheta1theta2-Di2}, \eqref{Jtheta1theta2-Di3} and~\eqref{Jtheta1theta2-Di4} hold true.
Then,
\begin{equation}\label{KSM X-10oqlrnfXcsw-2}
\int_{H^c}\frac{a_2(\overrightarrow{e(\vartheta)-x})}{|e(\vartheta)-x|^{n+s_1}}\,dx=
\frac{1}{s_1(\sin\vartheta)^{s_1}}\int_{-\pi}^0
\phi_2(\alpha)\,|\sin\alpha|^{s_1}\,d\alpha.\end{equation}
\end{lem}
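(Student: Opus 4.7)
The plan is to mimic the dimensional-reduction-plus-polar-coordinates scheme used in the proof of Lemma~\ref{JSIMDESEM}, but with the domain $H^c$ in place of the two cones $J_{0,\vartheta}$ and $J_{\vartheta,\pi}$. Since $\vartheta\in(0,\pi)$ the singularity $x=e(\vartheta)$ lies strictly inside $H$, so the integrand in \eqref{KSM X-10oqlrnfXcsw-2} is absolutely integrable and no principal value gymnastics are required; this is the one simplification with respect to the proof of Lemma~\ref{JSIMDESEM}.

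\textbf{Step 1: reduction to the plane.} I would decompose $x=(x_1,\bar x,x_n)\in\R\times\R^{n-2}\times\R$, set $y=(x_1,x_n)$, and observe that $H^c=\{y_2\le0\}$. Writing $z:=e^\star(\vartheta)-y$ and making the change of variable $\bar y:=-\bar x/|z|$ (so that $d\bar x=|z|^{n-2}\,d\bar y$), one has $|e(\vartheta)-x|^2=|z|^2(1+|\bar y|^2)$ and
\[
\overrightarrow{e(\vartheta)-x}=\overrightarrow{z_1 e_1+z_2 e_n+|z|(0,\bar y,0)}.
\]
Then, exactly in parallel with \eqref{KSM X-10oqlrnfXcsw}, the inner integration over $\bar y$ reproduces the definition of $a_2^\star$ from \eqref{Jtheta1theta2-Di3}, giving
\[
\int_{H^c}\frac{a_2(\overrightarrow{e(\vartheta)-x})}{|e(\vartheta)-x|^{n+s_1}}\,dx=\int_{\{y_2\le0\}}\frac{a_2^\star(\overrightarrow{e^\star(\vartheta)-y})}{|e^\star(\vartheta)-y|^{2+s_1}}\,dy.
\]
(When $n=2$ this step is vacuous and one works directly with $a_2$.)

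\textbf{Step 2: polar coordinates around $e^\star(\vartheta)$.} I would now parametrize $y=e^\star(\vartheta)+\rho(\cos\alpha,\sin\alpha)$ with $\rho>0$ and $\alpha\in[-\pi,\pi)$, so that $dy=\rho\,d\rho\,d\alpha$, $|e^\star(\vartheta)-y|=\rho$ and $\overrightarrow{e^\star(\vartheta)-y}=-(\cos\alpha,\sin\alpha)$. The constraint $y_2\le0$ reads $\sin\vartheta+\rho\sin\alpha\le0$; since $\sin\vartheta>0$, this forces $\sin\alpha<0$, i.e.\ $\alpha\in(-\pi,0)$, together with $\rho\ge\sin\vartheta/|\sin\alpha|$. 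Using the symmetry \eqref{AIPA2}, $a_2^\star(-(\cos\alpha,\sin\alpha))=a_2^\star(\cos\alpha,\sin\alpha)=\phi_2(\alpha)$ by \eqref{Jtheta1theta2-Di4}.

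\textbf{Step 3: explicit computation.} Combining Steps~1 and~2 and performing the radial integration,
\begin{align*}
\int_{H^c}\frac{a_2(\overrightarrow{e(\vartheta)-x})}{|e(\vartheta)-x|^{n+s_1}}\,dx
&=\int_{-\pi}^{0}\phi_2(\alpha)\int_{\sin\vartheta/|\sin\alpha|}^{+\infty}\frac{d\rho}{\rho^{1+s_1}}\,d\alpha\\
&=\frac{1}{s_1(\sin\vartheta)^{s_1}}\int_{-\pi}^{0}\phi_2(\alpha)\,|\sin\alpha|^{s_1}\,d\alpha,
\end{align*}
which is exactly \eqref{KSM X-10oqlrnfXcsw-2}. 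There is no real obstacle; the only point to be careful about is keeping track of the sign of $\sin\alpha$ when converting the half-plane condition $y_2\le0$ into bounds on $\rho$, and verifying that the exponent $n+s_1$ in the original integral is inherited as $2+s_1$ after the reduction to the plane (which is why $(\sin\vartheta)^{s_1}$ and $|\sin\alpha|^{s_1}$, rather than $s_2$-powers, appear on the right-hand side).
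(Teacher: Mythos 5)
Your proof is correct and follows essentially the same two-step approach as the paper's: first a dimensional reduction to the plane (re-using the computation from the proof of Lemma~\ref{JSIMDESEM}), then polar coordinates centred at~$e^\star(\vartheta)$ with the half-plane constraint $y_2\le 0$ translated into $\alpha\in(-\pi,0)$ and $\rho\ge\sin\vartheta/|\sin\alpha|$, followed by the elementary radial integration.
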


\begin{figure}[h]
\includegraphics[width=0.85\textwidth]{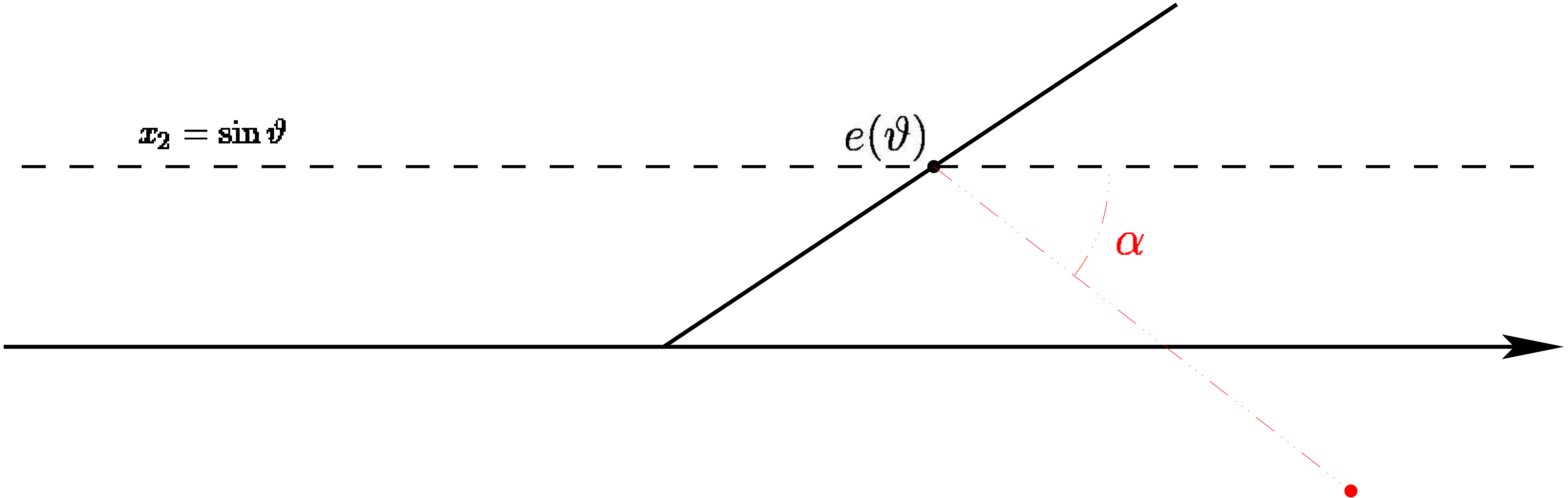}
\caption{A geometric argument involved in the proof of Lemma~\ref{JSIMDESEMa2}.}
        \label{JSIMDESEM2a2}
\end{figure}

\begin{proof} As in~\eqref{KSM X-10oqlrnfXcsw}, we have that the left hand side of~\eqref{KSM X-10oqlrnfXcsw-2} equals to
$$ \Lambda:=
\int_{\R\times(-\infty,0)}\frac{a_2^\star(\overrightarrow{
y-e^\star(\vartheta)})}{
|y-e^\star(\vartheta)|^{{2+s_1}}}\,dy.$$
Now we use polar coordinates centered at~$e^\star(\vartheta)$
by considering~$y=(\cos\vartheta,\sin\vartheta)+\rho(\cos\alpha,\sin\alpha)$ with~$\alpha\in(-\pi,0)$ and~$\rho>\frac{\sin\vartheta}{|\sin\alpha|}$,
see Figure~\ref{JSIMDESEM2a2}. In this way, and recalling~\eqref{Jtheta1theta2-Di4},
it follows that
\begin{eqnarray*} \Lambda=
\iint_{(-\pi,0)\times\left( \frac{\sin\vartheta}{|\sin\alpha|},+\infty\right)}\frac{\phi_2(\alpha)}{
\rho^{{1+s_1}}}\,d\alpha\,d\rho=\frac{1}{s_1(\sin\vartheta)^{s_1}}\int_{-\pi}^0
\phi_2(\alpha)\,|\sin\alpha|^{s_1}\,d\alpha,
\end{eqnarray*}
as desired.
\end{proof}

With this, we can uniquely determine the contact angle, as presented in
Theorem~\ref{CO:AN:CO}:

\begin{proof}[Proof of Theorem~\ref{CO:AN:CO}]
We let
$$ {\mathcal{W}}(\vartheta):=s_1 (\sin\vartheta)^{s_1}\left(
\int_{J_{\vartheta,\pi}}\frac{a_1(\overrightarrow{e(\vartheta)-x})}{|e(\vartheta)-x|^{n+s_1}}\, dx
-\int_{J_{0,\vartheta}}\frac{a_1(\overrightarrow{e(\vartheta)-x})}{|e(\vartheta)-x|^{n+s_1}}\, dx
-\sigma  \int_{H^c}\frac{a_2(\overrightarrow{e(\vartheta)-x})}{|e(\vartheta)-x|^{n+s_1}}\,dx\right)
$$
and we observe that solutions of~\eqref{sigmagenerale-0} correspond to zeros of~${\mathcal{W}}$
in~$[0,\pi]$.

Also, by Lemmata~\ref{JSIMDESEM} and~\ref{JSIMDESEMa2},
and recalling~\eqref{AIPA2},
\begin{equation}\label{eq:W:CO:W:W} \begin{split}
{\mathcal{W}}(\vartheta)\,&=
\int_{0}^\vartheta\phi_1(\alpha)\,(\sin\alpha)^{s_1}\,d\alpha
-\int_{\vartheta}^\pi \phi_1(\alpha)\,(\sin\alpha)^{s_1}\,d\alpha
-\sigma\int_{-\pi}^0
\phi_2(\alpha)\,|\sin\alpha|^{s_1}\,d\alpha\\&=
\int_{0}^\vartheta\phi_1(\alpha)\,(\sin\alpha)^{s_1}\,d\alpha
-\int_{\vartheta}^\pi \phi_1(\alpha)\,(\sin\alpha)^{s_1}\,d\alpha
-\sigma\int_{-\pi}^0
\phi_2(\pi+\alpha)\,(\sin(\pi+\alpha))^{s_1}\,d\alpha\\&=
\int_{0}^\vartheta\phi_1(\alpha)\,(\sin\alpha)^{s_1}\,d\alpha
-\int_{\vartheta}^\pi \phi_1(\alpha)\,(\sin\alpha)^{s_1}\,d\alpha
-\sigma\int_0^{\pi}\phi_2(\alpha)\,(\sin \alpha)^{s_1}\,d\alpha.
\end{split}\end{equation}
In particular, ${\mathcal{W}}$ is continuous in~$[0,\pi]$,
differentiable in~$(0,\pi)$ and,
for each~$\vartheta\in(0,\pi)$,
$$ {\mathcal{W}}'(\vartheta)=
2\,\phi_1(\vartheta)\,(\sin\vartheta)^{s_1}>0,$$
which shows that~${\mathcal{W}}$ admits at most one zero in~$(0,\pi)$.
This establishes the uniqueness result stated in Theorem~\ref{CO:AN:CO}.

Now we show the existence result claimed in Theorem~\ref{CO:AN:CO} under assumption~\eqref{0oujfn-29roh-32eirj-9034o5t-PK}. To this end, it suffices to notice that,
by~\eqref{0oujfn-29roh-32eirj-9034o5t-PK} and~\eqref{eq:W:CO:W:W}, we have that
$${\mathcal{W}}(0)=
-\int_{0}^\pi\phi_1(\alpha)\,(\sin\alpha)^{s_1}\,d\alpha
-\sigma\int_0^{\pi}\phi_2(\alpha)\,(\sin \alpha)^{s_1}\,d\alpha
<0$$
and
$${\mathcal{W}}(\pi)=
\int_{0}^\pi\phi_1(\alpha)\,(\sin\alpha)^{s_1}\,d\alpha
-\sigma\int_0^{\pi}\phi_2(\alpha)\,(\sin \alpha)^{s_1}\,d\alpha
>0.$$
{F}rom this and the continuity of~${\mathcal{W}}$, we obtain the existence of a zero of~${\mathcal{W}}$
in~$(0,\pi)$.
\end{proof}

\begin{rem}\label{REM:barvartheta} {\rm
We stress that the strict positivity of the kernel is essential for the uniqueness result in Theorem~\ref{CO:AN:CO}:
indeed, if one allows degenerate kernels in which~$a_1$ is only nonnegative, such a uniqueness claim
can be violated. As an example, consider~$\sigma:=0$ and pick~$\vartheta_0\in\left(0,\frac\pi2\right)$.
Let~$\phi_1\in C^\infty(\R)$ be such that~$\phi_1(\alpha):=0$ for all~$\alpha\in [\vartheta_0,\pi-\vartheta_0]$. Assume also that~$\phi_1\left(\frac\pi2+\alpha\right)=\phi_1\left(\frac\pi2-\alpha\right)$ for all~$\alpha\in\left(0,\frac\pi2\right)$ and that~$\phi_1(\alpha+\pi)=\phi_1(\alpha)$ for all~$\alpha\in(0,\pi)$.
See e.g. Figure~\ref{FIGURA3MUESE} for a sketch of this function.

\begin{figure}[h]
\includegraphics[width=0.75\textwidth]{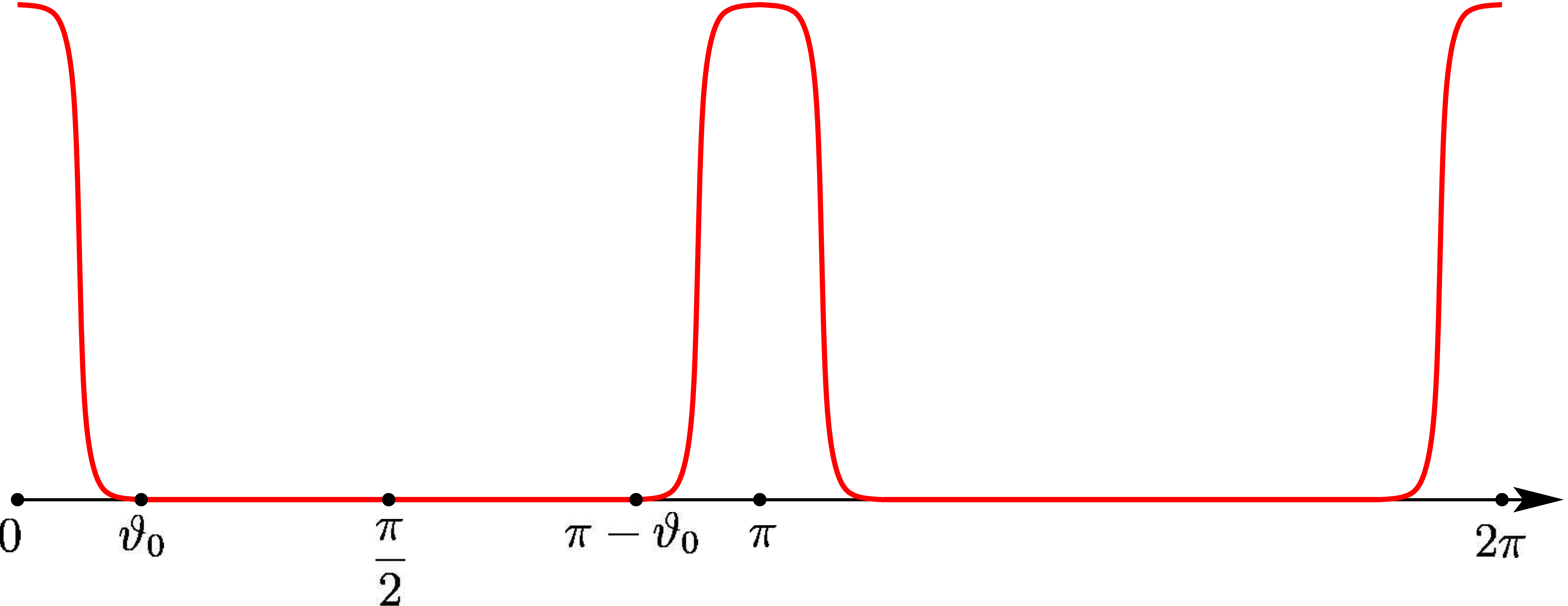}
\caption{A degenerate example of~$\phi_1$ leading to a multiplicity of the contact angle in~\eqref{sjwicru348bt4v8b4576848poiuytrewqasdfghjkmnbvs}.}
        \label{FIGURA3MUESE}
\end{figure}

Then, by~\eqref{eq:W:CO:W:W}, for every~$\bar\vartheta\in\left[\vartheta_0,\frac\pi2\right]$,
\begin{eqnarray*}
{\mathcal{W}}(\bar\vartheta)&=&
\int_{0}^{\bar\vartheta}\phi_1(\alpha)\,(\sin\alpha)^{s_1}\,d\alpha
-\int_{\bar\vartheta}^\pi \phi_1(\alpha)\,(\sin\alpha)^{s_1}\,d\alpha\\&=&
\int_{0}^{\vartheta_0}\phi_1(\alpha)\,(\sin\alpha)^{s_1}\,d\alpha
-\int_{\pi-\vartheta_0}^\pi \phi_1(\alpha)\,(\sin\alpha)^{s_1}\,d\alpha\\&=&
\int_{0}^{\vartheta_0}\phi_1(\alpha)\,(\sin\alpha)^{s_1}\,d\alpha
-\int_0^{\vartheta_0} \phi_1(\pi-\beta)\,(\sin(\pi-\beta))^{s_1}\,d\beta\\&=&
\int_{0}^{\vartheta_0}\phi_1(\alpha)\,(\sin\alpha)^{s_1}\,d\alpha
-\int_0^{\vartheta_0} \phi_1\left(\frac\pi2+\frac\pi2-\beta\right)\,(\sin \beta)^{s_1}\,d\beta\\&=&
\int_{0}^{\vartheta_0}\phi_1(\beta)\,(\sin\beta)^{s_1}\,d\beta
-\int_0^{\vartheta_0} \phi_1\left(\frac\pi2-\left(\frac\pi2-\beta\right)\right)\,(\sin \beta)^{s_1}\,d\beta\\&=&0,
\end{eqnarray*}
which shows that in this degenerate case every angle~$\bar\vartheta\in\left[\vartheta_0,\frac\pi2\right]$ would
be a zero of~${\mathcal{W}}$, hence
a solution of the contact angle equation in~\eqref{sigmagenerale-0}. Accordingly, the assumption of strict
positivity of the kernel cannot be dropped in
Theorem~\ref{CO:AN:CO}.}\end{rem}

\begin{appendix}

\section{Existence of minimizers and proof of Proposition~\ref{existenceofminimizer}}\label{APPENDI-A}

The proof of the existence result in Proposition~\ref{existenceofminimizer}
is based on a semicontinuity argument and on a direct minimization procedure.
We first provide the following lower semicontinuity lemma.

\begin{lem}[Semicontinuity of the energy]\label{lowersemic}  If $I_2(\Omega,\Omega^c)<+\infty$,
$E_j\subseteq \Omega$ and~$E_j\rightarrow E$ in $L^1(\Omega)$, then 
\[
\liminf_{j\rightarrow +\infty}\mathcal{E}(E_j) \geq \mathcal{E}(E).
\]
\end{lem}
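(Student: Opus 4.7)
The plan is to split~$\mathcal{E}(E_j)=I_1(E_j,E_j^c\cap\Omega)+\sigma I_2(E_j,\Omega^c)$ into its two pieces and treat each one separately: I will show that the first piece is lower semicontinuous by a Fatou-type argument, while the second piece is in fact \emph{continuous}, so that the sign of~$\sigma$ becomes irrelevant. Since~$E_j\to E$ in~$L^1(\Omega)$, up to extracting a subsequence (along which the liminf is realized) one may assume~$\chi_{E_j}\to\chi_E$ pointwise a.e.\ in~$\Omega$; this reduction is standard and harmless for establishing the desired inequality.

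For the first term I would write, using~$E_j\subseteq\Omega$,
\[
I_1(E_j,E_j^c\cap\Omega)=\iint_{\Omega\times\Omega}\chi_{E_j}(x)\bigl(1-\chi_{E_j}(y)\bigr)K_1(x-y)\,dx\,dy,
\]
observe that the integrand is nonnegative and converges a.e.\ to~$\chi_E(x)(1-\chi_E(y))K_1(x-y)$, and then apply Fatou's lemma to obtain~$\liminf_j I_1(E_j,E_j^c\cap\Omega)\ge I_1(E,E^c\cap\Omega)$. This works even when the right-hand side is infinite.

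For the second term I would introduce the function
\[
h(x):=\int_{\Omega^c}K_2(x-y)\,dy\qquad\text{for }x\in\Omega,
\]
and note that the hypothesis~$I_2(\Omega,\Omega^c)<+\infty$ is exactly the statement~$h\in L^1(\Omega)$. Since
\[
I_2(E_j,\Omega^c)=\int_\Omega\chi_{E_j}(x)\,h(x)\,dx,
\]
and~$\chi_{E_j}h\to\chi_E h$ a.e.\ with integrable dominating function~$h$, the dominated convergence theorem gives~$I_2(E_j,\Omega^c)\to I_2(E,\Omega^c)$. Multiplying by~$\sigma\in\R$ preserves this convergence. Adding the two contributions then yields~$\liminf_j\mathcal{E}(E_j)\ge\mathcal{E}(E)$.

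I expect no serious obstacle: the only substantive point is recognising that the ``wetting'' term~$\sigma I_2$ must be handled as a continuous functional rather than a semicontinuous one, because~$\sigma$ is allowed to be negative. The integrability assumption on~$I_2(\Omega,\Omega^c)$ is precisely tailored to make this dominated-convergence argument go through; without it, one would only get lower semicontinuity for~$\sigma\ge 0$ and the statement would fail in the hydrophilic regime.
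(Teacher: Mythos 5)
Your proof is correct, and it takes a genuinely different route from the paper's. The paper handles the hydrophilic case~$\sigma<0$ by an algebraic reshuffle: it writes
\[
\mathcal{E}(E)=I_{1}(E,E^c\cap \Omega)+I_{2}(E, \Omega^c)
+(|\sigma|+1)I_{2}(E^c\cap\Omega, \Omega^c)-(|\sigma|+1)I_2(\Omega,\Omega^c),
\]
so that the first three terms are nonnegative, Fatou's lemma applies to each of them simultaneously, and the last term is a constant that cancels when comparing~$E_j$ with~$E$. Your approach instead isolates a sharper structural fact: under the standing hypothesis~$I_2(\Omega,\Omega^c)<+\infty$, the function~$h(x):=\int_{\Omega^c}K_2(x-y)\,dy$ lies in~$L^1(\Omega)$, and therefore~$F\mapsto I_2(F,\Omega^c)=\int_\Omega\chi_F\,h$ is \emph{continuous} under~$L^1(\Omega)$ convergence, not merely lower semicontinuous; so the sign of~$\sigma$ plays no role for that piece, and only the~$I_1$ term needs Fatou. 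Both arguments are correct and use exactly the same hypotheses; yours is arguably more transparent about where the finiteness of~$I_2(\Omega,\Omega^c)$ is genuinely used, and it yields the slightly stronger information that~$\sigma I_2(E_j,\Omega^c)\to\sigma I_2(E,\Omega^c)$ along the full sequence rather than a one-sided inequality. A minor remark: for the~$I_2$ term the passage to a pointwise a.e.\ convergent subsequence is harmless but unnecessary, since~$\|\chi_{E_j}-\chi_E\|_{L^1(\Omega)}\to 0$ already implies~$\int_\Omega|\chi_{E_j}-\chi_E|\,|h|\to 0$ by the dominated convergence theorem for convergence in measure, using~$2|h|\in L^1(\Omega)$ as the dominating function.
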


\begin{proof}
If $\sigma\geq 0$, the proof follows by Fatou's Lemma. 
If instead~$\sigma<0$, then we observe that 
\[
I_2(\Omega,\Omega^c)=I_2(E,\Omega^c)+I_2(E^c\cap\Omega,\Omega^c),
\]
and therefore, using that $\sigma=-|\sigma|$,
we can write 
\begin{equation*}
\begin{split}
\mathcal{E}(E)&=I_{1}(E,E^c\cap \Omega)-|\sigma|I_{2}(E, \Omega^c)+(|\sigma|+1)I_2(\Omega,\Omega^c)
-(|\sigma|+1)I_2(\Omega,\Omega^c)
\\
&=I_{1}(E,E^c\cap \Omega)+I_{2}(E, \Omega^c)
+(|\sigma|+1)I_{2}(E^c\cap\Omega, \Omega^c)-(|\sigma|+1)I_2(\Omega,\Omega^c).
\end{split}
\end{equation*}
As a consequence, we can exploit Fatou's Lemma and obtain the desired result.
\end{proof}

With this we are able to prove Proposition~\ref{existenceofminimizer}:

\begin{proof}[Proof of Proposition~\ref{existenceofminimizer}]
We observe that, if~$K_1\in \mathbf{K}(n,s_1,\lambda,\varrho)$, then, for any $p\in\mathbb{R}^n$,
\begin{equation}\label{estimatefrombelow}
I_1(F,F^c)\geq \frac{1}{\lambda}I_{s_1}(F\cap B_{\varrho/2}(p),F^c\cap B_{\varrho/2}(p)), \qquad
{\mbox{for every }}F\subseteq \mathbb{R}^n.
\end{equation}
To prove it, we notice that if $x,y\in B_{\varrho/2}(p)$, then $|x-y|\leq |x-p|+|p-y|<\varrho$,
and therefore, recalling~\eqref{stimakernel},
\begin{eqnarray*}
I_1(F,F^c)&\geq& \int_{F\cap B_{\varrho/2}(p)}\int_{F^c\cap B_{\varrho/2}(p)}K_1(x-y)\, dx\, dy\\
&
\geq& \frac{1}{\lambda}\int_{F\cap B_{\varrho/2}(p)}\int_{F^c\cap B_{\varrho/2}(p)}\frac{dx\, dy}{|x-y|^{n+s_1}},
\end{eqnarray*}
which establishes~\eqref{estimatefrombelow}.

Now, if $H$ is a half-space such that $|H\cap \Omega|=m$ and $R>0$ is such that $\Omega\subseteq B_R$, then, using~\eqref{stimakernel}, we see that
$$ I_1(H\cap B_R, H^c\cap B_R)=C\,R^{n-s_1},$$
for some~$C>0$ depending only on~$n$ and~$s_1$,
and therefore
\begin{eqnarray*}
\mathcal{E}(H\cap\Omega)&=&I_1(H\cap \Omega, (H\cap \Omega)^c\cap\Omega)+\sigma I_2(H\cap \Omega, \Omega^c)\\&
=&I_1(H\cap \Omega, H^c\cap\Omega)+\sigma I_2(H\cap \Omega, \Omega^c)\\&
\leq& I_1(H\cap B_R, H^c\cap B_R)+|\sigma|\,I_2(\Omega,\Omega^c)\\&<&+\infty.
\end{eqnarray*}
As a consequence, we find that
$$\gamma:=\inf \left\{ {\mathcal{C}}(E)\; :\; E\subseteq\Omega,\; |E|=m\right\}
<+\infty.$$
Let now~$E_j\subseteq \Omega$ be such that $|E_j|=m$ and~${\mathcal{C}}(E_j)=\mathcal{E}(E_j)+\int_{E_j}g\rightarrow \gamma$ as~$j\to+\infty$. Then, if $j$ is large enough, we have that 
\[
\gamma+1+\int_{\Omega}|g|\geq {\mathcal{E}}(E_j)=I_1(E_j,E_j^c\cap\Omega)+\sigma I_2(E_j,\Omega^c)\geq I_1(E_j,E_j^c\cap\Omega)- |\sigma| \,I_2(\Omega,\Omega^c).
\]
Consequently
\[
I_1(E_j,E_j^c)=I_1(E_j,E_j^c\cap\Omega)+I_1(E_j,E_j^c\cap\Omega^c)\leq \gamma+1+\int_{\Omega}|g|+
I_1(\Omega,\Omega^c)+
|\sigma|I_2(\Omega,\Omega^c).
\]
Since $E_j\subseteq B_R$, using~\eqref{estimatefrombelow} and the fact that the space $W^{s_1,2}(B_R)$ is compactly embedded in~$L^1(B_R)$, we find that, up to
a subsequence, $E_j\rightarrow E$ in $L^1_{\mathrm{loc}}(\mathbb{R}^n)$ for some~$E\subseteq \Omega$ with~$|E|=m$. Hence, using the semicontinuity property in Lemma~\ref{lowersemic},
we conclude that $E$ is a minimizer.

We also remark that 
\begin{equation*}
I_2(E,\Omega^c)\leq I_2(\Omega,\Omega^c)<+\infty,
\end{equation*}
and therefore, since~$\mathcal{E}(E)<+\infty$, we conclude that 
\[
I_1(E,E^c\cap \Omega)<+\infty,
\]
as desired. 
\end{proof}

\end{appendix}

\bibliography{biblio}

\begin{bibdiv}\begin{biblist}

\bib{MR2675483}{article}{
   author={Caffarelli, L.},
   author={Roquejoffre, J.-M.},
   author={Savin, O.},
   title={Nonlocal minimal surfaces},
   journal={Comm. Pure Appl. Math.},
   volume={63},
   date={2010},
   number={9},
   pages={1111--1144},
   issn={0010-3640},
   review={\MR{2675483}},
   doi={10.1002/cpa.20331},
}

\bib{MR3981295}{article}{
   author={Cinti, Eleonora},
   author={Serra, Joaquim},
   author={Valdinoci, Enrico},
   title={Quantitative flatness results and $BV$-estimates for stable
   nonlocal minimal surfaces},
   journal={J. Differential Geom.},
   volume={112},
   date={2019},
   number={3},
   pages={447--504},
   issn={0022-040X},
   review={\MR{3981295}},
   doi={10.4310/jdg/1563242471},
}

\bib{MR3393254}{article}{
   author={Cozzi, Matteo},
   title={On the variation of the fractional mean curvature under the effect
   of $C^{1,\alpha}$ perturbations},
   journal={Discrete Contin. Dyn. Syst.},
   volume={35},
   date={2015},
   number={12},
   pages={5769--5786},
   issn={1078-0947},
   review={\MR{3393254}},
   doi={10.3934/dcds.2015.35.5769},
}

\bib{MR3317808}{article}{
   author={De Philippis, G.},
   author={Maggi, F.},
   title={Regularity of free boundaries in anisotropic capillarity problems
   and the validity of Young's law},
   journal={Arch. Ration. Mech. Anal.},
   volume={216},
   date={2015},
   number={2},
   pages={473--568},
   issn={0003-9527},
   review={\MR{3317808}},
   doi={10.1007/s00205-014-0813-2},
}

\bib{MR3707346}{article}{
   author={Dipierro, Serena},
   author={Maggi, Francesco},
   author={Valdinoci, Enrico},
   title={Asymptotic expansions of the contact angle in nonlocal capillarity
   problems},
   journal={J. Nonlinear Sci.},
   volume={27},
   date={2017},
   number={5},
   pages={1531--1550},
   issn={0938-8974},
   review={\MR{3707346}},
   doi={10.1007/s00332-017-9378-1},
}

\bib{REV}{article}{
   author={Dipierro, Serena},
   author={Maggi, Francesco},
   author={Valdinoci, Enrico},
   title={Minimizing cones for fractional capillarity problems},
   journal={Rev. Mat. Iberoam.},
   doi={10.4171/RMI/1289},
}

\bib{MR3596708}{article}{
   author={Dipierro, Serena},
   author={Savin, Ovidiu},
   author={Valdinoci, Enrico},
   title={Boundary behavior of nonlocal minimal surfaces},
   journal={J. Funct. Anal.},
   volume={272},
   date={2017},
   number={5},
   pages={1791--1851},
   issn={0022-1236},
   review={\MR{3596708}},
   doi={10.1016/j.jfa.2016.11.016},
}

\bib{MR4104542}{article}{
   author={Dipierro, Serena},
   author={Savin, Ovidiu},
   author={Valdinoci, Enrico},
   title={Nonlocal minimal graphs in the plane are generically sticky},
   journal={Comm. Math. Phys.},
   volume={376},
   date={2020},
   number={3},
   pages={2005--2063},
   issn={0010-3616},
   review={\MR{4104542}},
   doi={10.1007/s00220-020-03771-8},
}

\bib{DUSSANV1983303}{article}{
   author={Dussan V., E. B.},
   title={The moving contact line},
   conference={
      title={Waves on Fluid Interfaces},
   },
   book={
      publisher={Academic Press, Cambridge},
   },
   date={1983},
   pages={303--324},
isbn = {978-0-12-493220-3},
   doi={10.1016/B978-0-12-493220-3.50018-0},
}

\bib{MR3322379}{article}{
   author={Figalli, A.},
   author={Fusco, N.},
   author={Maggi, F.},
   author={Millot, V.},
   author={Morini, M.},
   title={Isoperimetry and stability properties of balls with respect to
   nonlocal energies},
   journal={Comm. Math. Phys.},
   volume={336},
   date={2015},
   number={1},
   pages={441--507},
   issn={0010-3616},
   review={\MR{3322379}},
   doi={10.1007/s00220-014-2244-1},
}

\bib{MR2469027}{article}{
   author={Frank, Rupert L.},
   author={Seiringer, Robert},
   title={Non-linear ground state representations and sharp Hardy
   inequalities},
   journal={J. Funct. Anal.},
   volume={255},
   date={2008},
   number={12},
   pages={3407--3430},
   issn={0022-1236},
   review={\MR{2469027}},
   doi={10.1016/j.jfa.2008.05.015},
}
	
\bib{MR3918991}{article}{
   author={de Gennes, P.-G.},
   title={Wetting: statics and dynamics},
   journal={Rev. Modern Phys.},
   volume={57},
   date={1985},
   number={3},
   pages={827--863},
   issn={0034-6861},
   review={\MR{3918991}},
   doi={10.1103/RevModPhys.57.827},
}

\bib{ASDfMRDGB}{book}{
   author={de Gennes, Pierre-Gilles},
   author={Brochard-Wyart, Francoise},
   author={Quere, David},
   title={Capillarity and Wetting Phenomena. Drops, Bubbles, Pearls, Waves},
   publisher={Springer-Verlag, New York},
   date={2004},
   pages={xvi+292},
   isbn={978-0-387-00592-8},
   isbn={978-1-4419-1833-8},
   doi={10.1007/978-0-387-21656-0},
}

\bib{MR1051327}{article}{
   author={de Gennes, P.-G.},
   author={Hua, X.},
   author={Levinson, P.},
   title={Dynamics of wetting: local contact angles},
   journal={J. Fluid Mech.},
   volume={212},
   date={1990},
   pages={55--63},
   issn={0022-1120},
   review={\MR{1051327}},
   doi={10.1017/S0022112090001859},
}

\bib{MR556259}{article}{
   author={Kafka, Fred Y.},
   author={Dussan V., E. B.},
   title={On the interpretation of dynamic contact angles in capillaries},
   journal={J. Fluid Mech.},
   volume={95},
   date={1979},
   number={3},
   pages={539--565},
   issn={0022-1120},
   review={\MR{556259}},
   doi={10.1017/S0022112079001592},
}

\bib{MR3161386}{article}{
   author={Ludwig, Monika},
   title={Anisotropic fractional perimeters},
   journal={J. Differential Geom.},
   volume={96},
   date={2014},
   number={1},
   pages={77--93},
   issn={0022-040X},
   review={\MR{3161386}},
}

\bib{maggisets}{book}{
  title={Sets of finite perimeter and geometric variational problems: an introduction to Geometric Measure Theory},
  author={Maggi, Francesco},
  number={135},
  year={2012},
  publisher={Cambridge University Press}
}

\bib{MR3717439}{article}{
   author={Maggi, Francesco},
   author={Valdinoci, Enrico},
   title={Capillarity problems with nonlocal surface tension energies},
   journal={Comm. Partial Differential Equations},
   volume={42},
   date={2017},
   number={9},
   pages={1403--1446},
   issn={0360-5302},
   review={\MR{3717439}},
   doi={10.1080/03605302.2017.1358277},
}
		
\end{biblist}\end{bibdiv}

\end{document}